\numberwithin{equation}{section}
\newcommand{\Z}{\mathbb{Z}}
\newcommand{\Q}{\mathbb{Q}}
\newcommand{\Fq}{\mathbb{F}_q}
\newcommand{\Zp}{\Z_p}
\newcommand{\FF}[1]{\mathbb{F}_{#1}}
\renewcommand{\hom}{\operatorname{Hom}}
\newcommand{\ch}{\operatorname{char}}
\newcommand{\rk}{\operatorname{rk}}
\newcommand{\floor}[1]{\left\lfloor #1 \right\rfloor}
\newtheorem{theorem}{Theorem}[section]
\newtheorem{lemma}[theorem]{Lemma}
\newtheorem*{conj}{Conjecture}
\let\oldtheequation\theequation
\renewcommand\tagform@[1]{\maketag@@@{\ignorespaces#1\unskip\@@italiccorr}} 
\renewcommand\theequation{(\oldtheequation)}
\def\equationautorefname~{}
\title{$p$-Divisibility of the number of linear representations of an Abelian $p$-group}
\author{Chen Wang}
\begin{document}

\maketitle

\begin{abstract}
We establish lower bounds for the $p$-divisibility of the quantity\break $\#\hom(G,GL_n(\Fq))$, the number of homomorphisms from $G$ to a general linear group, where $G$ is an Abelian $p$-group. This is in analogy to the result of Krattenthaler and M\"{u}ller \cite{MR3383810} on homomorphisms to symmetric groups.
\end{abstract}

\section{Introduction}
The famous Frobenius Theorem \cite{zbMATH02656074} states the following.
\begin{theorem}[\cite{zbMATH02656074}]\label{thFrobenius}
If $n$ divides the size of a finite group $H$, then the number of elements in $H$ with order dividing $n$ is a multiple of $n$.
\end{theorem}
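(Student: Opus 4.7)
The plan is to prove the stronger assertion that $\gcd(n,|H|)$ divides $N(n):=\#\{x\in H:x^n=1\}$ for every positive integer $n$; this specializes to Frobenius's theorem when $n\mid|H|$. I would proceed by strong induction on $|H|$, with $|H|=1$ as the trivial base case. A convenient preliminary reduction is the observation that $N(n)$ depends on $n$ only through $\gcd(n,\exp H)$, so one may assume $n$ divides $\exp H$ (and in particular $|H|$).

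For the inductive step, the first move is to partition the solutions of $x^n=1$ by the cyclic subgroup they generate, yielding
\[
N(n)=\sum_{d\mid n}c_d(H)\,\varphi(d),
\]
where $c_d(H)$ is the number of cyclic subgroups of $H$ of order $d$ and $\varphi$ is Euler's totient. The identity $\sum_{d\mid n}\varphi(d)=n$ is the combinatorial shadow of the theorem, and the group-theoretic content is to promote it into an actual divisibility. I would then fix a prime $p\mid n$, write $n=p^a m$ with $\gcd(p,m)=1$, and use the commuting $p$-/$p'$-decomposition $x=x_p x_{p'}$ of elements of order dividing $n$ to write $N(n)$ as a sum over $p'$-elements $y$ of centralizer counts $N_{C_H(y)}(p^a)$. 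For each nontrivial $y$ the centralizer is a proper subgroup, so the inductive hypothesis applies, and conjugation organizes these terms into $H$-orbits whose sizes are indices $[H:C_H(y)]$, interacting cleanly with the divisibility.

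The main obstacle I anticipate is closing the induction on the contribution from $y=1$, i.e.\ the count of $p$-elements of $H$ of order dividing $p^a$: this contribution lives inside a Sylow $p$-subgroup of $H$ and is not directly accessible to induction on $|H|$ unless one treats $p$-groups separately. Handling the $p$-group case --- most naturally via an action of $\Z/p^a\Z$ on the solution set of $x^{p^a}=1$ whose orbits have $p$-power sizes --- together with orchestrating the orbit counts of the $p'$-parts so that the prime-to-$p$ factor $m$ of $n$ is also produced, is the delicate bookkeeping at the heart of the argument.
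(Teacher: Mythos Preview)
The paper does not prove Theorem~\ref{thFrobenius}; it merely quotes Frobenius's classical result with a citation. There is therefore no ``paper's proof'' to compare against. That said, your outline has the right architecture (reduce to one prime at a time via the commuting $p$/$p'$-decomposition, then close the prime-power case separately), but two of the steps as written are incorrect and would have to be repaired.

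First, the sentence ``for each nontrivial $y$ the centralizer is a proper subgroup'' is false: any nontrivial central $p'$-element $y$ of order dividing $m$ has $C_H(y)=H$, so induction on $|H|$ does not apply to those terms. This is easily patched---the central $y$'s together contribute $|Z_m|\cdot N_H(p^a)$ with $|Z_m|$ coprime to $p$, so you still reduce to showing $p^a\mid N_H(p^a)$---but it should be said correctly. For the genuinely non-central $y$, your intended argument does work: writing $p^c=\gcd(p^a,|C_H(y)|)$, induction gives $p^c\mid N_{C_H(y)}(p^a)$, while $p^{a-c}\mid [H:C_H(y)]$ since $p^a\mid|H|$, and the conjugacy class contributes $[H:C_H(y)]\cdot N_{C_H(y)}(p^a)$.

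Second, the claim that the $y=1$ contribution ``lives inside a Sylow $p$-subgroup of $H$'' is not right either: the elements of $H$ of order dividing $p^a$ are spread over all Sylow $p$-subgroups, and there is no direct passage from $N_H(p^a)$ to $N_P(p^a)$ for a single Sylow $P$ unless $P\trianglelefteq H$. So ``treat $p$-groups separately'' does not by itself handle this step; you still need an argument, for general $H$ with $p^a\mid|H|$, that $p^a\mid N_H(p^a)$. The hinted ``action of $\Z/p^a\Z$ on the solution set'' is presumably the cyclic-shift action on $\{(x_1,\dots,x_{p^a}):x_1\cdots x_{p^a}=1\}$, but note that the na\"{\i}ve orbit count from this action yields only $p\mid N_H(p^a)$, not $p^a$; extracting the full $p^a$ requires an extra induction on $a$ (or an equivalent refinement), and this is exactly where the real work in an elementary proof of Frobenius's theorem lies.
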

This result can be rephrased as saying that the number of homomorphisms from a cyclic group $C_n$ of order~$n$ to $H$ is divisible by $n$.

More generally, let $G$ and $H$ be two finite groups, and let $\#\hom(G,H)$ be the number of homomorphisms from $G$ to $H$. Yoshida \cite{MR1213788} proved that this quantity is divisible by $\gcd(|G|,|H|)$ if $G$ is Abelian. However, one can observe that in certain cases $\#\hom(G,H)$ is divisible by a much larger factor.

For an integer $a$ and a prime $p$, the \emph{p-adic valuation} $v_p(a)$ of~$a$ is defined as the maximal integer~$e$ such that $p^e$ divides~$a$. The function $v_p(\cdot)$ extends naturally to rational numbers. We shall sometimes refer to $v_p(a)$ as the ``$p$-divisibility of~$a$". The aim of this paper is to establish lower bounds on $v_p(\#\hom(G,H))$ for certain groups $G$ and $H$.

Throughout the paper, $C_n$ stands for the cyclic group of order $n$.

The problem of determining the $p$-divisibility of $\#\hom(G,H)$ for
Abelian groups $G$ dates back to 1951 when Chowla, Herstein and Moore \cite{MR0041849} proved that
\[
v_2(\#\hom(C_2,S_n))\geq \floor{\frac{n}{2}}-\floor{\frac{n}{4}}.
\]
Further results have been established \cite{MR1213788,MR1728396,MR1284066,MR1904386,MR1673480}. The most
general result in this direction was given by Krattenthaler and
M\"{u}ller (see \cite{MR3492164} for a different proof of a slightly
weaker result), of which we present a (weaker) asymptotic version below.
\begin{theorem}[{\cite[Theorem~25]{MR3383810}}]
Let $G$ be a finite Abelian $p$-group. There exists a constant $C=C(G)$ such that
\begin{equation}\label{eqKratt}
v_p(\#\hom(G,S_n))\geq Cn+O(1).
\end{equation}
\end{theorem}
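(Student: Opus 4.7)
Since $G$ is Abelian, every $G$-action on a finite set decomposes uniquely into transitive orbits, and these orbits are parametrized (without conjugacy redundancy) by subgroups $H\le G$, with the orbit attached to $H$ having size $[G:H]$. The labeled exponential formula then yields
$$F_G(z) \;:=\; \sum_{n\ge 0}\frac{\#\hom(G,S_n)}{n!}\,z^n \;=\; \exp\!\left(\sum_{H\le G}\frac{z^{[G:H]}}{[G:H]}\right).$$
Setting $b_i := \#\{H\le G : [G:H] = p^i\}$ (so $b_0=1$, and $b_i\ge 1$ for all $i$ with $p^i\le|G|$, since Abelian $p$-groups possess subgroups of every order dividing their own), and using that every $[G:H]$ is a power of $p$, this becomes
$$F_G(z) \;=\; \exp\!\left(\sum_{i\ge 0} b_i \frac{z^{p^i}}{p^i}\right).$$

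Writing $\#\hom(G,S_n) = n!\,[z^n]F_G(z)$, the target reduces to a lower bound of the form $v_p([z^n]F_G(z)) \ge (C - 1/(p-1))\,n + O(\log n)$, which combined with $v_p(n!) = (n - s_p(n))/(p-1)$ yields the desired linear estimate. The key $p$-adic input I would bring in is the Artin--Hasse exponential $E_p(z) := \exp\!\bigl(\sum_{i\ge 0}z^{p^i}/p^i\bigr)$, whose coefficients are classically $p$-integral. Since each $b_i$ is a positive integer in the relevant range, the plan is to express $F_G(z)$ as a product of (possibly shifted) Artin--Hasse factors $E_p(z^{p^k})^{c_k}$ multiplied by a residual factor whose denominators grow linearly in the degree but strictly more slowly than $n/(p-1)$. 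The $n!$ in front would then absorb those denominators with a surplus proportional to $n$, leaving the asserted linear gain in $v_p$.

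The explicit constant $C(G)$ is expected to emerge from the sequence $(b_i)$: groups $G$ with richer subgroup structure (more subgroups of small index) produce many small orbits, each contributing extra $p$-divisibility. Quantitatively, $C(G)$ should involve the quantities $b_i/p^i$ for small $i$, measuring how many independent Artin--Hasse-type factors can be extracted from $F_G$. I would make this explicit by induction on the subgroup lattice of $G$ (or on $|G|$), peeling off one Artin--Hasse factor at each stage and tracking the accumulated denominators.

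\textbf{Main obstacle.} The crux is that a naive term-by-term expansion of $[z^n]F_G(z)$ as a sum over tuples $(k_i)$ with $\sum p^i k_i = n$ fails: individual summands --- notably the ``all-fixed-points'' term with $k_0=n$ --- are $p$-adic units, so the needed divisibility must emerge only from cancellation across many terms, in direct analogy with Kummer's theorem for binomial coefficients. Finding a $p$-adic normal form of $F_G$ that makes this cancellation manifest is where most of the technical work lies: the variable multiplicities $b_i$ obstruct a clean single-step Artin--Hasse factorization, so one must combine the Artin--Hasse manipulations with Witt-vector-type identities and a careful induction to control the surviving denominators. This is presumably the point at which Krattenthaler--M\"uller invest the bulk of their original argument, using plethystic operations on the cycle-index series of $S_n$ rather than direct $p$-adic analysis of $F_G$.
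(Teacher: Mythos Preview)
The paper does not prove this statement. It is quoted in the Introduction as a result of Krattenthaler and M\"uller \cite{MR3383810} (with a reference also to \cite{MR3492164} for an alternative proof of a slightly weaker version), purely as motivation for the paper's own work on $\#\hom(G,GL_n(\Fq))$. There is therefore no ``paper's own proof'' to compare your plan against.

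That said, your outline is methodologically close to what the paper does for the $GL_n$ problem in Section~2: write the generating function as the exponential of a sum indexed by powers of $p$, then extract $p$-divisibility information via a factorization into pieces whose $p$-adic properties are understood. Your Artin--Hasse manoeuvre plays the same structural role as the paper's Theorem~\ref{thDivFMain} (bounding $v_p$ of coefficients of $\exp\bigl(\sum_i f_i(z^{dp^i})\bigr)$ in terms of the minimal $p^{-i}(b_i-\tfrac{1}{p-1})$) combined with the auxiliary integrality results of Subsection~\ref{ssAux}. In the $S_n$ setting your generating function $F_G(z)=\exp\bigl(\sum_i b_i z^{p^i}/p^i\bigr)$ is correct, and rewriting $\sum_i b_i z^{p^i}/p^i$ as $\sum_k (b_k-b_{k-1})\,p^{-k}\log E_p(z^{p^k})$ does indeed reduce the problem to controlling fractional powers of Artin--Hasse factors; the genuine work, as you correctly flag under ``Main obstacle,'' is showing those fractional exponents do not destroy $p$-integrality faster than $n!$ supplies it. Your sketch stops short of resolving this, so it remains a plan rather than a proof --- but since the paper itself defers to \cite{MR3383810} for the argument, there is nothing further to grade here.
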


A natural problem arises when one replaces the symmetric groups in the above results by other families of finite groups. In this paper, we will treat the case where $G$ is a finite Abelian $p$-group and $H\simeq GL_n(\Fq)$ is a general linear group over a finite field. This is equivalent to counting all linear representations of $G$ over $\Fq$. Numerical computations suggest that $v_p(\#\hom(G,GL_n(\Fq)))$ is roughly $O(n)$ if $\ch\Fq\neq p$, and $O(n^2)$ if $\ch\Fq=p$. We will call the former the \emph{non-modular case}, and the latter the \emph{modular case}.

 The main goal of this paper is to rigorously establish the above empirical observations: in the non-modular case, we succeed in providing a proof in full generality, see Theorems~\ref{thDivHomMain1} and \ref{thDivHomMain2}. On the other hand, in the modular case, we provide a proof in the case where $G$ is a cyclic group, see Theorem~\ref{thDivModularMain},
while we have to leave the general case as an open problem.

Section 2 of this paper deals with the non-modular case. We prove a linear lower bound on $v_p(\#\hom(G,GL_n(\Fq)))$ using a generating function result due to Chigira, Takegahara and Yoshida \cite{MR1783923}; see Theorems~\ref{thDivHomMain1} and \ref{thDivHomMain2}. In Section~3 treats the modular case. As mentioned above, we restrict ourselves to the case where $G$ is cyclic. We show that, when the order of $G$ is a power of~$p$, then $v_p(\#\hom(G,GL_n(\FF{p^v})))$ grows quadratically in~$n$. Finally, in Section~4, we present some remarks and a conjecture about a general lower bound on $v_p(\#\hom(G,H))$ for all finite groups $G$ and $H$.



\section{The non-modular case}
Let $p$ be a prime  number and $q$ a prime power.
Throughout this section, $\Fq$ denotes a finite field with $q$ elements, where $\ch\Fq\neq p$.

In this section, we will give a lower bound on $v_p(\#\hom(G,GL_n(\Fq)))$ by utilizing a product formula for the generating function for the homomorphism numbers given by Chigira, Takegahara and Yoshida (Theorem \ref{thGenFun}). We first present a general argument (Theorem \ref{thDivFMain}) for bounding the $p$-divisibility of coefficients of a power series in a product form. Attempting to insert the generating function directly into this argument leads to a lower bound that is frequently not tight (Theorem \ref{thDivHomLowerBound1}). However, by utilizing an alternate factorization of the generating function, one can improve this bound to be asymptotically tight in most cases (Theorems~\ref{thDivHomMain1} and \ref{thDivHomMain2}). The remaining cases will be treated in Section~\ref{ssSpecialCase}.

\subsection{The generating function}\label{ssGenFun}
In this subsection, we will begin by quoting the aforementioned product formula for the generating function for the homorphism numbers in the lemma below. This formula involves the dimensions of certain irreducible representations of the group $G$ over $\Fq$.
Lemmas~\ref{leNumberOfIrrRep}--\ref{thFormulaOfLogF} serve to compute the number of irreducible representations of a given dimension, and thereby make the generating function formula completely explicit.

\begin{lemma}[Yoshida et al.,\cite{MR1783923}]\label{thGenFun}
Let $q$ be a prime power, and $G$ be an Abelian group with $\gcd(|G|,q)=1$. Then we have
\begin{equation} \label{eqGenFunDef}
F(G,q;z)=\sum_{n=0}^{\infty}\frac{(-1)^n\#\operatorname{Hom}(G,GL_n(\Fq))}{q^{\binom{n}{2}}(q;q)_n}z^n=\prod_i\left(\sum_{n\geq0}\frac{(-1)^nz^{d_in}}{q^{d_i\binom{n}{2}}(q^{d_i};q^{d_i})_n}\right),
\end{equation}
where $d_1,d_2,\dots$ are the dimensions of the irreducible representations of $G$ over $\Fq$. 
\end{lemma}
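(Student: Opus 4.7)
The plan is to expand both sides of the identity as $q$-series in $z$ and match coefficients of $z^n$ via the explicit decomposition of semisimple $\Fq[G]$-modules. Since $\gcd(|G|,q)=1$, Maschke's theorem makes $\Fq[G]$ semisimple, so every finite-dimensional $\Fq$-representation of $G$ splits as $V\cong\bigoplus_i V_i^{\oplus m_i}$, where $V_i$ runs over the irreducible representations with $\dim_{\Fq}V_i=d_i$. A homomorphism $\rho:G\to GL_n(\Fq)$ is the same datum as an $n$-dimensional representation $V$ together with a choice of ordered $\Fq$-basis of $V$, modulo the action of the $G$-automorphism group of $V$. This orbit-counting argument yields
\[
\#\hom(G,GL_n(\Fq))=\sum_{(m_i):\,\sum_i m_id_i=n}\frac{|GL_n(\Fq)|}{|\operatorname{Aut}_G(V)|}.
\]

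To evaluate $|\operatorname{Aut}_G(V)|$, I would argue that, because $G$ is abelian, any simple $\Fq[G]$-module $V_i$ is isomorphic to a quotient $\Fq[G]/\mathfrak{m}_i$ by a maximal ideal, hence is itself a finite field extension of $\Fq$ and so is $\FF{q^{d_i}}$. Consequently $\operatorname{End}_G(V_i)\cong\FF{q^{d_i}}$ and $\operatorname{Aut}_G(V)\cong\prod_i GL_{m_i}(\FF{q^{d_i}})$. Combining this with the classical order formula $|GL_N(\FF{Q})|=Q^{\binom{N}{2}}(-1)^N(Q;Q)_N$, applied both to $GL_n(\Fq)$ in the numerator and to each factor $GL_{m_i}(\FF{q^{d_i}})$ in the denominator, a routine cancellation gives
\[
\frac{(-1)^n\#\hom(G,GL_n(\Fq))}{q^{\binom{n}{2}}(q;q)_n}=\sum_{(m_i):\,\sum_i m_id_i=n}\prod_i\frac{(-1)^{m_i}}{q^{d_i\binom{m_i}{2}}(q^{d_i};q^{d_i})_{m_i}}.
\]

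Finally, multiplying by $z^n$, writing $z^n=\prod_i z^{d_i m_i}$ (using $n=\sum_i d_i m_i$) and summing over $n\ge 0$ removes the linear constraint, so the resulting unconstrained sum over all tuples $(m_i)$ factors as the product on the right-hand side of \eqref{eqGenFunDef}. The only substantive step is the identification of $\operatorname{End}_G(V_i)$ with $\FF{q^{d_i}}$: a priori Schur's lemma and Wedderburn's little theorem only say that this endomorphism ring is a finite field extension of $\Fq$, and one must exploit the commutativity of $G$ to pin down its degree as exactly $d_i$. Once this ring-theoretic step is settled, the rest of the proof is mechanical manipulation of $q$-Pochhammer symbols.
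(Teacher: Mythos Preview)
The paper does not supply its own proof of this lemma: it is quoted verbatim as a result of Chigira, Takegahara and Yoshida \cite{MR1783923} and used as a black box. So there is nothing in the paper to compare your argument against.

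That said, your proposed proof is correct and is essentially the standard one. The orbit count
\[
\#\hom(G,GL_n(\Fq))=\sum_{[V]:\,\dim V=n}\frac{|GL_n(\Fq)|}{|\operatorname{Aut}_G(V)|}
\]
is exactly the observation that $GL_n(\Fq)$ acts on $\hom(G,GL_n(\Fq))$ by conjugation with orbits indexed by isomorphism classes of $n$-dimensional $\Fq[G]$-modules and stabilisers equal to $\operatorname{Aut}_G(V)$. Your identification $\operatorname{End}_G(V_i)\cong\FF{q^{d_i}}$ is correctly justified: since $G$ is abelian, $\Fq[G]$ is a commutative semisimple ring, every simple module is $\Fq[G]/\mathfrak m_i$ for a maximal ideal, and its endomorphism ring (acting by multiplication) is the same quotient, hence a degree-$d_i$ extension of $\Fq$. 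The only cosmetic point worth tightening is the sign bookkeeping: one should record that $|GL_N(\FF{Q})|=Q^{\binom{N}{2}}\prod_{j=1}^N(Q^j-1)$ and that $\prod_{j=1}^N(Q^j-1)=(-1)^N(Q;Q)_N$, so that both sides of your displayed identity are literally $\#\hom(G,GL_n(\Fq))/|GL_n(\Fq)|$ written in two ways. With that, the factorisation over $(m_i)$ is immediate.
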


Since the generating function $F(G,q;z)$ is represented as a product, we will consider its logarithm. Let
$$f(q,z)=\sum_{n\geq0}\frac{(-1)^nz^n}{q^{\binom{n}{2}}(q;q)_n},$$
and write
\begin{equation}\label{eqDefOfH}
\log f(q,z)=h(q,z).
\end{equation}
Theorem \ref{thGenFun} states that $$F(G,q;z)=\exp\left(\sum_{i}h(q^{d_i},z^{d_i})\right).$$
To give an explicit form for $F(G,q;z)$, we explicitly enumerate the irreducible representations in the lemma below.

\begin{lemma}\label{leNumberOfIrrRep}
Let $G$ be an Abelian group, and $q$ be a prime power such that $\gcd(|G|,q)=1$. Then, for any $d\in\mathbb{Z}^+$, the number of irreducible $d$-dimensional representations of $G$ over $\Fq$ is equal to the number of length-$d$ cycles of the map $g\mapsto g^q$ in the group $G$.
\end{lemma}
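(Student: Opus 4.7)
The plan is to describe the irreducible $\Fq$-representations of $G$ via Galois orbits of characters over $\overline{\Fq}$, and then identify the Frobenius action on the character group with the $q$-th power map on $G$ itself.

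Since $\gcd(|G|,q)=1$, Maschke's theorem applies; and because $G$ is Abelian, every irreducible representation over the algebraic closure $\overline{\Fq}$ is one-dimensional. Thus the $\overline{\Fq}$-irreducibles are exactly the characters in $\widehat{G}:=\hom(G,\overline{\Fq}^{\,*})$. Standard Galois descent then gives a bijection between the irreducible $\Fq$-representations of $G$ and the orbits of $\mathrm{Gal}(\overline{\Fq}/\Fq)$ on $\widehat{G}$, with the $\Fq$-dimension of each representation equal to the size of the corresponding orbit (the general formula ``orbit size times $\overline{\Fq}$-dimension'' collapses because here the $\overline{\Fq}$-dimension is $1$). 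Since this Galois group is topologically generated by the Frobenius $\phi\colon x\mapsto x^q$, which acts on characters by $\chi\mapsto\chi^q$ where $\chi^q(g):=\chi(g)^q$, the problem reduces to counting length-$d$ cycles of $\chi\mapsto\chi^q$ on $\widehat{G}$.

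It then suffices to exhibit an isomorphism of sets-with-self-map $(\widehat{G},\,\chi\mapsto\chi^q)\cong(G,\,g\mapsto g^q)$. Using the structure theorem I would write $G\cong\prod_i C_{m_i}$; each $m_i$ divides $|G|$ and is therefore coprime to $q$, so $\mu_{m_i}\subset\overline{\Fq}^{\,*}$ has order exactly $m_i$ and $C_{m_i}\cong\mu_{m_i}$ as abstract groups. Fixing such isomorphisms factor-by-factor yields
\[
G\cong\prod_i C_{m_i}\cong\prod_i\mu_{m_i}\cong\prod_i\hom(C_{m_i},\overline{\Fq}^{\,*})\cong\widehat{G},
\]
and under this identification both self-maps reduce on each factor to raising to the $q$-th power on a cyclic group of order $m_i$, so they are intertwined.

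Combining the two reductions yields the claimed equality of cycle counts and irreducible-representation counts. The only delicate point is the identification of the Frobenius on $\widehat{G}$ with the $q$-th power map on $G$ through an abstract-group isomorphism; this is where the hypothesis $\gcd(|G|,q)=1$ is used in an essential way, since it guarantees that $\overline{\Fq}^{\,*}$ contains roots of unity of every order dividing $|G|$, so that $\widehat{G}$ and $G$ are genuinely abstractly isomorphic and Pontryagin duality applies.
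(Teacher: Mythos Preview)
Your proof is correct and follows essentially the same route as the paper: split over an extension field into one-dimensional characters, identify the $\Fq$-irreducibles with Frobenius orbits on $\widehat{G}$, and then transfer to $G$ via $\widehat{G}\cong G$. The paper works over a finite splitting field $\FF{q^r}$ rather than $\overline{\Fq}$, but this is cosmetic. Your explicit factor-by-factor construction of the intertwining isomorphism is more careful than the paper's one-line ``noticing that $\hat G\simeq G$''; in fact it is slightly more than needed, since any group isomorphism between abelian groups automatically commutes with the $q$-th power map.
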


\begin{proof}
Since $G$ is Abelian, every irreducible representation decomposes into a direct sum of 1-dimensional representations (or linear characters) over a sufficiently large extension of $\Fq$, namely $\FF{q^r}$ for some $r$ such that $\exp G\mid (q^r-1)$. Here $\exp G$ is the least common multiple of the order of elements in $G$. Let $\hat{G}=\{\chi_1,\chi_2,\dots,\chi_{|G|}\}$ be the \emph{character group} of $G$ consisting of all linear characters of $G$.

Any representation of $G$ over $\FF{q^r}$ is an integral linear combination of these linear characters. Among those, the representations of $G$ over $\Fq$ can be characterized by the invariance under the action of the Frobenius map $\chi\mapsto\chi^q$. Consequently, any irreducible representation over $\Fq$ is a sum over characters within a single orbit in $\hat{G}$ under this action. Therefore, the set of irreducible representations is in bijection with the set of these orbits, and the dimension of such a representation is equal to the size of its corresponding orbit. The lemma follows by noticing that $\hat{G}\simeq G$.
\end{proof}

To count the cycles in Lemma~\ref{leNumberOfIrrRep}, we also need a fundamental divisibility result concerning the $p$-divisibility of $q^n-1$.
\begin{lemma}\label{leDivQNMinus1}
Suppose that $p$ is a prime, $p\nmid q$, and the order of $q\pmod{p}$ is $d$. Suppose that $\lambda_i=v_p(q^{dp^i}-1).$ Then, for every integer $n\geq0$, we have:
\begin{itemize}
\item If $d\nmid n$, then $v_p(q^n-1)=0$.
\item If $d\mid n$, then $v_p(q^n-1)=\lambda_{v_p(n)}$.
\end{itemize}
It follows that $v_p((q;q)_n)=\sum_{i\geq 0}\lambda_{i}(\floor{n/dp^i}-\floor{n/dp^{i+1}})$.
\end{lemma}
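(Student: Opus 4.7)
The plan is to prove the lemma by elementary factorization, avoiding any appeal to Lifting-the-Exponent as a black box. Throughout, observe first that $d$ divides $p-1$ by Fermat's little theorem, so $p \nmid d$; this will let us identify $v_p(i)$ with $v_p(i/d)$ whenever $d \mid i$, which is used in the final counting step.

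For the first bullet, I would argue that $p \mid q^n - 1$ iff $q^n \equiv 1 \pmod p$ iff $d \mid n$ by the definition of $d$ as the multiplicative order of $q$ modulo $p$; hence $d \nmid n$ forces $v_p(q^n - 1) = 0$. For the second bullet, write $n = d p^j m$ with $j = v_p(n)$ and $p \nmid m$ (using $p \nmid d$), and factor
\[
q^n - 1 \;=\; (q^{d p^j})^m - 1 \;=\; (q^{d p^j} - 1)\Bigl((q^{d p^j})^{m-1} + (q^{d p^j})^{m-2} + \cdots + 1\Bigr).
\]
Since $q^{d p^j} \equiv 1 \pmod p$ (as $d \mid dp^j$), the bracketed sum is congruent to $m \pmod p$, which is nonzero mod $p$ by the choice of $m$. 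Therefore the bracketed factor contributes nothing to $v_p$, and we obtain $v_p(q^n - 1) = v_p(q^{d p^j} - 1) = \lambda_j$, as required.

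For the closing formula, since $(q;q)_n = \prod_{i=1}^n (q^i - 1)$, I would sum the two bullets over $i$: only indices with $d \mid i$ contribute, so writing $i = dj$ and using $v_p(i) = v_p(j)$,
\[
v_p\bigl((q;q)_n\bigr) \;=\; \sum_{j=1}^{\lfloor n/d \rfloor} \lambda_{v_p(j)}.
\]
Grouping the terms by the value $i := v_p(j)$, the number of $j \in \{1, \dots, \lfloor n/d \rfloor\}$ with $v_p(j) = i$ equals $\lfloor \lfloor n/d \rfloor / p^i \rfloor - \lfloor \lfloor n/d \rfloor / p^{i+1} \rfloor$, which simplifies via the iterated-floor identity $\lfloor \lfloor x/a \rfloor / b \rfloor = \lfloor x/(ab) \rfloor$ to $\lfloor n/(dp^i) \rfloor - \lfloor n/(dp^{i+1}) \rfloor$, yielding the claimed identity.

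I do not anticipate a genuine obstacle here; the only thing to watch out for is that the argument is uniform in the prime $p$ (in particular valid for $p = 2$, where LTE normally requires a separate statement). The factorization proof above sidesteps that issue automatically because it collapses the entire higher-power-of-$p$ behavior into the single definition $\lambda_i := v_p(q^{dp^i} - 1)$, so no case split on the parity of $p$ is ever required.
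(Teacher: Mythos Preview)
Your proof is correct and follows essentially the same route as the paper: both arguments write $n = dp^{j}\ell$ with $p\nmid\ell$, factor $q^{n}-1$ as $(q^{dp^{j}}-1)$ times the geometric sum, and observe that the latter is $\equiv\ell\pmod p$, hence a $p$-adic unit. Your write-up is slightly more detailed in that you make explicit the observation $p\nmid d$ and spell out the counting argument for the final formula, which the paper leaves implicit.
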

\begin{proof}
The first part is obvious. For the second part, we write $n=ldp^i$ where $p\nmid l$. Then we have
\begin{equation}
\frac{q^{n}-1}{q^{dp^i}-1}= 1+q^{dp^i}+q^{2dp^i}+\dots+q^{(l-1)dp^i}\equiv l\not\equiv 0\pmod{p},
\end{equation}
so $v_p(q^n-1)=v_p(q^{dp^i-1})=\lambda_i$.
\end{proof}

Based on the two lemmas above, we are ready to give an explicit expression for the generating function $F(G,q;z)$ given in Theorem~\ref{thGenFun}.

\begin{lemma}\label{thFormulaOfLogF}
Let $G=\prod_{j=1}^{r}C_{p^j}^{k_j}$ be a finite Abelian $p$-group, where $k_1,k_2,\dots,k_r$ are non-negative integers and $k_r>0$. Let $q$ be a prime power such that $p\nmid q$, and $d$ be the order of $q$ modulo $p$. Then there exists an integer sequence $c_0,c_1,\dots$ such that
\begin{multline}\label{eqFormulaOfLogF}
\log F(G,q;z)\\=\left(h_1(q,z)-\frac{1}{d}h_d(q,z)\right)+\frac{1}{d}\sum_{i=0}^{\infty}p^{c_{\lambda_i}-i}\left(h_{dp^i}(q,z)-\frac{1}{p}h_{dp^{i+1}}(q,z)\right),
\end{multline}
where the sequences $c_i$ and $\lambda_i$ are defined by
\begin{equation}
c_i=\sum_{j=1}^{r}\min(i,j)k_j
\end{equation}
and
\begin{equation}
\lambda_i=v_p(q^{dp^i}-1).
\end{equation}
\end{lemma}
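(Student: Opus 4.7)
The plan is to start from Theorem~\ref{thGenFun}, which after taking logarithms gives $\log F(G,q;z) = \sum_i h(q^{d_i}, z^{d_i})$, where $\{d_i\}$ is the multiset of dimensions of irreducible representations of $G$ over $\Fq$. By Lemma~\ref{leNumberOfIrrRep}, this coincides with the multiset of orbit lengths of the Frobenius map $\phi : g \mapsto g^q$ on $G$. Writing $N_\ell$ for the number of $\phi$-orbits of length $\ell$ and setting $h_\ell(q,z) := h(q^\ell, z^\ell)$, we have $\log F(G,q;z) = \sum_{\ell \geq 1} N_\ell\, h_\ell(q, z)$. The real work is to compute the $N_\ell$ and rearrange.

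For $g \in G$ of order $p^j$, the cycle length is the least $\ell \geq 1$ with $p^j \mid q^\ell - 1$; this is $1$ when $j = 0$, and Lemma~\ref{leDivQNMinus1} shows it equals $dp^{i^*(j)}$ for $j \geq 1$, where $i^*(j) = \min\{i \geq 0 : \lambda_i \geq j\}$. The number of elements of $G$ of order dividing $p^j$ is $\prod_k p^{\min(j,k)k_k} = p^{c_j}$, so the number of elements of order exactly $p^j$ is $p^{c_j} - p^{c_{j-1}}$. Summing over $j$ with $\lambda_{i-1} < j \leq \lambda_i$ (adopting $\lambda_{-1} := 0$), the number of non-identity elements of cycle length $dp^i$ is $p^{c_{\lambda_i}} - p^{c_{\lambda_{i-1}}}$. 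Together with the identity, this yields
\begin{equation*}
\log F(G,q;z) = h_1(q,z) + \sum_{i \geq 0} \frac{p^{c_{\lambda_i}} - p^{c_{\lambda_{i-1}}}}{dp^i}\, h_{dp^i}(q,z),
\end{equation*}
a single formula that handles $d = 1$ (when the $i = 0$ term in the sum merges with the leading $h_1$) and $d > 1$ (when $h_1$ stands apart from $h_d$) simultaneously.

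The remaining step is a routine algebraic manipulation. Split the sum into $\frac{1}{d}\sum_i p^{c_{\lambda_i}-i} h_{dp^i}$ and $-\frac{1}{d}\sum_i p^{c_{\lambda_{i-1}}-i} h_{dp^i}$, then in the second sum separate the $i = 0$ term (which contributes $-h_d/d$ using $p^{c_{\lambda_{-1}}} = 1$) and reindex the rest via $i \mapsto i+1$. The remaining two sums pair up into $\frac{1}{d}\sum_{i \geq 0} p^{c_{\lambda_i}-i}\bigl(h_{dp^i} - h_{dp^{i+1}}/p\bigr)$, giving
\begin{equation*}
\log F(G,q;z) = \Bigl(h_1 - \tfrac{1}{d}h_d\Bigr) + \frac{1}{d}\sum_{i \geq 0} p^{c_{\lambda_i}-i}\bigl(h_{dp^i} - \tfrac{1}{p} h_{dp^{i+1}}\bigr),
\end{equation*}
which is the stated identity. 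Integrality of the $c_i$ is clear from their defining formula.

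The main obstacle I anticipate is the careful bookkeeping required to handle $d = 1$ and $d > 1$ uniformly: in the former case the identity's orbit of length $1$ coalesces with all elements whose order divides $p^{\lambda_0}$, whereas in the latter case it is isolated. The convention $\lambda_{-1} := 0$ (so that $p^{c_{\lambda_{-1}}} = 1$) is the key device that fuses both cases into one expression, at the modest cost of a single reindexing step.
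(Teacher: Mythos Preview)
Your proof is correct and follows essentially the same approach as the paper's: both start from the cycle-counting expression $\log F = \sum_e n_e\,h_e$ coming from Theorem~\ref{thGenFun} and Lemma~\ref{leNumberOfIrrRep}, compute the $n_e$ via Lemma~\ref{leDivQNMinus1} together with the element-count $\#\{g:\operatorname{ord}(g)\mid p^j\}=p^{c_j}$, and then rearrange into the telescoping form~\eqref{eqFormulaOfLogF}. The only cosmetic difference is that the paper writes out the values of $n_e$ separately for $d=1$ and $d>1$ before substituting, whereas you fuse both cases with the convention $\lambda_{-1}=0$ and also spell out the Abel-summation reindexing that the paper leaves implicit.
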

\begin{proof}
Theorem \ref{thGenFun} and Lemma~\ref{leNumberOfIrrRep} imply the representation
\begin{equation}\label{eqLogF1}
\log F(G,q;z)=\sum_{e\geq 1}n_e h_e(q,z),
\end{equation}
where $n_e$ is the number of length-$e$ cycles of the map $g\mapsto g^q$ in the group $G$.

An element $g\in G$ belongs to a cycle with length dividing $e$ if and only if the order of $g$ divides $q^e-1$. Since $G$ is a $p$-group, the order of $g$ must be some power of $p$. Lemma~\ref{leDivQNMinus1} implies that the only possible cycle lengths are $1,d,dp,dp^2,\dots$. Therefore, for every $i\geq0$, the elements with corresponding cycle length dividing $dp^i$ are exactly those with order dividing $p^{\lambda_i}$.

We note that the number of elements in $G$ with order dividing $p^i$ is given by $p^{c_i}$ (since they form a subgroup isomorphic to $\prod_{j=1}^{r}C_{p^{\min\{i,j\}}}^{k_j}$). Therefore, we have
\begin{equation}
\sum_{e\mid dp^i}en_e=p^{c_{\lambda_i}}.
\end{equation}
Using this equation we can calculate the quantity $n_e$ for all $e$.
If $d>1$, then we have
\begin{align*}
n_1 &= 1 \\
n_d &= \frac{1}{d}(p^{c_{\lambda_0}}-1) \\
n_{dp^i} &= \frac{1}{dp^i}(p^{c_{\lambda_i}}-p^{c_{\lambda_{i-1}}}).
\end{align*}
and if $d=1$, then we have
\begin{align*}
n_1 &= p^{c_{\lambda_0}} \\
n_{p^i} &= \frac{1}{p^i}(p^{c_{\lambda_i}}-p^{c_{\lambda_{i-1}}}).
\end{align*}
Insertion of these results into \ref{eqLogF1} finishes the proof of the lemma.
\end{proof}

\subsection{Bounding the $p$-divisibility of a power series}\label{ssFramework}
In this section, we present a general framework for bounding the quantity $v_p([z^n]F(G,q;z))$ from below. We begin with a lemma that relates the $p$-divisibility of the coefficients of a power series and the coefficients of its exponential.

\begin{lemma}\label{leExpPDiv}
Let $f(z)=\sum_{n\geq 1}a_nz^n$ be a power series, and $b\in\mathbb{Z}$. Suppose that $v_p(a_n)\geq b$ for all $n$, and $v_p(a_1)=b$. Then we can say the following about the quantity $v_p([z^n]e^{f(z)})$:
\begin{itemize}
\item If $b\leq0$, then $v_p([z^n]e^{f(z)})=nb-v_p(n!)$ for all $n\geq 0$.
\item If $b>0$, then $v_p([z^n]e^{f(z)})\geq b$ for all $n>0$.
\end{itemize}
\end{lemma}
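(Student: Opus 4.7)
The plan is to exploit the differential equation $g'(z) = f'(z) g(z)$ satisfied by $g(z) := e^{f(z)}$. Extracting the coefficient of $z^{n-1}$ yields the recursion
\[
n g_n = \sum_{k=1}^{n} k\, a_k\, g_{n-k}, \qquad g_0 = 1,
\]
where $g_n := [z^n] e^{f(z)}$. This will be the main tool for the case $b \le 0$, while the case $b > 0$ admits a more direct attack from the defining series expansion of $e^{f(z)}$.

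For $b > 0$, I would write $e^{f(z)} = \sum_{k \ge 0} f(z)^k / k!$. Every coefficient of $f(z)^k$ is a sum of products of $k$ coefficients of $f$, each of $p$-adic valuation at least $b$, so $v_p([z^n] f(z)^k / k!) \ge kb - v_p(k!)$. Combined with Legendre's bound $v_p(k!) \le (k-1)/(p-1)$ (valid for $k \ge 1$ since $s_p(k) \ge 1$), a short check shows $kb - v_p(k!) \ge b$ whenever $b \ge 1$ and $p$ is prime. Taking the minimum over $k \ge 1$ then gives the claim.

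For $b \le 0$, I would induct on $n$, using $g_0 = 1$ and $g_1 = a_1$ as base cases. Under the inductive hypothesis, the $k = 1$ term $a_1 g_{n-1}$ in the recursion has valuation exactly $b + \bigl((n-1)b - v_p((n-1)!)\bigr) = nb - v_p((n-1)!)$, while for $k \ge 2$ one has $v_p(k a_k g_{n-k}) \ge v_p(k) + (n-k+1)b - v_p((n-k)!)$. Comparing the two, the latter exceeds the former by at least $v_p(k) + (1-k)b + v_p\bigl((n-1)(n-2)\cdots(n-k+1)\bigr)$. When $b < 0$, the summand $(1-k)b$ is strictly positive for $k \ge 2$, so the $k = 1$ contribution strictly dominates; dividing through by $n$ then gives $v_p(g_n) = nb - v_p(n!)$.

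The main obstacle is the endpoint $b = 0$, where $(1-k)b$ vanishes and several terms can share the minimum valuation. The estimates above produce only the inequality $v_p(g_n) \ge -v_p(n!)$, and to extract the claimed equality one must rule out cancellation between the dominant contributions. I would approach this by refining the inductive hypothesis to track the residue of $n!\, g_n$ modulo $p$, exploiting that $v_p(a_1) = 0$ forces $a_1$ to be a $p$-adic unit so that the contribution from the finest set-partition (the term $a_1^n$ in $n![z^n]e^{f(z)}$) retains a nonzero residue.
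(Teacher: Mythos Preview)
Your arguments for $b>0$ and for $b<0$ are both correct. For $b>0$ your expansion argument is precisely what the paper means by ``obvious''. For $b<0$ you argue inductively via the recursion $ng_n=\sum_{k}ka_kg_{n-k}$, whereas the paper works directly from $e^{f}=\sum_m f^m/m!$: it bounds the valuation of each summand below by $mb-v_p(m!)$, observes that this quantity is strictly decreasing in $m$ when $b<0$, and concludes that the unique term of minimal valuation is $a_1^n/n!$. Both routes are short; the paper's is a little more direct, while yours makes the inductive structure and the special role of $a_1$ explicit.

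Your suspicion about the endpoint $b=0$ is well placed, but the remedy you propose cannot work, because the equality claim in the lemma is in fact \emph{false} for $b=0$. For any odd prime $p$ take $f(z)=z-\tfrac12 z^2$; then $v_p(a_1)=v_p(a_2)=0$, yet
\[
g_2=[z^2]e^{f(z)}=\frac{a_1^2}{2}+a_2=0,
\]
so $v_p(g_2)=+\infty\neq 0=-v_p(2!)$. Less degenerately, $p=3$ and $f(z)=z+z^2$ give $g_2=\tfrac32$ with $v_3(g_2)=1\neq 0$. In particular the finest-partition contribution $a_1^n$ to $n!\,g_n$ \emph{can} be cancelled modulo $p$ by the coarser terms, so tracking residues will not recover equality.

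The paper's own proof has the same gap: the assertion that $m\mapsto mb-v_p(m!)$ is ``strictly decreasing'' fails when $b=0$. Fortunately, in the downstream applications the equality part of the lemma is only ever invoked at an index with $b_l<0$ (the minimizing index $l$ is always chosen so that $a_l<0$), so the main theorems are unaffected. For your write-up it is cleanest to state the equality only for $b<0$ and to record the inequality $v_p(g_n)\ge -v_p(n!)$ separately for $b=0$; this is exactly what your argument already establishes.
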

\begin{proof}
The $b>0$ case is obvious. If $b\leq 0$, we have
\begin{align*}
[z^n]\exp f(z)&=\sum_{m=1}^{\infty}\frac{1}{m!}[z^n]f^m(z)\\
&=\sum_{m=1}^{\infty}\frac{1}{m!}\sum_{n_1+n_2+\dots+n_m=n}a_{n_1}a_{n_2}\dots a_{n_m}.
\end{align*}
Using the relationship $v_p\big(\sum x_i\big)\geq \min v_p(x_i)$, we have
\begin{align*}
v_p([z^n]\exp f(z))&\geq\min_{m\ge1}\min_{n_1+n_2+\dots+n_m=n}(mb-v_p(m!))\\
&=\min_{1\le m\le n}(mb-v_p(m!))\\
&=nb-v_p(n!).
\end{align*}
Equality holds because $nb-v_p(n!)$ is strictly decreasing, so the unique minimizer is at $m=n$ and $n_1=n_2=\dots=n_m=1$.
\end{proof}

Since we have asymptotically $v_p(n!)=\frac{n}{p-1}+O(1)$, the above lemma states that for $b\leq0$, the $p$-divisibility of $[z^n]\exp f(z)$ is roughly given by $n(b-\frac{1}{p-1})$. This gives an intuition for the theorem below, which will be the key for proving the main results of this section, given in Theorems~\ref{thDivHomMain1} and \ref{thDivHomMain2} below.

\begin{theorem}\label{thDivFMain}
Let $p$ be a prime. Suppose that $$F(z)=F^\dag(z)\exp\left(\sum_{i=0}^{\infty}f_i\left(z^{dp^i}\right)\right),$$ where $f_i\in p^{b_i}\Zp[[z]]$ for some $b_i\in\Z$, and $F^\dag(z)\in\Zp[[z]]$. Let
$$l={\arg\min}_i p^{-i}\left(b_i-\frac{1}{p-1}\right).$$
Then
\begin{equation}
v_p([z^n]F(z)^{\alpha})\geq b_l\floor{\frac{n}{dp^l}}-v_p\left(\floor{\frac{n}{dp^l}}!\right)
\end{equation}
for any $\alpha\in\Zp$ such that $v_p(\alpha)=0$. Here, $\arg\min_ih_i$ means the smallest index $i$ such that $h_i\le h_j$  for all $j$. If $l$ is the unique minimizer, then this bound is tight whenever $dp^l\mid n$.
\end{theorem}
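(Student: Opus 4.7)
The plan is to factor
\[
F(z)^\alpha = F^\dag(z)^\alpha \prod_{i \ge 0} \exp\bigl(\alpha f_i(z^{dp^i})\bigr)
\]
and analyze $[z^n]F(z)^\alpha$ as the resulting convolution. The $p$-adic binomial expansion gives $F^\dag(z)^\alpha \in \Zp[[z]]$, so that factor contributes $v_p \ge 0$ to every coefficient. For each exponential factor, the proof of Lemma~\ref{leExpPDiv} yields the finer estimate
\[
v_p\bigl([z^{dp^i k_i}]\exp(\alpha f_i(z^{dp^i}))\bigr) \ge \min_{1 \le j \le k_i}\bigl(j b_i - v_p(j!)\bigr)
\]
for $k_i \ge 1$ (and $=0$ for $k_i = 0$), since $v_p(\alpha) = 0$ keeps $\alpha f_i \in p^{b_i}\Zp[[z]]$. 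Writing $\phi(k_i, b_i)$ for this per-factor bound, $v_p([z^n]F(z)^\alpha)$ is at least the minimum of $\sum_i \phi(k_i, b_i)$ over non-negative integer tuples $(k_i)$ with $\sum_i dp^i k_i \le n$.

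The core of the proof is to show this minimum is at least $m b_l - v_p(m!)$ with $m := \floor{n/(dp^l)}$. Setting $\mu_i := b_i - 1/(p-1)$, I would use the identity $j b_i - v_p(j!) = j \mu_i + s_p(j)/(p-1)$, where $s_p$ is the base-$p$ digit sum. The hypothesis on $l$ reads $\mu_i \ge p^{i-l}\mu_l$ for all $i$ (strict for $i \ne l$ when $l$ is unique), so the excesses $\nu_i := \mu_i - p^{i-l}\mu_l$ are non-negative. Since $k_l \le m$ is automatic (from $k_l p^l \le n/d$), and since the map $k \mapsto k b_l - v_p(k!)$ has first difference $b_l - v_p(k+1) \le 0$ when $b_l \le 0$ (hence is non-increasing), the ``all-at-$l$'' tuple $k_l = m$, $k_i = 0$ for $i \ne l$ realizes exactly the value $m b_l - v_p(m!)$. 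For any other tuple the lower bound would combine (i) the monotonicity gap $\phi(k_l, b_l) - \phi(m, b_l) \ge 0$, (ii) the non-negative quantity $\sum_{i \ne l} k_i \nu_i$, and (iii) the digit-sum subadditivity $\sum_i s_p(k_i) \ge s_p(\sum_i k_i p^i)$ (via the shift-invariance $s_p(k_i p^i) = s_p(k_i)$ and $s_p(a+b) \le s_p(a) + s_p(b)$).

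For the tightness claim, when $dp^l \mid n$ the all-at-$l$ tuple is admissible with $n_\# = 0$, and the corresponding single term of the convolution equals $F^\dag(0)^\alpha \cdot [z^n]\exp(\alpha f_l(z^{dp^l}))$, whose $v_p$ is exactly $m b_l - v_p(m!)$ by the equality clause of Lemma~\ref{leExpPDiv} (assuming the normalization $F^\dag(0) = 1$ implicit in the setup). Uniqueness of $l$ gives $\nu_i > 0$ for every $i \ne l$, so any alternative tuple strictly exceeds this target and the convolution has a unique term of minimal $p$-valuation, preventing cancellation.

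The main obstacle will be the combinatorial optimization: combining (i)--(iii) into a uniformly non-negative deficit for all $(k_i)$. A naive consolidation $\sum_i \phi(k_i, b_i) \ge S \mu_l/p^l + s_p(S)/(p-1)$ with $S := \sum_i k_i p^i$ is already strictly weaker than the target $m b_l - v_p(m!)$ when $p^l \nmid S$, so the accounting must track the interaction between the residue $S \bmod p^l$ and the individual digit sums $s_p(k_i)$, under a case split on the sign of $\mu_l$.
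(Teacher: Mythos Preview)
Your overall plan matches the paper's proof exactly: factor $F(z)^\alpha$, bound each exponential factor via Lemma~\ref{leExpPDiv}, and minimize over convolution tuples. The gap you flag at the end is real, and the paper resolves it by a single clean inequality that your digit-sum consolidation obscures. Instead of first forming $S=\sum_i k_ip^i$ and then wrestling with $s_p(S)$ versus $s_p(m)$, the paper transfers each summand \emph{individually} to level~$l$: for every $i$ with $b_i\le 0$ it shows
\[
b_i k_i - v_p(k_i!) \;\ge\; b_l\bigl(p^{i-l}k_i\bigr) - v_p\bigl((p^{i-l}k_i)!\bigr),
\]
using the exact identity $v_p\bigl((p^{i-l}k_i)!\bigr)-v_p(k_i!)=k_i\,\tfrac{p^{i-l}-1}{p-1}$ together with $\mu_i\ge p^{i-l}\mu_l$. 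After this transfer every contribution is an \emph{integer} weight $N_i:=p^{i-l}k_i$ at level~$l$; then ordinary subadditivity $\sum_i v_p(N_i!)\le v_p\bigl((\sum_i N_i)!\bigr)$ (your item~(iii), but now with no residue mod $p^l$ to track) plus monotonicity of $x\mapsto b_lx-v_p(x!)$ finishes. In your language: do not lower-bound $s_p(\sum k_ip^i)$; replace each $k_i\mu_i$ by $k_ip^{i-l}\mu_l$ \emph{first}, and the digit-sum bookkeeping becomes exact.

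Two remarks. First, the displayed transfer needs $p^{i-l}k_i\in\Z$, so it is literally valid only for $i\ge l$; the paper's proof tacitly restricts to $\{i:b_i\le 0\}$ and in every application one in fact has $b_i>0$ for $i<l$, but the abstract statement does not exclude $b_i\le 0$ with $i<l$, and neither argument handles that term as written. Second, your tightness sketch is correct in spirit; note that invoking the equality clause of Lemma~\ref{leExpPDiv} also requires $v_p([z^1]f_l)=b_l$ exactly, which the paper uses implicitly.
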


\begin{proof}
Lemma \ref{leExpPDiv} gives:
\begin{itemize}
\item If $b_i\leq0$, $v_p([z^{p^in}]\exp f_i(z))=n b_i-v_p(n!)$ for every $n\geq0$.
\item If $b_i>0$, $v_p([z^{p^in}]\exp f_i(z))\geq 0$ for every $n>0$.
\end{itemize}

Therefore, we may write
\begin{equation*}
[z^n]F(z)=\sum_{n_0+n_1p+\dots=n}\prod_{i\geq0}\left([z^{n_ip^i}]F_i(z)\right),
\end{equation*}
and the inequality $v_p(\sum x_i)\geq \min v_p(x_i)$ gives
\begin{align}\notag
v_p([z^n]F(z))&\geq\min_{n_0+n_1p+\dots=n}\sum_{i\geq0}\underbrace{v_p\left([z^{n_ip^i}]F_i(z)\right)}_{\text{$>0$ when both $b_i>0$ and $n_i>0$}}\\
\notag
&\geq \min_{n_0+n_1p+\dots=n}\sum_{b_i\leq 0}v_p\left([z^{n_ip^i}]F_i(z)\right)\\
&=\min_{n_0+n_1p+\dots=n}\sum_{b_i\leq 0}\left(b_in_i-v_p\left(n_i!\right)\right)\label{eqTemp1}
\end{align}

We claim that, for any $i$, we have
\begin{equation}\label{eqTemp2}
b_in_i-v_p\left(n_i!\right)\geq b_lp^{i-l}n_i-v_p\left((p^{i-l}n_i)!\right),
\end{equation}
and the inequality is strict whenever $n_i>0$ and $p^{-i}\left(b_i-\frac{1}{p-1}\right)> p^{-l}\left(b_l-\frac{1}{p-1}\right)$.
In order to see this, we proceed as follows:
\begin{align*}
&\left(b_in_i-v_p\left(n_i!\right)\right)- \left(b_lp^{i-l}n_i-v_p\left((p^{i-l}n_i)!\right)\right)\\
&=n_i(b_i-b_lp^{i-l})+v_p\left((p^{i-l}n_i)!\right)-v_p\left(n_i!\right)\\
&=n_i(b_i-b_lp^{i-l}+\frac{p^{i-l}-1}{p-1})\\
&=n_ip^i\left(p^{-i}\left(b_i-\frac{1}{p-1}\right)-p^{-l}\left(b_l-\frac{1}{p-1}\right)\right)\\
&\geq0.
\end{align*}
Combination of \ref{eqTemp1} and \ref{eqTemp2} leads to
\begin{align*}
v_p([z^n]F(G,q;z))&\geq\min_{n_0+n_1p+\dots=n}\sum_{b_i\leq 0}\left(b_in_i-v_p\left(n_i!\right)\right)\\
&\geq \min_{n_0+n_1p+\dots=n}\sum_{b_i\leq 0}\left(b_lp^{i-l}n_i-v_p\left((p^{i-l}n_i)!\right)\right)\\
&=\min_{n_0+n_1p+\dots=n}\left(b_l\sum_{b_i\leq 0}p^{i-l}n_i-\sum_{b_i\leq 0}\left(v_p\left((p^{i-l}n_i)!\right)\right)\right).
\end{align*}
We notice that $\sum_{b_i\leq 0}p^{i-l}n_i\leq \floor{np^{-l}}$, as well as the fact that
\begin{align*}
\sum_{b_i\leq 0}v_p\left((p^{i-l}n_i)!\right)&\leq v_p\left(\left(\sum_{b_i\leq 0}(p^{i-l}n_i)\right)!\right)\\
&\leq v_p\left(\floor{np^{-l}}!\right),
\end{align*}
to conclude
\begin{equation*}
v_p([z^n]F(G,q;z))\geq b_l\floor{\frac{n}{dp^l}}-v_p\left(\floor{\frac{n}{dp^l}}!\right).
\end{equation*}
\end{proof}

\subsection{Auxiliary results}\label{ssAux}
This part contains some divisibility results for the coefficients of certain linear combinations of various series $h(q^d,z^d)$ as defined in \ref{eqDefOfH}. They will be used when these series are substituted into Theorem~\ref{thDivFMain} in Subsection~\ref{ssMain}. We defer the proofs of the auxiliary results of this subsection to the Appendix.

\begin{theorem}\label{thLogOfGeneratingFunction}
The power series $h(q,z)$, as defined in \eqref{eqDefOfH}, has the form
\[
h(q,z)=\sum_{n\geq1}\frac{(-1)^{n-1}P_{n}(q)}{n q^{\binom{n}{2}}(q^n-1)}z^n,
\]
where $P_{n}\in\mathbb{Z}[x]$ is a polynomial such that $\deg P_{n}=\binom{n}{2}$, and $P_{dn}(\omega_d)=\binom{2n-1}{n-1}$, for any primitive $d$-th root of unity $\omega_d$.
\end{theorem}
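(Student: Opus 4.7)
The strategy is to use the logarithmic derivative identity and induction. From $h = \log f$ and $f'=h'f$, equating the coefficient of $z^{n-1}$ and substituting the claimed form of $h_n$, I would derive the recursion
\[
\sum_{j=1}^n q^{j(n-j)}P_j(q)(q;q)_{j-1}\binom{n}{j}_q = n, \tag{$\ast$}
\]
using the identity $(q;q)_n/[(1-q^j)(q;q)_{n-j}] = (q;q)_{j-1}\binom{n}{j}_q$. The $j=n$ term equals $P_n(q)(q;q)_{n-1}$, giving an inductive definition of $P_n$ with $P_1 = 1$.

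For the integrality $P_n \in \mathbb{Z}[q]$, I would show by induction that $(q;q)_{n-1}$ divides the numerator $n - \sum_{j<n}q^{j(n-j)}P_j(q)(q;q)_{j-1}\binom{n}{j}_q$, by checking the vanishing order at each primitive $e$-th root of unity $\omega_e$ for $e \leq n-1$ using $(\ast)$ at lower indices. For the degree claim, a direct count shows the $j$-th summand has degree $j(2n-j-1)$, maximized over $j \leq n-1$ at $j=n-1$ with value $2\binom{n}{2}$; combined with $\deg(q;q)_{n-1} = \binom{n}{2}$, this gives $\deg P_n \leq \binom{n}{2}$, and tracking leading coefficients inductively shows $P_n$ has leading coefficient $1$, so equality holds.

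The evaluation $P_{dn}(\omega_d) = \binom{2n-1}{n-1}$ is the most subtle part. Substituting $q = \omega_d$ into $(\ast)$ with $n$ replaced by $dn$, I would use $\binom{dn}{j}_q|_{q=\omega_d} = 0$ for $d\nmid j$ and $(q;q)_{j-1}|_{q=\omega_d} = 0$ for $j > d$; only the $j=d$ term survives at the naive level, yielding $P_d(\omega_d) = 1 = \binom{1}{0}$. To obtain $P_{dn}(\omega_d)$ for $n \geq 2$, I would rescale $q = \omega_d + \delta$ and $z = \delta^{1/d}w$, and expand $\log f(q,z)$ as a formal series in $\delta$; successive coefficients relate to $P_{dm}(\omega_d)$ via the residue formula $\mathrm{Res}_{q=\omega_d}h_{dm}(q) = (-1)^{m-1}P_{dm}(\omega_d)\omega_d/(dm)^2$, ultimately yielding the claimed values. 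The main obstacle is this extraction at roots of unity, which requires careful analysis of sub-leading Laurent behavior around $q = \omega_d$.
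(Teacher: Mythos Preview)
Your recursion $(\ast)$ is correct and the degree count goes through. The substantive parts of the theorem, however, are integrality and the evaluation at roots of unity, and both have gaps in your outline.

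For integrality you need $(q;q)_{n-1}=\prod_{e\le n-1}\Phi_e(q)^{\lfloor(n-1)/e\rfloor}$ to divide the numerator, so you must verify vanishing to order $\lfloor(n-1)/e\rfloor$ at each $\omega_e$, not just simple vanishing; plugging $(\ast)$ at lower indices yields one scalar relation per index, not derivative information, and you do not indicate how to upgrade this. The more serious gap is the evaluation $P_{dn}(\omega_d)=\binom{2n-1}{n-1}$. As you yourself note, substituting $q=\omega_d$ into $(\ast)$ at level $dn$ kills the $j=dn$ term since $(q;q)_{dn-1}\big|_{q=\omega_d}=0$ for $n\ge2$; only $j=d$ survives, and the resulting identity merely reconfirms $P_d(\omega_d)=1$ without saying anything about $P_{dn}(\omega_d)$. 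Your proposed remedy of rescaling $q=\omega_d+\delta$, $z=\delta^{1/d}w$ and reading off residues is not a proof: to carry it out you would need the full Laurent behaviour of every $P_j$ near $\omega_d$, and no mechanism is offered for why the central binomial coefficients should emerge from this limit.

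The paper sidesteps both issues by passing to $g(q,z)=f(q,qz)/f(q,z)$, so that $P_n(q)=(-1)^{n-1}nq^{\binom{n}{2}}[z^n]\log g(q,z)$. A short functional-equation argument identifies the coefficients of $g$ with the Carlitz--Riordan $q$-Catalan numbers, and expanding $\log g$ in terms of these integral polynomials gives $P_n\in\mathbb{Z}[q]$ immediately. The evaluation then follows from the closed product
\[
\prod_{i=0}^{d-1}g(\omega_d,\omega_d^{\,i}z)=\tfrac{1}{2}\bigl(1+\sqrt{1+4z^d}\,\bigr),
\]
whose logarithm is the generating series for $(-1)^{n-1}\binom{2n-1}{n-1}/n$. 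This product identity is the missing idea your residue scheme would have to reproduce, and nothing in the proposal points toward it.
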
.

\begin{theorem}\label{thDivLogFCase1}
Let $q$ be a prime power with $q\equiv1 \pmod{p}$. Then we have
\begin{equation}
(q-1)\left(h(q,z)-\frac{1}{p}h(q^p,z^p)\right)\in \Zp[[z]].
\end{equation}
\end{theorem}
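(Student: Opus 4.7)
The plan is to verify $p$-integrality coefficient-by-coefficient in $z$, using the explicit form $[z^n]h(q,z)=\frac{(-1)^{n-1}P_n(q)}{n\,q^{\binom{n}{2}}(q^n-1)}$ from Theorem~\ref{thLogOfGeneratingFunction}. Since $[z^n]h(q^p,z^p)$ vanishes unless $p\mid n$, two cases arise naturally.

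When $p\nmid n$, only the first term contributes, so the coefficient reads
\[
\frac{(-1)^{n-1}P_n(q)}{n\,q^{\binom{n}{2}}}\cdot\frac{q-1}{q^n-1}.
\]
The identity $\frac{q^n-1}{q-1}=1+q+\cdots+q^{n-1}\equiv n\pmod p$ shows that $\frac{q-1}{q^n-1}$ is a $p$-adic unit (using $p\nmid n$), and together with $n,q\in\Zp^\times$ and $P_n(q)\in\Zp$ this places the coefficient in $\Zp$.

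When $n=pm$, combining both contributions over a common denominator and using the identity $\binom{pm}{2}=p\binom{m}{2}+\frac{p(p-1)m^2}{2}$ yields
\[
\frac{(-1)^{m-1}}{pm\,[pm]_q\,q^{\binom{pm}{2}}}\Bigl[(-1)^{(p-1)m}P_{pm}(q)-q^{p(p-1)m^2/2}P_m(q^p)\Bigr],
\]
where $[pm]_q=(q^{pm}-1)/(q-1)$. Lifting-the-Exponent gives $v_p(pm\cdot[pm]_q)=2(1+v_p(m))$ for $p$ odd, and at least that much for $p=2$, so the claim reduces to proving the congruence
\[
(-1)^{(p-1)m}P_{pm}(q)\equiv q^{p(p-1)m^2/2}P_m(q^p)\pmod{p^{2(1+v_p(m))}}. \quad(\ast)
\]

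Proving $(\ast)$ is the heart of the matter and the main obstacle. For $p$ odd, $q^p\equiv 1\pmod{p^{v_p(q-1)+1}}$ allows one to absorb the factor $q^{p(p-1)m^2/2}$ into the modulus, reducing $(\ast)$ to $P_{pm}(q)\equiv P_m(q^p)\pmod{p^{2(1+v_p(m))}}$; for $p=2$ one must carry the $q^{e}$-factor throughout. I would then Taylor-expand both sides around $q=1$: the leading-order term becomes the Wolstenholme-type congruence $\binom{2pm-1}{pm-1}\equiv\binom{2m-1}{m-1}\pmod{p^{2(1+v_p(m))}}$ (supplied for $p\geq 5$ by the Ljunggren--Jacobsthal refinement $\binom{ap^k}{bp^k}\equiv\binom{a}{b}\pmod{p^{3k}}$, with $p\in\{2,3\}$ checked directly), while the higher-order Taylor coefficients require analogous congruences on the derivatives of $P_n$ at $q=1$, which can be extracted by inducting on~$n$ from the logarithmic definition $h=\log f$. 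A possibly cleaner alternative would be to work at the series level and establish an Artin--Hasse-type congruence $f(q,z)^p\equiv f(q^p,z^p)\cdot\exp\!\bigl(\tfrac{p}{q-1}R(q,z)\bigr)$ with $R\in\Zp[[z]]$, from which the theorem would follow after taking logarithms and multiplying by $(q-1)/p$.
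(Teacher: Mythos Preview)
Your reduction to the congruence $(\ast)$ is correct, and you are right that this is where all the content lies; but your proposed attack on $(\ast)$ has a genuine gap. Taylor-expanding around $q=1$ does give a Wolstenholme/Ljunggren--Jacobsthal congruence for the constant term, but the higher-order terms are congruences on $P_n^{(k)}(1)$ for every $k\geq1$, and there is no evident induction that extracts these from $h=\log f$ without essentially reproving the theorem. The Artin--Hasse alternative is plausible in spirit but equally unsubstantiated: exhibiting such an $R\in\Zp[[z]]$ is again the whole problem. As written, the proposal stops precisely at the hard step.

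The paper sidesteps $(\ast)$ entirely via a multiplicative decomposition. By M\"obius inversion on the $P_n$ it constructs integer polynomials $Q_d(q)\in\Z[q]$ (Lemma~\ref{leExpansionOfPn}) so that, with $k(q,z)=\sum_{d\geq1}(-1)^{d-1}Q_d(q)\,z^d\big/\bigl(q^{\binom{d}{2}}(q-1)\bigr)$, one has the Lambert-type identity
\[
h(q,z)=\sum_{n\geq1}\frac{k(q^n,z^n)}{n}.
\]
Then $h(q,z)-\tfrac1p h(q^p,z^p)=\sum_{p\nmid n}k(q^n,z^n)/n$, and since $v_p(q^n-1)=v_p(q-1)$ whenever $p\nmid n$, each summand already lies in $(q-1)^{-1}\Zp[[z]]$. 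The substantial work is pushed into proving $Q_d\in\Z[q]$, which the paper does by a necklace-counting argument (Lemma~\ref{leMoebiusDivisibility}) together with the root-of-unity evaluations $P_{dn}(\omega_d)=\binom{2n-1}{n-1}$ from Theorem~\ref{thLogOfGeneratingFunction}. In effect, this decomposition is what makes your congruence $(\ast)$ fall out for free, with no Wolstenholme-type input required.
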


\begin{theorem}\label{thDivLogFCase2}
Let $q$ be an odd prime power. Then we have
\begin{equation}
(q-1)\left(h(q,z)-\frac{1}{2}h(q^2,z^2)\right)\in z-\frac{z^2}{q}+2z^3\Zp[[z]].
\end{equation}
\end{theorem}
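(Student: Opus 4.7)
The plan is to parallel the strategy of Theorem \ref{thDivLogFCase1} and then sharpen the result. The background prime is $p=2$; since $q$ is odd, the hypothesis $q \equiv 1 \pmod{p}$ of Theorem \ref{thDivLogFCase1} is automatic, and that theorem already supplies the integrality $(q-1)(h(q,z) - \tfrac{1}{2} h(q^2,z^2)) \in \Z_2[[z]]$. Hence the task reduces to three claims: (i) the $z$-coefficient equals $1$, (ii) the $z^2$-coefficient equals $-1/q$, and (iii) for every $n \geq 3$, the $z^n$-coefficient has $v_2 \geq 1$. I would verify (i) and (ii) directly from $h_n = \frac{(-1)^{n-1}P_n(q)}{n q^{\binom{n}{2}}(q^n-1)}$ of Theorem \ref{thLogOfGeneratingFunction}, using $P_1 = 1$ and $P_2(q) = q+2$ (the latter pinned down by $P_2(1) = 3$ and $P_2(-1) = 1$).

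For (iii) I would split on the parity of $n$. The odd case is easy: since $h(q^2, z^2)$ contributes nothing to odd powers, the $z^n$-coefficient is $(q-1)h_n = \frac{(-1)^{n-1}P_n(q)}{n q^{\binom{n}{2}} [n]_q}$, whose denominator is an odd integer for $n$ odd and $q$ odd (all three factors are sums of an odd number of odd terms, or odd integers themselves). So $v_2$ of the coefficient equals $v_2(P_n(q))$; since $P_n(q) \equiv P_n(1) = \binom{2n-1}{n-1} \pmod{2}$ (as $q-1$ is even), and $v_2\bigl(\binom{2n-1}{n-1}\bigr) = s_2(n) - 1 \geq 1$ for any odd $n \geq 3$ (which is not a power of $2$), this closes the odd case.

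The main obstacle lies in the even case $n = 2m$, $m \geq 2$. Combining the two series contributions gives
\[
[z^{2m}]\bigl((q-1)(h(q,z)-\tfrac12 h(q^2,z^2))\bigr) = \frac{-\bigl[P_{2m}(q) + (-1)^{m-1} q^{m^2} P_m(q^2)\bigr]}{2m \, q^{2m^2-m} \, [2m]_q},
\]
and the lifting-the-exponent lemma for $p=2$ yields $v_2([2m]_q) = v_2(q+1) + v_2(m)$; the denominator then has $2$-adic valuation $1 + v_2(q+1) + 2v_2(m)$, so one must show the numerator has $2$-valuation at least $2 + v_2(q+1) + 2v_2(m)$. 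One factor of $v_2(q+1)$ comes for free from the observation that the numerator vanishes at $q=-1$: the Theorem \ref{thLogOfGeneratingFunction} identities $P_{2m}(-1) = P_m(1) = \binom{2m-1}{m-1}$, together with $(-1)^{m-1}(-1)^{m^2} = -1$, force cancellation, so $(q+1)$ divides the numerator in $\Z[q]$. The remaining factor $2^{2+2v_2(m)}$ is the technical heart of the proof; I would attack it by further evaluating the numerator at the higher cyclotomic points $\omega_{2^{k+1}}$ (which lie in the scope of Theorem \ref{thLogOfGeneratingFunction} precisely when $2^k \mid m$), so that the numerator picks up factors of $q^{2^k}+1$ in $\Z[q]$, each of which contributes exactly one power of $2$ upon specialization at odd $q$; the residual power of $2$ I would extract by an explicit congruence analysis of the quotient after all cyclotomic factors have been pulled out, using Taylor expansion of $P_{2m}$ and $P_m$ around the relevant roots of unity.
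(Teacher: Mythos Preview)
Your approach diverges from the paper's and, while the odd-$n$ analysis is clean and correct, the even-$n$ case contains a genuine gap.

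For $n=2m$ you correctly reach the target inequality $v_2\bigl(P_{2m}(q)+(-1)^{m-1}q^{m^2}P_m(q^2)\bigr)\ge 2+v_2(q+1)+2v_2(m)$. Your cyclotomic strategy is sound as far as it goes: the numerator does vanish at every primitive $2^{k+1}$-th root of unity for $0\le k\le v_2(m)$, so it is divisible in $\Z[q]$ by $\prod_{k=0}^{v_2(m)}(q^{2^k}+1)$. But specializing this product at an odd $q$ contributes only $v_2(q+1)+v_2(m)$ to the valuation (the $k=0$ factor gives $v_2(q+1)$, each higher $k$ gives exactly $1$). That leaves a deficit of $2+v_2(m)$, and your proposal to recover it by ``Taylor expansion around the roots of unity'' is not a proof. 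The numerator does \emph{not} in general have higher-order zeros at these points, so no further cyclotomic factors are available; the missing powers of $2$ must come from a genuinely arithmetic congruence for the quotient polynomial evaluated at odd integers, and you have supplied no mechanism for this. This is exactly the hard content of the theorem.

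The paper sidesteps this difficulty by a different route. Rather than working with $P_n$ directly, it invokes the finer decomposition from the proof of Theorem~\ref{thDivLogFCase1}, namely $h(q,z)-\tfrac12 h(q^2,z^2)=\sum_{\ell\text{ odd}}\ell^{-1}k(q^{\ell},z^{\ell})$, where $k$ is built from the integer polynomials $Q_d$ of Lemma~\ref{leExpansionOfPn}. Each summand contributing to $[z^n]$ is then a $2$-adic integer whose parity equals that of $Q_m(1)$, and a separate lemma (Lemma~\ref{leQn1Parity}, itself resting on the delicate estimate $v_2\bigl(\binom{4d-1}{2d-1}-(-1)^d\binom{2d-1}{d-1}\bigr)=2+2v_2(d)+s_2(d-1)$ of Lemma~\ref{lev2Binomial}) shows $Q_m(1)$ is odd iff $m$ is squarefree. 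The proof then finishes with a short counting argument: for $n\ge 3$ the number of factorizations $n=(2\ell-1)m$ with $m$ squarefree is always even. The $2$-adic heavy lifting is thus concentrated in Lemma~\ref{lev2Binomial}, which has no analogue in your outline; that lemma (or something of comparable strength) is what your even-$n$ argument is missing.
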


\begin{theorem}\label{thDivLogFCaseD}
Let $q$ be a prime power with $q\not\equiv1 \pmod{p}$. Suppose that $d$ is the order of $q$ in $\mathbb{Z}/p\mathbb{Z}$. Then
\begin{equation}
\exp\left(h(q,z)-\frac{1}{d}h(q^d,z^d)\right)\in \Zp[[z]].
\end{equation}
\end{theorem}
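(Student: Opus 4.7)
The plan is to apply a Witt integrality criterion in the form of congruences on the coefficients of $\log$, and then verify those congruences using the explicit form of $P_n(q)$ from Theorem \ref{thLogOfGeneratingFunction}.

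First, I would invoke the Witt integrality criterion: a formal power series $\exp\!\bigl(\sum_{n\ge 1}\beta_n z^n\bigr)$ with $\beta_n\in\Q_p$ lies in $1+z\Zp[[z]]$ if and only if
\[
\sum_{k\mid m}\mu(m/k)\,k\beta_k\equiv 0\pmod m\quad\text{in }\Zp,\quad\text{for every }m\geq 1.
\]
Equivalently, $\exp\!\bigl(\sum\beta_n z^n\bigr) = \prod_n (1-z^n)^{-w_n}$ with all $w_n\in\Zp$, where $n w_n=\sum_{k\mid n}\mu(n/k)k\beta_k$.

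Second, I would apply Theorem \ref{thLogOfGeneratingFunction} to expand $h(q,z)-\tfrac{1}{d}h(q^d,z^d)=\sum_n\beta_n z^n$, obtaining
\[
k\beta_k = c_k(q) - [d\mid k]\,c_{k/d}(q^d),\qquad c_k(q):=\frac{(-1)^{k-1}P_k(q)}{q^{\binom{k}{2}}(q^k-1)}.
\]
For $d\nmid k$ the factor $q^k-1$ is a $p$-unit, so $c_k(q)\in\Zp$; for $d\mid k$, say $k=dn$, the shared denominator $q^{dn}-1$ appears in both $c_{dn}(q)$ and $c_n(q^d)$, reducing the required matching to an agreement of the numerators $P_{dn}(q)/q^{\binom{dn}{2}}$ and $P_n(q^d)/q^{d\binom{n}{2}}$ modulo an appropriate power of $p$. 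This agreement is forced by $P_{dn}(\omega_d)=P_n(1)=\binom{2n-1}{n-1}$ from Theorem \ref{thLogOfGeneratingFunction}, combined with $q\equiv\omega_d\pmod p$ (for $\omega_d\in\Zp$ the Hensel lift of $q\bmod p$) and $q^d\equiv 1\pmod{p^{\lambda_0}}$ where $\lambda_0=v_p(q^d-1)$.

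Third, I would verify the Witt congruences case by case. For $d\nmid m$, only $c_k(q)$ with $d\nmid k$ enter, and the congruence reduces to a classical Fermat--M\"obius identity for the ``character'' $c_k(q)$, interpreted via the combinatorial meaning of $c_k$ as a cycle count in the $q$-Frobenius action. For $d\mid m$ with $m=dm'$, pairing $c_{dn}(q)$ with $c_n(q^d)$ as above, the remaining contribution is precisely the Witt congruence for $q^d$ at modulus $m'$, which can then be handled by the same argument applied to $q^d$ (noting $q^d\equiv 1\pmod p$).

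The main obstacle will be establishing the $p$-adic agreement of $P_{dn}(q)/q^{\binom{dn}{2}}$ and $P_n(q^d)/q^{d\binom{n}{2}}$ to the precision required by the Witt criterion. For moduli of the form $m=dp^a$ this requires Taylor expansions of $P_{dn}$ around $q=\omega_d$ and of $P_n$ around $q^d=1$, with control on derivatives of $P_n$ that I expect comes from an auxiliary structural property of the $P_n$ (perhaps an explicit product or recursion formula), to be established in the Appendix.
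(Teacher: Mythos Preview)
Your approach is genuinely different from the paper's, and it is worth contrasting them. The paper does \emph{not} verify any congruences on the coefficients of $h(q,z)-\tfrac{1}{d}h(q^d,z^d)$ directly. Instead, it observes the identity
\[
\log F(G,q;z)-\tfrac{1}{d}\log F(G,q^d;z^d)=h(q,z)-\tfrac{1}{d}h(q^d,z^d)
\]
for every Abelian $p$-group $G$, specializes $G=C_{p^k}$ and lets $k\to\infty$. By Frobenius's theorem on the number of $p$-power-order elements, each $F(C_{p^k},q;z)$ (and its limit) lies in $\Zp[[z]]$. The Dieudonn\'e--Dwork lemma then says that the set $\{f:\exp f\in\Zp[[z]]\}$ is a $\Zp$-module, so the $\Zp$-linear combination on the left lies in that module, hence so does the right-hand side. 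This is a three-line argument once the two lemmas are in place, and it uses no explicit information about $P_n(q)$ beyond what is already encoded in the generating-function identity.

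Your direct route via Witt congruences is in principle viable, but the obstacle you flag is real and is not disposed of by the machinery the paper develops elsewhere. The decomposition $h(q,z)=\sum_{n\ge1}k(q^n,z^n)/n$ from the appendix telescopes nicely for $h(q,z)-\tfrac{1}{p}h(q^p,z^p)$ (giving Theorem~\ref{thDivLogFCase1}), but for general $d>1$ the subtraction $h(q,z)-\tfrac{1}{d}h(q^d,z^d)$ does not collapse in the same way, so you would genuinely need the Taylor-type control on $P_{dn}(q)$ near $q=\omega_d$ that you allude to. Your reduction ``for $d\mid m$, pair and reduce to the $q^d$ case'' also risks circularity, since the $q^d\equiv1\pmod p$ case (Theorem~\ref{thDivLogFCase1}) only handles the $p$-telescoping, not the Witt congruences at composite moduli you would still need. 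In short: your plan is a plausible outline, but the hard step is left open, whereas the paper sidesteps it entirely with a group-theoretic shortcut.
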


\subsection{A first estimate}\label{ssMain}
We will now begin to actually bound the $p$-divisibility of the coefficients of the generating function $F(G,q;z)$ from Theorem~\ref{thGenFun} by using Theorems \ref{thFormulaOfLogF}, \ref{thDivFMain} and \ref{thDivLogFCase1}.

\begin{theorem}\label{thGenFunBound1}
Let $b_i=c_{\lambda_i}-\lambda_i-i$, where the sequences $c_i$ and $\lambda_i$ are as defined in Theorem \ref{thFormulaOfLogF}. Furthermore, let $l$ be the index that minimizes\break $p^{-l}(b_l-\frac{1}{p-1})$. Then we have the estimate
\begin{equation}\label{eqGenFunBound1}
v_p([z^n]F(G,q;z))\geq b_l\left\lfloor\frac{n}{dp^l}\right\rfloor-v_p\left(\left\lfloor\frac{n}{dp^l}\right\rfloor!\right).
\end{equation}
\end{theorem}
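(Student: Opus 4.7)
The plan is to massage the generating function $F(G,q;z)$ into the exact factorised shape required by Theorem~\ref{thDivFMain} and then invoke that theorem, with $\alpha = 1$, to read off the inequality. All the hard work has been done in Lemma~\ref{thFormulaOfLogF} and in the auxiliary Theorems~\ref{thDivLogFCase1} and \ref{thDivLogFCaseD}; what remains is bookkeeping.

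First I would take the expansion of $\log F(G,q;z)$ from Lemma~\ref{thFormulaOfLogF} and split it into two parts: an initial term
$$H^{\dag}(z) := h(q,z) - \frac{1}{d}\,h(q^{d}, z^{d})$$
(which vanishes identically when $d=1$), and, for each $i \geq 0$, a ``block'' of the form $f_i(z^{dp^{i}})$, where
$$f_{i}(w) := \frac{p^{\,c_{\lambda_i}-i}}{d}\left(h(q^{dp^{i}}, w) - \frac{1}{p}\,h(q^{dp^{i+1}}, w^{p})\right).$$
When $d > 1$, Theorem~\ref{thDivLogFCaseD} (applied with base $q$) gives $\exp H^{\dag}(z) \in \Zp[[z]]$, which I would adopt as the $p$-integral prefactor $F^{\dag}(z)$ of Theorem~\ref{thDivFMain}; when $d = 1$, $H^{\dag}$ vanishes and I would simply take $F^{\dag}(z) = 1$.

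Next I would verify that $f_i \in p^{b_i}\Zp[[w]]$ with $b_i = c_{\lambda_i}-\lambda_i-i$. Because $d$ is the order of $q$ modulo $p$, one has $q^{dp^{i}} \equiv 1 \pmod{p}$, so Theorem~\ref{thDivLogFCase1}, applied with base $q^{dp^{i}}$, yields
$$(q^{dp^{i}}-1)\left(h(q^{dp^{i}}, w) - \frac{1}{p}\,h(q^{dp^{i+1}}, w^{p})\right) \in \Zp[[w]].$$
Since $v_p(q^{dp^{i}}-1) = \lambda_i$ by Lemma~\ref{leDivQNMinus1}, and since $d \mid p-1$ forces $1/d \in \Zp$, the desired inclusion $f_i \in p^{\,c_{\lambda_i}-\lambda_i-i}\,\Zp[[w]]$ follows at once.

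Finally, with the factorisation $F(G,q;z) = F^{\dag}(z)\,\exp\bigl(\sum_{i \geq 0} f_i(z^{dp^{i}})\bigr)$ in hand, Theorem~\ref{thDivFMain} applied to the sequence $b_i = c_{\lambda_i}-\lambda_i-i$ delivers the stated bound. The main potential obstacle is purely arithmetic: one must correctly combine the factor $p^{\,c_{\lambda_i}-i}/d$ coming from Lemma~\ref{thFormulaOfLogF} with the denominator $q^{dp^{i}}-1$ appearing in Theorem~\ref{thDivLogFCase1} (whose $p$-adic valuation is exactly $\lambda_i$), and keep in mind that $d \mid p-1$ so $1/d$ is a $p$-adic unit; beyond this careful accounting, no new idea is required.
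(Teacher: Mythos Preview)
Your proposal is correct and follows essentially the same route as the paper's proof: both split $\log F(G,q;z)$ via Lemma~\ref{thFormulaOfLogF} into the ``initial'' piece $h(q,z)-\frac1d h(q^d,z^d)$ (handled by Theorem~\ref{thDivLogFCaseD}) and the blocks indexed by $i\ge0$ (whose $p$-adic valuation is pinned down to $b_i=c_{\lambda_i}-\lambda_i-i$ via Theorem~\ref{thDivLogFCase1} applied with base $q^{dp^i}$), and then feed this factorisation into Theorem~\ref{thDivFMain}. Your write-up is in fact slightly more explicit than the paper's in noting that $d\mid p-1$ ensures $1/d\in\Zp$ and in separating out the $d=1$ case.
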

\begin{proof}
Based on formula \ref{eqFormulaOfLogF}, we define for every $i\geq0$
\begin{equation}
F_i(z)=\exp\left(d^{-1}p^{c_{\lambda_i}-i}\left(h(q^{dp^i},z^{dp^i})-\frac{1}{p}h(q^{dp^{i+1}},z^{dp^{i+1}})\right)\right)
\end{equation}
and
\begin{equation}
F_*(z)=\exp\left(h(q,z)-\frac{1}{d}h(q^{d},z^{d})\right),
\end{equation}
so that $F(G,q;z)=F_*(z)\prod_i F_i(z)$.

We estimate the coefficients of $\log F_i(z)$ by Theorem \ref{thDivLogFCase1}:
\begin{align*}
v_p([z^{dp^{i}n}]\log F_i(z))&\geq c_{\lambda_i}-i+v_p\left([z^{dp^{i}n}]\left(h(q^{dp^i},z^{dp^i})-\frac{1}{p}h(q^{dp^{i+1}},z^{dp^{i+1}})\right)\right)\\
&=c_{\lambda_i}-i+v_p\left([z^n]\left(h(q^{dp^{i}},z)-\frac{1}{p}h(q^{dp^{i+1}},z^p)\right)\right)\\
&\geq c_{\lambda_i}-i-v_p(q^{p^{i}}-1)\\
&=c_{\lambda_i}-\lambda_i-i\\
&=b_i.
\end{align*}
Therefore, $\log F_i(z)\in p^{b_i}\Zp[[z]]$. We substitute this into Theorem \ref{thDivFMain} to get \ref{eqGenFunBound1}.
\end{proof}

In order to transform this bound into a lower bound for $v_p(\#\hom(G,GL_n(\Fq)))$, we need a result about the sequence $\lambda_i$.

\begin{lemma}\label{lePropertiesOfLambda}
Let $p$ be a prime, and $q$ be a prime power not divisible by $p$. Let $\lambda_i=v_p(q^{dp^i}-1)$ be as defined in \autoref{thFormulaOfLogF}, where $d$ is the order of $q$ mosulo $p$. Then the following holds:
\begin{enumerate}
\item\label{caseLambda1} If either $p\geq 3$ or $\lambda_0\geq 2$, then $\lambda_i=\lambda_0+i$ for all $i\geq0$.
\item\label{caseLambda2} If $p=2$ and $\lambda_0=1$, then $\lambda_i=\lambda_1+i-1$ for all $i\geq1$.
\end{enumerate}
\end{lemma}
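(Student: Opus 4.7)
The plan is to recognize this as the standard Lifting-the-Exponent phenomenon and reduce it to a single inductive step: writing $Q = q^{dp^i}$ and expanding $Q^p - 1$ by the binomial theorem, I will show that $\lambda_{i+1} = \lambda_i + 1$ whenever either $p$ is odd or $\lambda_i \geq 2$. Both cases of the lemma follow: in Case \ref{caseLambda1}, the induction runs from $i=0$; in Case \ref{caseLambda2}, a separate base-case calculation bumps $\lambda_1$ up to at least $2$ and then the same induction runs from $i=1$.

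Concretely, I would write $Q - 1 = p^{\lambda_i} u$ with $p \nmid u$ (possible since $\lambda_i \geq \lambda_0 \geq 1$) and expand
\[
Q^p - 1 = (1 + p^{\lambda_i} u)^p - 1 = \sum_{k=1}^{p} \binom{p}{k}\, p^{k \lambda_i} u^k.
\]
The $k=1$ summand equals $p^{\lambda_i + 1} u$, with $p$-adic valuation exactly $\lambda_i + 1$. For $2 \leq k \leq p-1$ (which only arises when $p$ is odd), $v_p\!\left(\binom{p}{k}\right) = 1$ by Kummer/Legendre, so those terms have valuation $1 + k\lambda_i \geq 1 + 2\lambda_i \geq \lambda_i + 2$. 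The $k=p$ term has valuation $p \lambda_i$. Under the running hypothesis the $k=1$ term strictly dominates: if $p$ is odd then $p\lambda_i \geq 3\lambda_i \geq \lambda_i + 2$, while if $p=2$ and $\lambda_i \geq 2$ then $p\lambda_i = 2\lambda_i \geq \lambda_i + 2$ (and no middle terms exist). Hence $v_p(Q^p - 1) = \lambda_i + 1$, i.e.\ $\lambda_{i+1} = \lambda_i + 1$.

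This settles Case \ref{caseLambda1} immediately by induction on $i$, since the hypothesis forces $\lambda_i \geq \lambda_0 \geq 2$ at every stage when $p = 2$. For Case \ref{caseLambda2}, the base step $i = 0$ (where $p = 2$ and $\lambda_0 = 1$) is not covered, and I would handle it by direct expansion: with $Q - 1 = 2u$ and $u$ odd,
\[
Q^2 - 1 = 4u + 4u^2 = 4u(1+u),
\]
and $1+u$ is even, so $\lambda_1 = v_2(Q^2 - 1) \geq 3$. Since now $\lambda_1 \geq 2$, the inductive step established above applies from $i = 1$ onward and yields $\lambda_i = \lambda_1 + (i-1)$ for every $i \geq 1$.

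The only subtlety, and the sole reason the lemma splits into two cases, is the well-known failure of LTE at $p = 2$ when $v_2(Q-1) = 1$; that is what forces Case \ref{caseLambda2}. Once the base case there is handled by the explicit factorization $4u(1+u)$, the rest is a uniform induction and no deeper obstacle arises.
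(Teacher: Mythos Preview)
Your proof is correct and follows essentially the same Lifting-the-Exponent idea as the paper. The paper factors $r^p-1=(r-1)(1+r+\cdots+r^{p-1})$ and computes the second factor modulo $p^2$, whereas you expand $(1+p^{\lambda_i}u)^p-1$ binomially; these are two standard and equivalent implementations of the same inductive step, and your handling of the $p=2$, $\lambda_0=1$ base case via $Q^2-1=4u(1+u)$ is in fact slightly more explicit than the paper's.
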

\begin{proof}
It suffices to prove that, except for the case where $p=2$ and $\lambda_i=1$, we always have $\lambda_{i+1}-\lambda_i=1$. Indeed, using the definition of $\lambda_i$, and writing $r=q^{dp^i}$ for simplicity, we conclude
\begin{align*}
\lambda_{i+1}-\lambda_i&=v_p\left(\frac{r^p-1}{r-1}\right)\\
&=v_p\left(1+r+\dots+r^{p-1}\right).
\end{align*}
Again, from the definition of $\lambda_i$, we have $p^{\lambda_i}\mid (r-1)$. Suppose that $r\equiv a p+1\pmod{p^2}$ for some $a$ with $0\leq a<p$. Then calculating modulo $p^2$, we have
\begin{equation}
\begin{split}
1+r+\dots+r^{p-1}&\equiv 1+(ap+1)+(2ap+1)+\dots+((p-1)ap+1)\\
&\equiv p+\frac{a(p+1)}{2}p^2 \pmod{p^2}.
\end{split}
\end{equation}
We claim that $\frac{a(p+1)}{2}\in\Z$ unless $p=2$ and $\lambda_i=1$. If $p\geq 3$ then this is obvious. If $p=2$ and $\lambda_i>1$ then we have $p^2\mid (r-1)$, so $a=0$ and the claim also holds. This claim yields $1+r+\dots+r^{p-1}\equiv p\pmod{p^2}$, and therefore $v_p\left(1+r+\dots+r^{p-1}\right)=1$, as desired.
\end{proof}

We are ready to establish a first lower bound for $v_p(\#\hom(G,GL_n(\Fq)))$.

\begin{theorem}\label{thDivHomLowerBound1}
Let $l$ and $b_l$ be as defined in Theorem \ref{thGenFunBound1}. Then we have
\begin{equation}
v_p(\#\hom(G,GL_n(\Fq)))\geq b_l\floor{\frac{n}{dp^l}}+\sum_{i=0}^l\floor{\frac{n}{dp^i}}+(\lambda_0-1)\floor{n/d}
\end{equation}
if either $p\geq 3$ or $\lambda_0\geq 2$, and
\begin{equation}
v_p(\#\hom(G,GL_n(\Fq)))\geq b_l\floor{\frac{n}{2^l}}+\sum_{i=0}^l\floor{\frac{n}{2^i}}+(\lambda_1-2)\floor{n/2}
\end{equation}
if $p=2$ and $\lambda_0=1$.
\end{theorem}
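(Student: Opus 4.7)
The plan is to translate the power-series bound of Theorem~\ref{thGenFunBound1} into the desired bound on $v_p(\#\hom(G,GL_n(\Fq)))$ by reading off the prefactors in the generating function~$F(G,q;z)$. From \eqref{eqGenFunDef} we have
\[
\#\hom(G,GL_n(\Fq))=(-1)^n q^{\binom{n}{2}}(q;q)_n\cdot[z^n]F(G,q;z),
\]
and since $p\nmid q$ the factor $q^{\binom{n}{2}}$ contributes nothing to $v_p$. Hence
\[
v_p(\#\hom(G,GL_n(\Fq)))=v_p((q;q)_n)+v_p([z^n]F(G,q;z)),
\]
and it suffices to lower-bound the two terms on the right and combine.

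For $v_p([z^n]F(G,q;z))$ I would invoke Theorem~\ref{thGenFunBound1} directly to get $b_l\floor{n/(dp^l)}-v_p(\floor{n/(dp^l)}!)$. For $v_p((q;q)_n)$, I would use the closed form at the end of Lemma~\ref{leDivQNMinus1}, which after reindexing the telescoping sum gives
\[
v_p((q;q)_n)=\lambda_0\floor{n/d}+\sum_{i\geq 1}(\lambda_i-\lambda_{i-1})\floor{n/(dp^i)}.
\]
To dispose of the $v_p(\floor{n/(dp^l)}!)$ term I would use the standard identity $\floor{\floor{m}/p^j}=\floor{m/p^j}$ and Legendre's formula to rewrite
\[
v_p(\floor{n/(dp^l)}!)=\sum_{j\geq 1}\floor{n/(dp^{l+j})}=\sum_{i>l}\floor{n/(dp^i)}.
\]

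At this point the proof is a bookkeeping exercise: substitute everything in and apply Lemma~\ref{lePropertiesOfLambda}. In case~\ref{caseLambda1} one has $\lambda_i-\lambda_{i-1}=1$ for every $i\geq 1$, so
\[
v_p((q;q)_n)-v_p(\floor{n/(dp^l)}!)=\lambda_0\floor{n/d}+\sum_{i=1}^{l}\floor{n/(dp^i)},
\]
which, after isolating one copy of $\floor{n/d}$ to form the $i=0$ summand, yields exactly $(\lambda_0-1)\floor{n/d}+\sum_{i=0}^{l}\floor{n/(dp^i)}$ and hence the first asserted inequality. In case~\ref{caseLambda2} one has $p=2$, $d=1$, $\lambda_0=1$, and $\lambda_i-\lambda_{i-1}=\lambda_1-1$ for $i=1$ while $\lambda_i-\lambda_{i-1}=1$ for $i\geq 2$; the same combination yields $n+(\lambda_1-1)\floor{n/2}+\sum_{i=2}^{l}\floor{n/2^i}$, which regrouping as $(\lambda_1-2)\floor{n/2}+\sum_{i=0}^{l}\floor{n/2^i}$ produces the second inequality.

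There is essentially no conceptual obstacle here beyond the accurate accounting of the telescoping sums; the only place where one must be careful is at the boundary $i=1$ in the $p=2$, $\lambda_0=1$ situation, because $\lambda_1-\lambda_0$ differs from the generic value $1$ and is precisely what produces the shifted constant $\lambda_1-2$ instead of $\lambda_0-1$ in the final formula. Once this is tracked correctly, both cases follow by combining Theorem~\ref{thGenFunBound1}, Lemma~\ref{leDivQNMinus1}, and Lemma~\ref{lePropertiesOfLambda} as described.
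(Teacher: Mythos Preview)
Your proposal is correct and follows essentially the same route as the paper's own proof: both start from the identity $v_p(\#\hom(G,GL_n(\Fq)))=v_p((q;q)_n)+v_p([z^n]F(G,q;z))$, invoke Theorem~\ref{thGenFunBound1} for the second summand, expand $v_p((q;q)_n)$ via Lemma~\ref{leDivQNMinus1}, cancel the tail $\sum_{i>l}\floor{n/(dp^i)}$ against $v_p(\floor{n/(dp^l)}!)$ using Legendre's formula, and finally apply Lemma~\ref{lePropertiesOfLambda} to separate the two cases. Your write-up is in fact a bit more explicit than the paper's about the telescoping and the nested-floor identity, but there is no substantive difference in strategy.
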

\begin{proof}
The definition \ref{eqGenFunDef} of the generating function $F(G,q;z)$ leads to
\begin{align*}
v_p(\#\hom(G,GL_n(\Fq)))&=v_p([z^n]F(G,q;z))+v_p(|GL_n(\Fq)|)\\
&\geq b_l\left\lfloor\frac{n}{dp^l}\right\rfloor-v_p\left(\left\lfloor\frac{n}{dp^l}\right\rfloor!\right)+\lambda_0\floor{n/d}\\
&\kern2cm
+\sum_{i\geq 0}(\lambda_{i+1}-\lambda_i) \floor{n/dp^i},
\end{align*}
where we used Theorem \ref{thGenFunBound1} to get the inequality.

We know that $v_p\left(\floor{\frac{n}{dp^l}}!\right)=\sum_{i>l}\floor{\frac{n}{dp^i}}$, and that $\lambda_{i+1}-\lambda_i=1$ whenever $i\geq1$. This allows us to eliminate the infinite summation and conclude that
\begin{multline}
v_p(\#\hom(G,GL_n(\Fq)))\\\geq b_l\floor{\frac{n}{dp^l}}+\sum_{i=0}^l\floor{\frac{n}{dp^i}}+(\lambda_0-1)\floor{n/d}+(\lambda_1-\lambda_0-1)\floor{n/dp}.
\end{multline}
According to Lemma~\ref{lePropertiesOfLambda}, if either $p\geq 3$ or $\lambda_0\geq 2$ the term $\lambda_1-\lambda_0-1$ vanishes, while otherwise the term $\lambda_0-1$ vanishes. This yields the two forms in the theorem.
\end{proof}

\subsection{Improving the bound}

Now, we have, in principle, an algorithm for computing a lower bound for computing $v_p(\#\operatorname{Hom}(G,GL_n(\Fq)))$ given $G$ and $q$. It turns out that the bound given in \autoref{thDivHomLowerBound1} is tight only in some cases. To identify the cases where it is not tight, to give improved bounds in these cases, as well as to give a way to calculate the bound from $p$, $q$ and $G$ without explicitly writing out the sequence $b_i=c_{\lambda_i}-\lambda_i-i$, we need to look into the properties of $b_i$. These come from the interaction between $c_i$ (determined by the group $G$) and $\lambda_i$ (determined by $p$ and $q$).

We illustrate the cases where the bound given in \autoref{thDivHomLowerBound1} is not\break tight by numerical calculations. In \autoref{tbComputation} we list some numerical results for\break $v_p([z^n]F(G,q;z))$, and compare them with the calculations based on \autoref{thDivHomLowerBound1}, for some selected example cases. The minimizer(s) of $p^{-i}(b_i-\frac{1}{p-1})$ are shown in \textbf{bold}. This table suggests that if the bound given by \autoref{thDivHomLowerBound1} is not tight, then either $b_l=0$ and $b_{l+1}=-1$, or $p=2$, $b_l=-1$ and $b_{l+1}=-3$.

\begin{table}
\begin{tabular}{|c|c|c|p{4.6cm}|c|l|}
  \hline
  $G$ & $p$ & $q$ & Actual lower bound for $v_p(\#\hom(G,GL_n(\Fq)))$ & $b_i$ & Theorem \ref{thDivHomLowerBound1} \\
  \hline
  $C_2$ & 2 & 3 & $n-\floor{n/2}$ & $0,\mathbf{-3},-5,-7,\dots$& $n-\floor{n/2}$\\
  $C_4^3$ & 2 & 47 & $n+4\floor{n/2}-\floor{n/4}$ & $2,0,\mathbf{-2},-4,\dots$ & $n+4\floor{n/2}-\floor{n/4}$ \\
  $C_{27}$ & 3 & 163 & $3n$ & $\mathbf{-1},-3,-5,-7,\dots$ & $3n$ \\
  $C_9$ & 3 & 17 & $2\floor{n/2}-\floor{n/6}$ & $0,\mathbf{-2},-4,-6,\dots$ & $2\floor{n/2}-\floor{n/6}$ \\
  \hline
  $C_2$ & 2 & 5 & $n+\floor{n/2}-\floor{n/4}$ & $\mathbf{-1,-3},-5,-7,\dots$ & $n$\\
  $C_4^3$ & 2 & 31 & $n+4\floor{n/2}+\floor{n/4}-\floor{n/8}$ & $2,\mathbf{-1,-3},-5,\dots$ & $n+4\floor{n/2}$ \\
  $C_{27}$ & 3 & 7 & $n+\floor{n/3}+\floor{n/9}-3\floor{n/27}$ & $\mathbf{0,-1},-2,-4,\dots$ & $n$ \\
  $C_9$ & 3 & 5 & $\floor{n/2}+\floor{n/6}-2\floor{n/18}$ & $\mathbf{0,-1},-3,-5,\dots$ & $\floor{n/2}$ \\
  \hline
\end{tabular}
\caption{Selected examples of using Theorem \ref{thDivHomLowerBound1} and actual numerical results.}\label{tbComputation}
\end{table}

We will find a way to circumvent the first case in the following arguments. To begin with, we consider an extreme example where Theorem \ref{thDivHomLowerBound1} does not give a tight bound by taking $G=C_{p^k}$ and $k\to\infty$. The following lemma describes the situation under such a limit.
\begin{lemma}
Let $F$ be defined as in \eqref{eqGenFunDef}. Then 
\begin{multline}\label{eqFormulaOfLogFInfty}
\lim_{k\to\infty}\log F(C_{p^k},q;z)\\=\left(h(q,z)-\frac{1}{d}h(q^d,z^d)\right)+\frac{1}{d}\sum_{i=0}^{\infty}p^{\lambda_i-i}\left(h(q^{dp^i},z^{dp^i})-\frac{1}{p}h(q^{dp^{i+1}},z^{dp^{i+1}})\right).
\end{multline}
\end{lemma}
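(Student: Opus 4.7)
The plan is to apply Lemma~\ref{thFormulaOfLogF} directly to the special case $G=C_{p^k}$ and then pass to the limit coefficient-wise in the ring of formal power series $\Q[[z]]$ (which is the natural topology for such a limit).

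First I would specialize the data $k_1,\dots,k_r$ in Lemma~\ref{thFormulaOfLogF}. For $G=C_{p^k}$, we have $r=k$, $k_k=1$, and $k_j=0$ for $j<k$. Therefore
\[
c_i=\sum_{j=1}^{k}\min(i,j)\,k_j=\min(i,k).
\]
In particular, $c_{\lambda_i}=\min(\lambda_i,k)$. Substituting into \eqref{eqFormulaOfLogF} yields
\[
\log F(C_{p^k},q;z)=\left(h(q,z)-\tfrac{1}{d}h(q^d,z^d)\right)+\frac{1}{d}\sum_{i=0}^{\infty}p^{\min(\lambda_i,k)-i}\left(h(q^{dp^i},z^{dp^i})-\tfrac{1}{p}h(q^{dp^{i+1}},z^{dp^{i+1}})\right).
\]

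Next I would justify the coefficient-wise limit. Fix $n\ge 1$; only terms with $dp^i\le n$ can contribute to $[z^n]$ of the above sum, so only finitely many indices $i$ are relevant. For each such fixed $i$, as soon as $k\ge \lambda_i$ we have $\min(\lambda_i,k)=\lambda_i$, so the coefficient $p^{\min(\lambda_i,k)-i}$ stabilizes at $p^{\lambda_i-i}$. Hence for every $n$, the coefficient $[z^n]\log F(C_{p^k},q;z)$ stabilizes once $k\ge\max\{\lambda_i:dp^i\le n\}$, which gives convergence in $\Q[[z]]$ to the right-hand side of \eqref{eqFormulaOfLogFInfty}.

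There is essentially no obstacle here beyond bookkeeping: the result is really just the observation that $\min(\lambda_i,k)\to\lambda_i$ as $k\to\infty$, combined with the fact that only finitely many terms of the series in \eqref{eqFormulaOfLogF} affect any given coefficient of $z^n$. The only subtle point worth flagging is that the individual terms have denominators involving $q^{dp^i}-1$, so the limit should be understood in $\Q[[z]]$ rather than in $\Zp[[z]]$; this matches the form in which \eqref{eqFormulaOfLogFInfty} is stated.
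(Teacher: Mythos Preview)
Your proof is correct and follows the same approach as the paper: specialize Lemma~\ref{thFormulaOfLogF} to $G=C_{p^k}$, observe that $c_i=\min(i,k)$, and let $k\to\infty$. Your justification of the coefficient-wise limit is actually more detailed than the paper's, which simply states ``the claim follows by using Theorem~\ref{thFormulaOfLogF} and letting $k\to+\infty$.''
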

\begin{proof}
In this case we have $c_i=i$ for all $0\leq i\leq k$, and $c_i=k$ for all $i\geq k$. The claim follows by using \autoref{thFormulaOfLogF} and letting $k\to+\infty$.
\end{proof}

We use Theorem \ref{thDivHomLowerBound1} to calculate a lower bound on $$v_p(\lim_{k\to\infty}[z^n]F(C_{p^k},q,z)).$$ Here we have $b_i=c_{\lambda_i}-\lambda_i-i=-i$, so the minimizer of $p^{-i}(b_i-\frac{1}{p-1})$ is achieved at $i=0$ and $i=1$. The corresponding lower bound is given by
\begin{equation}
v_p(\lim_{k\to\infty}[z^n]F(C_{p^k},q,z))\geq -v_p(\floor{\frac{n}{d}}!).
\end{equation}

However, the lower bound can actually be improved to $0$ by using Lemma~\ref{leDivOfFCaseInfty}. This discrepancy is actually the underlying reason for most cases of non-tightness in Theorem \ref{thDivHomLowerBound1}.

In order to improve the bound, we exploit Lemma~\ref{leDivOfFCaseInfty} by considering an alternative decomposition of $\log F(G,q,z)$:
\begin{multline}\label{eqFormulaOfLogFAlt}
\log F(G,q,z)=p^s\lim_{k\to\infty}\log F(C_{p^k},q;z)-(p^s-1)\left(h_1(q,z)-\frac{1}{d}h_d(q,z)\right)\\
+\frac{1}{d}\sum_{i=0}^{\infty}\left(p^{c_{\lambda_i}-i}-p^{\lambda_i+s-i}\right)\left(h_{dp^i}(q,z)-\frac{1}{p}h_{dp^{i+1}}(q,z)\right).
\end{multline}
Here, using Theorem \ref{thDivLogFCaseD} and Lemma~\ref{leDivOfFCaseInfty}, we conclude that the exponential of the first two terms in \ref{eqFormulaOfLogFAlt} both belong to $\Zp[[z]]$.

To illustrate the point in considering this form of $\log F(G,q,z)$, we write
$$\log F_i^*(z)=\frac{1}{d}\left(p^{c_{\lambda_i}-i}-p^{\lambda_i+s-i}\right)\left(h_{dp^i}(q,z)-\frac{1}{p}h_{dp^{i+1}}(q,z)\right).$$
Note that, using Lemma~\ref{lePropertiesOfC}, we see that $\log F_i^*(z)$ vanishes whenever $r'\leq\lambda_i\leq r$. We can prove that $b_{i+1}=b_i-1$ if and only if $r'\leq\lambda_i< r$, so all the cases where $b_l=0$ and $b_{l+1}=-1$ disappear in the new decomposition.

Based on this, we let $a_i=+\infty$ if $r'\leq \lambda_i\leq r$, and $a_i=b_i$ otherwise, so that $\log F_i^*(z)\in p^{a_i}\Zp[[z]]$.

To prove the feasibility of replacing $b_i$ by $a_i$, we first list some properties of $c_i$ and $b_i$ in Lemmas~\ref{lePropertiesOfC} and \ref{lePropertiesOfB}.

\begin{lemma}\label{lePropertiesOfC}
Let $G=\prod_{i=1}^{r}C_{p^i}^{k_i}$ be a finite Abelian $p$-group, and $$c_i=\sum_{j=1}^{r}\min(i,j)k_j,$$ as defined in \autoref{thFormulaOfLogF}. Suppose that $G=C_{p^r}\times G_0$, where $\exp G_0=p^{r'}$ and $|G_0|=p^{s}$. Then the following holds:
\begin{itemize}
\item $c_i$ is concave and non-decreasing with respect to $i$.
\item For $i\geq r$, $c_i=r+s$.
\item For $r'\leq i\leq r$, $c_i=i+s$.
\item For $i<r'$, $c_{i+1}-c_i\geq2$.
\end{itemize}
\end{lemma}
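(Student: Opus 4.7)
My plan is to reduce all four claims to a single formula for the first difference $c_{i+1}-c_i$ and then do a small case analysis around the index $r'$.

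The starting observation is that
\begin{equation*}
c_{i+1}-c_i=\sum_{j=1}^{r}\bigl(\min(i+1,j)-\min(i,j)\bigr)k_j=\sum_{j>i}k_j,
\end{equation*}
since $\min(i+1,j)-\min(i,j)$ equals $1$ if $j\geq i+1$ and $0$ otherwise. This single identity already gives two of the four bullets for free: the right-hand side is non-negative (so $c_i$ is non-decreasing) and non-increasing in $i$ (so $c_i$ is concave).

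For the second bullet, I simply note that for $i\geq r$ the sum $\sum_{j>i}k_j$ is empty, so $c_i$ stabilizes at $c_r=\sum_{j=1}^{r}jk_j$; by the definition $|G|=p^r|G_0|=p^{r+s}$ this stable value is exactly $r+s$. For the third bullet, I split into the two cases that describe $r'$. If $k_r\geq 2$, then $r'=r$ and the range $r'\leq i\leq r$ degenerates to $i=r$, which is already covered. If $k_r=1$, then $r'=\max\{j<r:k_j>0\}$, so for any index $r'<j<r$ we have $k_j=0$. Hence for $r'\leq i<r$,
\begin{equation*}
c_i=\sum_{j=1}^{r'}jk_j+i\cdot k_r=\sum_{j=1}^{r'}jk_j+i,
\end{equation*}
and since $s=\log_p|G_0|=\sum_{j=1}^{r'}jk_j+r(k_r-1)=\sum_{j=1}^{r'}jk_j$ in this sub-case, we get $c_i=i+s$ as claimed.

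For the final bullet, I return to the formula $c_{i+1}-c_i=\sum_{j>i}k_j$ and check that it is at least $2$ when $i<r'$. In the sub-case $k_r\geq 2$ this is immediate because $\sum_{j>i}k_j\geq k_r\geq 2$. In the sub-case $k_r=1$, the assumption $i<r'$ gives $i+1\leq r'$, and then the sum $\sum_{j>i}k_j$ includes both $k_{r'}\geq 1$ and $k_r=1$, so it is at least $2$. This exhausts the cases.

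I do not expect any real obstacle; the mildly delicate point is just making sure the case split on whether $r'=r$ or $r'<r$ is consistent with the definitions of $r'$ and $s$, especially in how $k_r-1$ contributes to $|G_0|$.
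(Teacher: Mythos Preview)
Your proof is correct. The paper actually states this lemma without proof, treating the four bullets as elementary consequences of the definition, so there is no argument to compare against; your approach via the first-difference identity $c_{i+1}-c_i=\sum_{j>i}k_j$ together with the case split on whether $k_r\geq 2$ (so $r'=r$) or $k_r=1$ (so $r'=\max\{j<r:k_j>0\}$) is exactly the natural way to fill in the details, and it handles all the edge cases cleanly.
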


Having already equipped ourselves with Lemmas~\ref{lePropertiesOfLambda} and \ref{lePropertiesOfC}, we now combine these lemmas to describe some properties about $b_i$.

\begin{lemma}\label{lePropertiesOfB}
Under the assumptions of Lemmas \ref{lePropertiesOfLambda} and \ref{lePropertiesOfC}, we have $b_{i+1}-b_{i}\geq -2$ for all $i$, unless $i=0$, $p=2$, and $\lambda_0=1$. In the latter case, we have $b_i\geq0$. Moreover, if $\lambda_i>r$, then $b_{i+1}-b_{i}=-2$.
\end{lemma}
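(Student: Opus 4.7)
The plan is to work directly from the definition $b_i = c_{\lambda_i} - \lambda_i - i$, which gives the telescoping identity
\[
b_{i+1} - b_i \;=\; (c_{\lambda_{i+1}} - c_{\lambda_i}) \;-\; (\lambda_{i+1} - \lambda_i) \;-\; 1.
\]
All three assertions of the lemma will follow by plugging in what Lemmas \ref{lePropertiesOfLambda} and \ref{lePropertiesOfC} say about the two sequences on the right-hand side; there is essentially no analytic content beyond that.

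For the main inequality $b_{i+1} - b_i \geq -2$, I would argue as follows. Outside the exceptional regime (that is, when $i \geq 1$, or $p \geq 3$, or $\lambda_0 \geq 2$), Lemma \ref{lePropertiesOfLambda} guarantees $\lambda_{i+1} - \lambda_i = 1$, so the identity collapses to $b_{i+1} - b_i = c_{\lambda_i + 1} - c_{\lambda_i} - 2$, and the monotonicity of $c$ from Lemma \ref{lePropertiesOfC} yields $\geq -2$ immediately. For the "moreover" clause, if $\lambda_i > r$ then $\lambda_{i+1} > r$ as well (the sequence $\lambda_j$ is strictly increasing by the proof of Lemma \ref{lePropertiesOfLambda}), so by Lemma \ref{lePropertiesOfC} both $c_{\lambda_i}$ and $c_{\lambda_{i+1}}$ equal $r+s$; combined with $\lambda_{i+1} - \lambda_i = 1$ (the hypothesis $\lambda_i \geq 2$ excludes the exceptional case), the identity becomes the equality $b_{i+1} - b_i = -2$.

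It remains to handle the exceptional case $i=0$, $p=2$, $\lambda_0 = 1$, where I would compute $b_0$ directly from its definition: $b_0 = c_{\lambda_0} - \lambda_0 = c_1 - 1 = \bigl(\sum_{j=1}^r k_j\bigr) - 1 \geq 0$, the last step using that the standing assumption $k_r > 0$ forces the sum to be at least $1$. The main obstacle, if one can call it that, is purely notational: one must verify that the hypothesis $\lambda_i > r$ of the "moreover" assertion implicitly rules out the exceptional index, which it does because $\lambda_0 > r \geq 1$ forces $\lambda_0 \geq 2$, placing us back in the generic regime of Lemma \ref{lePropertiesOfLambda}.
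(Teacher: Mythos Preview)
Your proof is correct and follows essentially the same route as the paper's: both start from the identity $b_{i+1}-b_i=(c_{\lambda_{i+1}}-c_{\lambda_i})-(\lambda_{i+1}-\lambda_i)-1$, invoke Lemma~\ref{lePropertiesOfLambda} for $\lambda_{i+1}-\lambda_i=1$ outside the exceptional index, use monotonicity of $c$ for the inequality, and compute $b_0=c_1-1\geq 0$ in the exceptional case. Your treatment is in fact slightly more explicit than the paper's in justifying why $c_1\geq 1$ and why the ``moreover'' hypothesis $\lambda_i>r$ automatically excludes the exceptional index.
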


\begin{proof}
We have $b_i=c_{\lambda_i}-\lambda_i-i$. Furthermore, since the sequences $c_i$ and $\lambda_i$ are nondecreasing, we have
\begin{equation}
b_{i+1}-b_i=c_{\lambda_{i+1}}-c_{\lambda_i}-(\lambda_{i+1}-\lambda_i)-1\geq-(\lambda_{i+1}-\lambda_i)-1.
\end{equation}
\autoref{lePropertiesOfLambda} tells us that $\lambda_{i+1}-\lambda_i=1$ unless $p=2$, $\lambda_0=1$, and $i=0$, which implies our claim. If the latter condition should be violated, then we have $b_0=c_1-1\geq 0$.

If $\lambda_i>r\geq1$, \autoref{lePropertiesOfC} says that $c_{\lambda_i}=c_{\lambda_{i+1}}=r+s$, so $b_{i+1}-b_i=-(\lambda_{i+1}-\lambda_i)-1=-2.$
\end{proof}

Based on the properties in Lemma~\ref{lePropertiesOfB}, we can show that by using the alternative form \ref{eqFormulaOfLogFAlt} and replacing $b_i$ by $a_i$ in Theorem \ref{thGenFunBound1}, we can eliminate most of the cases where Theorem \ref{thDivHomLowerBound1} does not give tight bounds, and at the same time provide an easier way to calculate the minimizer in Theorem \ref{thDivHomLowerBound1}.

\begin{lemma}\label{leFormMinimizerOfAL}
Let $l$ be the smallest index such that $a_l<0$. Then $l$ is the minimizer of the sequence $p^{-i}(a_i-\frac{1}{p-1})$, and it is unique unless $p=2$, $a_l=-1$ and $a_{l+1}=-3$. Moreover, we always have $\lambda_l>r$.
\end{lemma}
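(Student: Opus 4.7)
The plan is to analyze the sequence $S_i := p^{-i}(a_i - 1/(p-1))$ and show that $S_l$ is the (essentially unique) minimum, together with the structural fact $\lambda_l > r$. I would first establish $\lambda_l > r$ by ruling out $\lambda_l \leq r$: if $r' \leq \lambda_l \leq r$ then $a_l = +\infty$, contradicting $a_l < 0$; if $\lambda_l < r'$, then Lemma~\ref{lePropertiesOfC} yields $c_j \geq 2j$ for $j \leq r'$ (iterating the increment bound from $c_0 = 0$), and Lemma~\ref{lePropertiesOfLambda} together with $\lambda_0 \geq 1$ gives $\lambda_l - l \geq 1$ in both cases of the dichotomy (for the edge case $p = 2$, $\lambda_0 = 1$, one uses $\lambda_i = \lambda_1 + i - 1 \geq i + 1$ for $i \geq 1$). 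Hence $a_l = c_{\lambda_l} - \lambda_l - l \geq \lambda_l - l \geq 1$, a contradiction.

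Next I would pin down the behavior of $a_i$ on $T_2 := \{i : \lambda_i > r\}$: there Lemma~\ref{lePropertiesOfC} forces $c_{\lambda_i} = r + s$, so $a_i = r + s - \lambda_i - i$ and $a_{i+1} - a_i = -2$. If $l - 1 \in T_2$ as well, the minimality of $l$ forces $a_{l-1} \in \{0, 1\}$, so $a_l \in \{-2, -1\}$; otherwise $l$ is itself the smallest element of $T_2$, and no index of $T_2$ lies below $l$.

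The comparison $S_l \leq S_i$ now splits into three regimes. For $i \in T_1 := \{i : \lambda_i < r'\}$, the bound $a_i \geq 1$ (from the proof of Claim~1) gives $S_i \geq p^{-i}(p-2)/(p-1) \geq 0 > S_l$. For $i \in T_2$ with $i < l$, direct expansion yields
\[
p^l(S_i - S_l) = (p^{l-i} - 1)\left(a_l - \tfrac{1}{p-1}\right) + 2(l - i)\, p^{l-i},
\]
and a brief case check shows this is strictly positive for every $k := l - i \geq 1$, every $p \geq 2$, and every $a_l \in \{-1, -2\}$ (the worst case $k = 1$, $a_l = -2$ evaluates to exactly $1$ for all $p$, by a small algebraic identity). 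For $i \geq l$ in $T_2$, substituting $\delta_i := a_{i+1} - a_i = -2$ into the general recurrence $p^{i+1}(S_{i+1} - S_i) = -(p-1)a_i + \delta_i + 1$ yields $p^{i+1}(S_{i+1} - S_i) = -(p-1)a_i - 1$, which is nonnegative whenever $a_i \leq -1$, with equality only when $p = 2$ and $a_i = -1$.

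Assembling the three cases gives $S_l \leq S_i$ for all $i$, with strict inequality except possibly at $i = l + 1$ when $p = 2$ and $a_l = -1$; in this exceptional case $a_{l+1} = a_l - 2 = -3$, matching the lemma's uniqueness clause. The main obstacle I foresee is the case (ii) check: one must verify the closed-form inequality uniformly in $k$, $p$, and $a_l \in \{-1,-2\}$, and simultaneously observe that whenever the case is non-empty (so some $i \in T_2$ lies below $l$), upward-closedness of $T_2$ forces $l - 1 \in T_2$ as well, legitimizing the bound $a_l \geq -2$ used throughout.
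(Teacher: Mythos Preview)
Your proof is correct and follows essentially the same strategy as the paper's: show $\lambda_l > r$ by verifying $a_i > 0$ whenever $\lambda_i \le r$, then compare $S_l$ with $S_i$ separately for $i > l$ and $i < l$, exploiting that $a_{i+1} = a_i - 2$ on $T_2$. The one place where the paper is more economical is the treatment of indices $i < l$: rather than splitting into $T_1$ and $T_2 \cap \{i < l\}$ and carrying out your closed-form case~(ii) verification, the paper simply invokes $a_{l'} \ge 0$ (immediate from the definition of $l$), notes that $a_{l'} > 0$ already forces $S_{l'} \ge 0 > S_l$, and disposes of the lone boundary possibility $a_{l'} = 0$ (which forces $l' = l-1$, $a_l = -2$) by a one-line check. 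Your approach reaches the same conclusion with a bit more computation but no essential new idea.
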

\begin{proof}
For the sake of simplicity, we write $$a_i^*=p^{-i}\left(a_i-\frac{1}{p-1}\right).$$

We first prove that, for all $i\geq 0$ such that $\lambda_i\leq r$, we have $a_i>0$. If $r'\leq\lambda_i\leq r$, then $a_i=+\infty$ by definition. Let $I=\{i\geq 0\mid\lambda_i<r'\}$. For any two elements $i,j\in I$ such that $i<j$, \autoref{lePropertiesOfC} implies that $c_{\lambda_j}-c_{\lambda_i}\geq2(\lambda_j-\lambda_i)$. Therefore, we have
$$a_j-a_i=b_j-b_i=(c_{\lambda_j}-c_{\lambda_i})-(\lambda_j-\lambda_i)-(j-i)\geq0,$$
so $a_i$ is non-decreasing if $\lambda_i<r'$. It suffices to prove that $a_0>0$ whenever $\lambda_0<r'$ (so that $I$ is non-empty and $\min I=0$). Indeed, in this case we have $a_0=c_{\lambda_0}-\lambda_0\geq 2\lambda_0-\lambda_0>0$. Therefore, we have proved $\lambda_l>r$.

We proceed to show that $a_l^*\leq a_{l+i}^*$ for all $i>0$. Applying \autoref{lePropertiesOfB}, we see that $a_{i+1}=a_i-2$ whenever $\lambda_i>r$.

Now we conclude that
\begin{align*}
a_l^*-a_{l+i}^*&=p^{-{l+i}}\left(p^ia_l-a_{l+i}-\frac{p^i-1}{p-1}\right)\\
&\leq p^{-{l+i}}\left((p^i-1)a_l+2i-\frac{p^i-1}{p-1}\right)\\
&\leq p^{-{l+i}}\left(2i-p\frac{p^i-1}{p-1}\right)\\
&\leq 0.
\end{align*}
The last inequality holds since $p\geq 2$ and $\frac{p^i-1}{p-1}=1+p+\dots+p^{i-1}\geq i$. From this we see that $a_l^*\leq a_{l+i}^*$ for all $i>0$, and equality holds only if $a_l={-1}$, $i=1$ and $p=2$.

What remains is to prove $a_l^*<a_{l'}^{*}$ if $l'<l$. The definition of $l$ implies that $a_{l'}\geq0$ for all $l'<l$. Obviously we have $a_l^*<0<a_{l'}^{*}$ if $a_{l'}>0$, so we assume that $a_{l'}=0$ for some $l'<l$. If $\lambda_{l'}\leq r$, then $a_{l'}>0$, which is a contradiction. Thus we must have $\lambda_{l'}\geq r+1$, so $0=a_{l'}=a_{l}+2(l-l')$. By the minimality of $l$, the only possibility is that $l=l'+1$ and $a_{l}=-2$, but in this case $a_l^*<a_{l'}^{*}$ holds as well.
\end{proof}

Now we are in the position to state our main results. The first theorem corresponds to Case~\ref{caseLambda1} in Lemma~\ref{lePropertiesOfLambda}.

\begin{theorem}\label{thDivHomMain1}
Let $p$ be a prime, $G=\prod_{i=1}^{r}C_{p^i}^{k_i}$ be a finite Abelian $p$-group, and $q$ be a prime power such that $p\nmid q$. Suppose that $G=C_{p^r}\times G_0$, where $|G_0|=p^{s}$. Let $c_i$ and $\lambda_i$ be as defined in \autoref{thFormulaOfLogF}. Suppose that either $p\geq 3$ or $\lambda_0\geq2$. Then there exist integers $l$ and $a_l$ such that
\begin{align*}
v_p(\#\operatorname{Hom}(G,GL_n(\Fq)))&\geq \lambda_0\left\lfloor \frac{n}{d}\right\rfloor+\sum_{i=1}^{l}\left\lfloor \frac{n}{dp^i}\right\rfloor+a_l\left\lfloor \frac{n}{dp^l}\right\rfloor,
\end{align*}
where $d$ is the order of $q$ modulo $p$. Equality holds whenever $dp^l\mid n$, except if $p=2$ and $a_l=-1$.

The precise values of $l$ and $a_l$ are given by:
\begin{itemize}
\item If $\lambda_0>r+s$, then $l=0$ and $a_l=r+s-\lambda_0$.
\item If $r-s+2\leq \lambda_0\leq r+s$ and $r+s-\lambda_0$ is even, then $l=\frac{r+s-\lambda_0+2}{2}$ and $a_l=-2$.
\item If $r-s+2\leq \lambda_0\leq r+s$ and $r+s-\lambda_0$ is odd, then $l=\frac{r+s-\lambda_0+1}{2}$ and $a_l=-1$.
\item If $\lambda_0\leq r-s+1$, then $l=r-\lambda_0+1$ and $a_l=-r+s+\lambda_0-2$.
\end{itemize}
\end{theorem}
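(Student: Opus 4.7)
The plan is to apply Theorem \ref{thDivFMain} to the alternative decomposition \ref{eqFormulaOfLogFAlt} of $\log F(G,q;z)$, with the minimizer identified by Lemma \ref{leFormMinimizerOfAL}. Specifically, I would write $F(G,q;z)=F^\dag(z)\prod_i F_i^*(z)$, where $F^\dag(z)$ is the exponential of the first two summands of \ref{eqFormulaOfLogFAlt}, and
$$\log F_i^*(z)=\tfrac{1}{d}\bigl(p^{c_{\lambda_i}-i}-p^{\lambda_i+s-i}\bigr)\bigl(h(q^{dp^i},z^{dp^i})-\tfrac{1}{p}h(q^{dp^{i+1}},z^{dp^{i+1}})\bigr).$$
Combining Theorem \ref{thDivLogFCaseD} (applied to $\exp(h(q,z)-\tfrac{1}{d}h(q^d,z^d))$ raised to the integer power $-(p^s-1)$) with Lemma \ref{leDivOfFCaseInfty} (applied to the $C_{p^k}$ limit raised to the integer power $p^s$) yields $F^\dag(z)\in\Zp[[z]]$. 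A routine estimate mirroring the proof of Theorem \ref{thGenFunBound1} via Theorem \ref{thDivLogFCase1} then shows $\log F_i^*(z)\in p^{a_i}\Zp[[z]]$, where $a_i=+\infty$ exactly when $c_{\lambda_i}=\lambda_i+s$ (equivalently $r'\leq\lambda_i\leq r$, by Lemma \ref{lePropertiesOfC}), and $a_i=b_i=c_{\lambda_i}-\lambda_i-i$ otherwise; in the latter case Lemma \ref{lePropertiesOfC} forces $c_{\lambda_i}-\lambda_i<s$, so the difference $p^{c_{\lambda_i}-i}-p^{\lambda_i+s-i}$ has $p$-adic valuation exactly $c_{\lambda_i}-i$.

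Invoking Theorem \ref{thDivFMain} with $\alpha=1$ and the sequence $a_i$ in place of $b_i$ then yields
$$v_p\bigl([z^n]F(G,q;z)\bigr)\geq a_l\bigl\lfloor n/(dp^l)\bigr\rfloor-v_p\bigl(\bigl\lfloor n/(dp^l)\bigr\rfloor!\bigr),$$
where by Lemma \ref{leFormMinimizerOfAL} the index $l$ is the smallest one with $a_l<0$, and is unique outside the exceptional case $p=2$, $a_l=-1$, $a_{l+1}=-3$. Converting via $v_p(\#\hom(G,GL_n(\Fq)))=v_p([z^n]F(G,q;z))+v_p((q;q)_n)$, computing $v_p((q;q)_n)$ from Lemma \ref{leDivQNMinus1}, expanding $v_p(m!)=\sum_{j\geq1}\lfloor m/p^j\rfloor$, and using $\lambda_{i+1}-\lambda_i=1$ for $i\geq 0$ from case \ref{caseLambda1} of Lemma \ref{lePropertiesOfLambda}, the same telescoping argument as in the proof of Theorem \ref{thDivHomLowerBound1} reduces everything to the claimed form $\lambda_0\lfloor n/d\rfloor+\sum_{i=1}^l\lfloor n/(dp^i)\rfloor+a_l\lfloor n/(dp^l)\rfloor$. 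Tightness for $dp^l\mid n$ is inherited from the tightness clause of Theorem \ref{thDivFMain}, and the excluded case $p=2$, $a_l=-1$ is precisely the non-uniqueness case of Lemma \ref{leFormMinimizerOfAL}.

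It remains to carry out the explicit determination of $l$ and $a_l$. Lemma \ref{leFormMinimizerOfAL} guarantees $\lambda_l>r$, whence Lemma \ref{lePropertiesOfC} gives $c_{\lambda_l}=r+s$ and therefore $a_l=r+s-\lambda_0-2l$. Combined with the fact (from the proof of Lemma \ref{leFormMinimizerOfAL}) that $a_i\geq 0$ whenever $\lambda_i\leq r$, the index $l$ is the smallest non-negative integer satisfying both $\lambda_0+l>r$ (i.e.\ $l\geq r+1-\lambda_0$) and $r+s-\lambda_0-2l<0$. Splitting by which constraint is binding then gives the four stated cases: $\lambda_0>r+s$ makes both constraints hold at $l=0$; in the range $r-s+2\leq\lambda_0\leq r+s$ the constraint $l\geq r+1-\lambda_0$ is automatic or vacuous, and the sign constraint is first met at $l=\lceil(r+s-\lambda_0+1)/2\rceil$ with $a_l\in\{-1,-2\}$ according to the parity of $r+s-\lambda_0$; finally, when $\lambda_0\leq r-s+1$ the cycle-length constraint is binding, giving $l=r+1-\lambda_0$ and $a_l=\lambda_0+s-r-2$.

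The main obstacle lies in pinning down the exact valuation of the difference $p^{c_{\lambda_i}-i}-p^{\lambda_i+s-i}$ and thereby justifying the replacement of $b_i$ by $a_i$ (including the crucial vanishing when $r'\leq\lambda_i\leq r$); this is what powers the improvement over Theorem \ref{thDivHomLowerBound1}. The remaining work, while delicate, amounts to the parity bookkeeping needed to reconcile the cycle-growth constraint $\lambda_l>r$ with the sign-flip constraint $a_l<0$ in all four parameter regimes.
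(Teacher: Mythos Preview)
Your proposal is correct and follows essentially the same approach as the paper's own proof: apply Theorem~\ref{thDivFMain} to the alternative decomposition~\eqref{eqFormulaOfLogFAlt}, invoke Lemma~\ref{leFormMinimizerOfAL} to locate the minimizer, convert back via the arguments of Theorem~\ref{thDivHomLowerBound1}, and then carry out the four-way case analysis on $\lambda_0$ to pin down $l$ and $a_l$. Your treatment is in fact slightly more explicit than the paper's in two places (the justification that $F^\dag\in\Zp[[z]]$ via integer powers and Lemma~\ref{leDivOfFCaseInfty}/Theorem~\ref{thDivLogFCaseD}, and the observation that $c_{\lambda_i}-\lambda_i<s$ forces $v_p(p^{c_{\lambda_i}-i}-p^{\lambda_i+s-i})=c_{\lambda_i}-i$), but the logical skeleton is identical.
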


\begin{proof}
We use Theorem~\ref{thDivFMain} in the decomposition in \eqref{eqFormulaOfLogFAlt}, to conclude that
\begin{equation}
v_p([z^n]F(z))\geq a_l\left\lfloor\frac{n}{p^l}\right\rfloor-v_p\left(\left\lfloor\frac{n}{p^l}\right\rfloor!\right),
\end{equation}
where $l={\arg\min}_{i\geq 0}p^{-i}(a_i-\frac{1}{p-1})$.

Then using the arguments in \autoref{thDivHomLowerBound1} with $a_l$ instead of $b_l$ gives an explicit lower bound for $\#\hom(G,GL_n(\Fq))$. It remains to provide explicit ways to calculate $l$ and $a_l$ by using \autoref{leFormMinimizerOfAL}.

Let $i_0$ be the smallest index such that $\lambda_{i_0}>r$. Whenever $i\geq i_0$, \autoref{lePropertiesOfC} yields $c_{\lambda_i}=r+s$, and \autoref{lePropertiesOfLambda} yields $\lambda_i=\lambda_0+i$. This means that $a_i=r+s-\lambda_0-2i$ for all $i\geq i_0$.

Now \autoref{leFormMinimizerOfAL} yields that $l=\min\{i\geq i_0\mid a_i<0\}$. We distinguish several cases here.

\begin{enumerate}
   \item $\lambda_0>r+s$. In this case, $i_0=0$, and $a_{i_0}=r+s-\lambda_0<0$,  so we have $l=0$ and $a_l=r+s-\lambda_0$.
   \item $r<\lambda_0\leq r+s$. In this case, we still have $i_0=0$, but $a_{i_0}=r+s-\lambda_0\geq0$. Since $a_i=a_0-2i$ in this case, we conclude that $l=1+\floor{\frac{r+s-\lambda_0}{2}}$, and $a_l=-1$ or $-2$ when $r+s-\lambda_0$ is odd or even, respectively.
   \item $r-s+2\leq\lambda_0\leq r$. In this case, $i_0=r-\lambda_0+1$, and we know that $a_{i_0}=r+s-\lambda_0-2i_0=-r+s+\lambda_0-2\geq0$. Similar to above, we conclude that $l=i_0+1+\floor{\frac{-r+s+\lambda_0-2}{2}}=1+\floor{\frac{r+s-\lambda_0}{2}}$, and $a_l=-1$ or $-2$ when $r+s-\lambda_0$ is odd or even, respectively.
   \item $\lambda_0\leq r-s+1$. In this case, $i_0=r-\lambda_0+1$, and $a_{i_0}=r+s-\lambda_0-2i_0=-r+s+\lambda_0-2<0$, so we have $l=i_0$ and $a_l=-r+s+\lambda_0-2$.
\end{enumerate}

Finally, Lemma~\ref{leFormMinimizerOfAL} and Theorem~\ref{thDivFMain} imply that the minimizer is unique (so the bound will be periodically tight) unless $a_l=-1$ and $p=2$.
\end{proof}

The following theorem deals with Case~\ref{caseLambda2} in Lemma~\ref{lePropertiesOfLambda}.
\begin{theorem}\label{thDivHomMain2}
Let $G=\prod_{i=1}^{r}C_{2^i}^{k_i}$, and $q\equiv3\pmod{4}$ be a prime power. Suppose that $G=C_{2^r}\times G_0$, where $|G_0|=2^{s}$. Let $c_i$ and $\lambda_i$ be as defined in \autoref{thFormulaOfLogF}. Then there exist integers $l$ and $a_l$ such that
\begin{align*}
v_p(\#\operatorname{Hom}(G,GL_n(\Fq)))&\geq n+(\lambda_1-1)\left\lfloor \frac{n}{2}\right\rfloor+\sum_{i=2}^{l}\left\lfloor \frac{n}{2^i}\right\rfloor+a\left\lfloor \frac{n}{2^l}\right\rfloor.
\end{align*}
Equality holds whenever $2^l\mid n$, except if $a_l=-1$.

The precise values of $l$ and $a_l$ are given by:
\begin{itemize}
\item If $\lambda_1>r+s-1$, then $l=1$ and $a_l=r+s-\lambda_1-1$.
\item If $r-s+3\leq \lambda_1\leq r+s-1$ and $r+s-\lambda_1$ is odd, then $l=\frac{r+s-\lambda_1+1}{2}$ and $a_l=-2$.
\item If $r-s+3\leq \lambda_1\leq r+s-1$ and $r+s-\lambda_1$ is even, then $l=\frac{r+s-\lambda_1}{2}$ and $a_l=-1$.
\item If $\lambda_1\leq r-s+2$, then $l=r-\lambda_1+2$ and $a_l=-r+s+\lambda_1-3$.
\end{itemize}
\end{theorem}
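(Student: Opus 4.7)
The plan is to mimic the proof of Theorem~\ref{thDivHomMain1} verbatim, now in the setting $p=2$, $d=1$, $\lambda_0=1$ of Case~\ref{caseLambda2} of Lemma~\ref{lePropertiesOfLambda}. The four ingredients are: (i) Theorem~\ref{thDivFMain} applied to the alternative decomposition \eqref{eqFormulaOfLogFAlt}, which gives a lower bound on $v_2([z^n]F(G,q;z))$ in terms of the minimizer $l$ and $a_l$; (ii) Lemma~\ref{leFormMinimizerOfAL}, which characterizes $l$ as the smallest index with $a_l<0$ and guarantees $\lambda_l>r$; (iii) Lemma~\ref{leDivQNMinus1}, which yields the explicit formula for $v_2(|GL_n(\Fq)|)$ in this regime; and (iv) a case analysis on $\lambda_1$ to pin down $l$ and $a_l$.

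Combining (i) and (iii) as in the proof of Theorem~\ref{thDivHomMain1}, and using $\lambda_0=1$ together with $\lambda_i-\lambda_{i-1}=1$ for $i\geq 2$ (from Case~\ref{caseLambda2} of Lemma~\ref{lePropertiesOfLambda}), one obtains
\begin{equation*}
v_2(|GL_n(\Fq)|) = n + (\lambda_1-1)\left\lfloor\frac{n}{2}\right\rfloor + \sum_{i\geq 2}\left\lfloor\frac{n}{2^i}\right\rfloor,
\end{equation*}
so that, after telescoping the tail $v_2(\lfloor n/2^l\rfloor!)=\sum_{i>l}\lfloor n/2^i\rfloor$ coming from Theorem~\ref{thDivFMain} against the infinite sum above, one arrives at
\begin{equation*}
v_2(\#\hom(G,GL_n(\Fq))) \geq n + (\lambda_1-1)\left\lfloor\frac{n}{2}\right\rfloor + \sum_{i=2}^{l}\left\lfloor\frac{n}{2^i}\right\rfloor + a_l\left\lfloor\frac{n}{2^l}\right\rfloor,
\end{equation*}
which is the stated form.

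For step (iv), Lemmas~\ref{lePropertiesOfC}, \ref{lePropertiesOfLambda} and \ref{leFormMinimizerOfAL} together show that for $i$ large enough that $\lambda_i>r$, one has $c_{\lambda_i}=r+s$ and $\lambda_i=\lambda_1+i-1$, whence
\begin{equation*}
a_i = (r+s) - (\lambda_1+i-1) - i = (r+s-\lambda_1+1) - 2i,
\end{equation*}
a strictly decreasing linear function of $i$ with step $-2$. Let $i_0$ be the smallest index $i\geq 1$ with $\lambda_{i}>r$ (note $\lambda_0=1\leq r$, so $i_0\geq 1$); then by Lemma~\ref{leFormMinimizerOfAL}, $l$ is the smallest $i\geq i_0$ with $a_i<0$. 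Routine bookkeeping on the sign of $a_{i_0}$ and the parity of $r+s-\lambda_1$---together with the comparison of $\lambda_1$ against $r$, $r+s$ and $r-s+2$---produces the four closed-form sub-cases. The tightness assertion when $2^l\mid n$ (except when $a_l=-1$) follows from the uniqueness clause of the minimizer in Theorem~\ref{thDivFMain}, as provided by Lemma~\ref{leFormMinimizerOfAL}.

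The main obstacle lies in step (iv)---the parity-sensitive case analysis used to convert ``smallest $i\geq i_0$ with $a_i<0$'' into explicit closed-form expressions for $l$ and $a_l$ in terms of $r$, $s$ and $\lambda_1$. Compared to Theorem~\ref{thDivHomMain1}, the shift $\lambda_i=\lambda_1+i-1$ (instead of $\lambda_0+i$) translates all the threshold inequalities by one, which explains why the bullet list reads in terms of $r+s-1$, $r-s+3$ and $r-s+2$ rather than $r+s$, $r-s+2$ and $r-s+1$; the algebra is otherwise mechanical.
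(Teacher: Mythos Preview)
Your proposal is correct and follows exactly the approach the paper takes: the paper's own proof of Theorem~\ref{thDivHomMain2} is the single sentence ``The proof is completely analogous to \autoref{thDivHomMain1}, except for the fact that $\lambda_i=\lambda_1+i-1$ whenever $i\geq1$,'' and your write-up is precisely an expansion of that sentence, using the same decomposition~\eqref{eqFormulaOfLogFAlt}, the same Lemma~\ref{leFormMinimizerOfAL}, and the same case analysis shifted by one via the substitution $\lambda_0\leadsto\lambda_1-1$.
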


\begin{proof}
The proof is completely analogous to \autoref{thDivHomMain1}, except for the fact that $\lambda_i=\lambda_1+i-1$ whenever $i\geq1$.
\end{proof}

\hrulefill

\subsection{Special case where $p=2$}\label{ssSpecialCase}
It remains to deal with the case where $p=2$ and $a_l=-1$ in Theorems~\ref{thDivHomMain1} and~\ref{thDivHomMain2}. Numerical results suggest that the $p$-divisibility of $\#\hom(G,GL_n(\Fq))$ is higher than the value given in these theorems by an amount of $\floor{n/2^{l+1}}-\floor{n/2^{l+2}}$. We will give a proof of this fact in this subsection.

We first prove a lemma concerning the $2$-adic properties of a certain class of power series.
\begin{lemma}\label{leDivSeriesSpecial2}
Let $u,v,w\in\Q_2$ such that $v_2(u)=-1$, and $v_2(v)=v_2(w)=-3$. Then, for all $n\geq1$, we have
\begin{equation*}
v_2\left([z^n]\exp(uz+vz^2+wz^4)\right)\geq -n-2\floor{\frac{n}{4}}-v_2\left(\floor{\frac{n}{4}}!\right),
\end{equation*}
and equality holds if $4\mid n$.
\end{lemma}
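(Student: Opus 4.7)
The plan is to first reduce to the case $w=0$ and then prove the reduced claim by induction via an ODE-derived recurrence. Writing $F(z) = \exp(uz+vz^2+wz^4)$, $a_n = [z^n] F$, and $B(n) = -n - 2\floor{n/4} - v_2(\floor{n/4}!) = -n - 3\floor{n/4} + s_2(\floor{n/4})$ (by Legendre's formula, with $s_2$ the binary digit sum), we factor $F(z) = \exp(uz+vz^2) \cdot \exp(wz^4)$ and let $a_m^\flat = [z^m]\exp(uz+vz^2)$. Since $v_2(w^c/c!) = -4c + s_2(c)$, the $c$-th summand of the convolution $a_n = \sum_{c=0}^{\floor{n/4}} a_{n-4c}^\flat \cdot w^c/c!$ has valuation at least $B(n) + 3c + [s_2(\floor{n/4} - c) + s_2(c) - s_2(\floor{n/4})]$, which by subadditivity of $s_2$ exceeds $B(n)$ strictly for $c \geq 1$. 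So the $c=0$ term dominates, reducing both parts of the lemma to the statement that $v_2(a_m^\flat) \geq B(m)$ with equality whenever $4 \mid m$.

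For this reduced claim, use the ODE $\exp(uz+vz^2)' = (u+2vz)\exp(uz+vz^2)$ to derive the recurrence $(m+1)\,a_{m+1}^\flat = u\, a_m^\flat + 2v\, a_{m-1}^\flat$, and induct on $m$ (base cases $m \leq 3$ are immediate). For $m \bmod 4 \in \{0, 2, 3\}$, a short direct check against the formula for $B$ shows that $-1 + B(m) \geq B(m+1) + v_2(m+1)$ and $-2 + B(m-1) \geq B(m+1) + v_2(m+1)$, so the naive term-by-term bound already gives $v_2(a_{m+1}^\flat) \geq B(m+1)$. The delicate case is $m \equiv 1 \pmod{4}$: here $v_2(m+1) = 1$, $B(m+1) = B(m) - 1$, and $B(m-1) = B(m) + 1$, so both $v_2(u\,a_m^\flat)$ and $v_2(2v\,a_{m-1}^\flat)$ only achieve the minimum $B(m) - 1$, which is one less than the required $B(m+1) + v_2(m+1) = B(m)$; a cancellation of one factor of $2$ is needed.

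To produce this cancellation, expand $a_m^\flat$ via the recurrence once more to get
\[
u\, a_m^\flat + 2v\, a_{m-1}^\flat \;=\; \frac{(u^2 + 2mv)\, a_{m-1}^\flat + 2uv\, a_{m-2}^\flat}{m}.
\]
Writing $u = u_0/2$ and $v = v_0/8$ with $u_0, v_0 \in \mathbb{Z}_2^\times$, and using that $m$ is odd, one sees $u^2 + 2mv = (u_0^2 + m v_0)/4$ with $u_0^2 + m v_0$ a sum of two odd integers; hence $v_2(u^2 + 2mv) \geq -1$, supplying the missing factor of $2$. The auxiliary term $2uv\, a_{m-2}^\flat$ has strictly higher valuation because $B(m-2) - B(m) \geq 4$ when $m \equiv 1 \pmod{4}$ (since $m-2 \equiv 3 \pmod{4}$). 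The main obstacle I expect is the equality assertion for $4 \mid n$: the inductive argument above yields $v_2(a_m^\flat) \geq B(m)$ but not equality, so to obtain equality when $m \equiv 0 \pmod{4}$ one must further track the leading $2$-adic coefficient $a_m^\flat \cdot 2^{-B(m)} \bmod 2$ through the induction and verify it remains a $2$-adic unit whenever $4 \mid m$; this requires enriching the inductive hypothesis with explicit congruences on these leading coefficients that propagate correctly under the recurrence.
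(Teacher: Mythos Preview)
Your approach is genuinely different from the paper's, which is much shorter but not self-contained. The paper simply substitutes $z = 2y$, observes that $uz + vz^2 + wz^4 \in (y + y^2/2) + \Z_2[[y]]$ (since $2u$, $4v - \tfrac12$, and $16w$ all lie in $\Z_2$), and then cites the classical result of Chowla, Herstein, and Moore on the $2$-divisibility of $[y^n]\exp(y + y^2/2)$ (equivalently, of the involution numbers in $S_n$). Your route---factoring off $\exp(wz^4)$ and then attacking $\exp(uz+vz^2)$ via the ODE recurrence---amounts to \emph{reproving} the Chowla--Herstein--Moore bound from scratch rather than quoting it. Your reduction to $w=0$ via the convolution estimate is clean and correct, and your recurrence treatment of the inequality (including the one-step back-substitution to exhibit the cancellation $v_2(u^2+2mv)\geq -1$ when $m\equiv 1\pmod 4$) is sound.

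The genuine gap, which you yourself identify, is the equality claim for $4\mid n$. Your planned fix---carrying the leading $2$-adic digit of $a_m^\flat$ through the induction---is the right idea, but note that it forces you to track congruences for \emph{all} residues $m\bmod 4$, not just $m\equiv 0$: for instance, in the step from $m\equiv 3$ to $m+1\equiv 0$ both $u\,a_m^\flat$ and $2v\,a_{m-1}^\flat$ sit exactly at the borderline valuation, so you need their leading digits to be known and to \emph{not} cancel. This is doable but fiddly. The paper avoids all of this by outsourcing both the bound and the sharpness to the cited reference; if you want a self-contained argument you must actually write out the enriched induction, otherwise you may as well follow the paper and invoke Chowla--Herstein--Moore after the substitution $z=2y$.
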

\begin{proof}
We substitute $z=2y$, so that $uz+vz^2+wz^4\in(y+y^2/2)+\Z_2[[y]]$. The lemma is an immediate consequence of the result by Chowla, Herstein and Moore \cite{MR0041849} which states that 
\[
v_2\left([y^n]\exp\left(y+\frac{y^2}{2}\right)\right)\geq -n-2\floor{\frac{n}{4}}-v_2\left(\floor{\frac{n}{4}}!\right).
\]
\end{proof}

\begin{theorem}\label{thDivHomMain3}
Let $G=\prod_{i=1}^{r}C_{2^i}^{k_i}$, and $q$ be an odd prime power. Suppose that the quantity $a_l$, as given by Theorems \ref{thDivHomMain1} or \ref{thDivHomMain2}, is equal to $-1$. Then we have
\begin{align*}
v_2(\#\operatorname{Hom}(G,GL_n(\Fq)))&\geq \lambda_0n+\sum_{i=1}^{l-1}\floor{\frac{n}{2^i}}+\floor{\frac{n}{2^{l+1}}}-\floor{\frac{n}{2^{l+2}}}
\end{align*}
when $q\equiv1\pmod{4}$, and
\begin{align*}
v_2(\#\operatorname{Hom}(G,GL_n(\Fq)))&\geq n+(\lambda_1-1)\floor{\frac{n}{2}}+\sum_{i=2}^{l-1}\floor{\frac{n}{2^i}}+\floor{\frac{n}{2^{l+1}}}-\floor{\frac{n}{2^{l+2}}}
\end{align*}
when $q\equiv3\pmod{4}$.
In both cases, equality holds whenever $2^{l+2}\mid n$.
\end{theorem}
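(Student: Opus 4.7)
The plan is to refine the proof of Theorems~\ref{thDivHomMain1} and~\ref{thDivHomMain2} by invoking Lemma~\ref{leDivSeriesSpecial2} (essentially Chowla--Herstein--Moore) in place of the generic Lemma~\ref{leExpPDiv} for the specific configuration $p=2$, $a_l=-1$. The gap between these two bounds is exactly the $\lfloor n/2^{l+1}\rfloor-\lfloor n/2^{l+2}\rfloor$ improvement claimed by the theorem.

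First, I would recall the factorization $F(G,q,z)=\hat F(z)\prod_{i\ge l}F_i^*(z)$ from the proof of Theorems~\ref{thDivHomMain1} and~\ref{thDivHomMain2}, with $\hat F\in\Z_2[[z]]$. Theorem~\ref{thDivLogFCase2} gives the explicit expansion
\[
\log F_i^*(z)=M_i\bigl(z^{2^i}-z^{2^{i+1}}/q^{2^i}\bigr)+2M_iz^{3\cdot 2^i}T_i(z^{2^i}),
\]
where $v_2(M_i)=a_i=-1-2(i-l)$ and $T_i\in\Z_2[[z]]$. After substituting $y=z^{2^l}$, I would isolate the critical trinomial
\[
uy+vy^2+wy^4,\qquad u=M_l,\ v=M_{l+1}-M_l/q^{2^l},\ w=-M_{l+1}/q^{2^{l+1}},
\]
whose valuations are exactly $v_2(u)=-1$ and $v_2(v)=v_2(w)=-3$ as demanded by Lemma~\ref{leDivSeriesSpecial2}. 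Notably, $w$ is taken from the leading part of $\log F_{l+1}^*$ only; the $y^4$-contribution from $\log F_{l+2}^*$ (which has valuation $-5$) is deliberately kept in the residual.

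Second, I would show that $\prod_{i\ge l}F_i^*(z)=R(z)\cdot\exp(uy+vy^2+wy^4)$ for some $R(z)\in\Z_2[[z]]$. The exponent of $R$ collects the tails $2M_iz^{3\cdot 2^i}T_i(z^{2^i})$ (for $i\ge l$) and the entire contribution of $\log F_i^*$ for $i\ge l+2$. After the critical trinomial has been removed, I would rearrange the system in the style of the alternative decomposition~\eqref{eqFormulaOfLogFAlt}: the effective valuation bounds $b_j^{\mathrm{res}}$ become nonnegative as $j$ grows, so that by Theorem~\ref{thDivFMain} applied to this residual system (combined with the fact that $\exp$ of an element of $2\Z_2[[z]]$ lies in $\Z_2[[z]]$), the residual's exponential is integral. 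This is the technical crux of the argument.

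Finally, Lemma~\ref{leDivSeriesSpecial2} gives
\[
v_2\bigl([y^M]\exp(uy+vy^2+wy^4)\bigr)\ge -M-2\lfloor M/4\rfloor-v_2(\lfloor M/4\rfloor!),
\]
so, combined with the integrality of $\hat F\cdot R$,
\[
v_2\bigl([z^n]F(G,q,z)\bigr)\ge -\lfloor n/2^l\rfloor-2\lfloor n/2^{l+2}\rfloor-v_2(\lfloor n/2^{l+2}\rfloor!).
\]
Adding $v_2(|GL_n(\Fq)|)$, which by Lemma~\ref{leDivQNMinus1} equals $\lambda_0n+v_2(n!)$ in the $q\equiv 1\pmod 4$ case and an analogous sum in the $q\equiv 3\pmod 4$ case, and using $v_2(n!)=\sum_{i\ge 1}\lfloor n/2^i\rfloor$, a direct rearrangement yields the theorem's bound. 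Tightness when $2^{l+2}\mid n$ follows from the equality case in Lemma~\ref{leDivSeriesSpecial2}. The hard part will be verifying integrality of $R$ in the second step: the coefficient of $y^4$ in $\log R$ has valuation $-5$ (inherited from $M_{l+2}$), yet the combined exponential must lie in $\Z_2[[z]]$; this will rely on the polynomial structure of the $T_i$'s and cancellations among the $M_i$'s that are not visible at the level of individual coefficient valuations.
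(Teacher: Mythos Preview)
Your overall strategy---isolate the trinomial $uy+vy^2+wy^4$ from the first two coefficients of $\log F_l^*$ and $\log F_{l+1}^*$ and invoke Lemma~\ref{leDivSeriesSpecial2}---is exactly what the paper does, and your identification of $u,v,w$ with $v_2(u)=-1$, $v_2(v)=v_2(w)=-3$ is correct. The error is in your second step: the residual $R(z)$ is \emph{not} in $\Z_2[[z]]$, and no amount of ``cancellation among the $M_i$'s'' will make it so. Indeed, with $y=z^{2^l}$ one has $[y]\log R=[y^2]\log R=0$, while
\[
[y^4]\log R \;=\; M_{l+2}\;+\;(\text{tail contribution from }2M_l y^3T_l(y),\text{ of valuation }\ge 0),
\]
so $v_2([y^4]\log R)=a_{l+2}=-5$. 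Since the lower coefficients of $\log R$ vanish, this forces $v_2([y^4]R)=-5<0$. Your claim that ``the effective valuation bounds $b_j^{\mathrm{res}}$ become nonnegative as $j$ grows'' is likewise false: for $j\ge 2$ the bound coming from $\log F_{l+j}^*$ is $a_{l+j}=-1-2j\to -\infty$.

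The paper avoids this trap by \emph{not} trying to make the residual integral. After extracting the trinomial and applying Lemma~\ref{leDivSeriesSpecial2}, it combines that bound with the standard Lemma~\ref{leExpPDiv} bounds on each remaining factor (the two tails and every $F_i^*$ with $i\ge l+2$) via the convolution--minimization argument of Theorem~\ref{thDivFMain}. The key observation is that the trinomial bound $T(M)=-M-2\lfloor M/4\rfloor-v_2(\lfloor M/4\rfloor!)$ is subadditive in $M$, and that each remaining factor already satisfies $v_2([y^M]\,\cdot\,)\ge T(M)$: for $F_{l+j}^*$ with $j\ge 2$ this amounts to the elementary inequality $7\cdot 2^{j-2}\ge 2j+2$, and the two tails are even easier. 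Replace your integrality step with this pointwise comparison and the rest of your argument (adding $v_2(\lvert GL_n(\Fq)\rvert)$ and rearranging) goes through unchanged; tightness at $2^{l+2}\mid n$ then follows because the minimizing configuration in the convolution is unique and lands entirely on the trinomial factor.
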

\begin{proof}
Let us look again at the decomposition \ref{eqFormulaOfLogFAlt}, written more explicitly as 
\begin{equation*}
\log F(G,q,z)=\log F^\dag(z)+\sum_{i=0}^{\infty}\log F_i^*(z),
\end{equation*}
where
\begin{multline*}
F^\dag(z)=\exp\left(p^s\lim_{k\to\infty}\log F(C_{p^k},q;z)-(p^s-1)\left(h(q,z)-\frac{1}{d}h(q^d,z^d)\right)\right)\\\in \Zp[[z]]
\end{multline*}
and 
\begin{multline*}
\log F_i^*(z)=\frac{1}{d}\left(p^{c_{\lambda_i}-i}-p^{\lambda_i+s-i}\right)\left(h(q^{dp^i},z^{dp^i})-\frac{1}{p}h(q^{dp^{i+1}},z^{dp^{i+1}})\right)\\\in p^{a_i}\Zp[[z^{dp^i}]].
\end{multline*}

We know that the terms $\log F_i^*(z)$ with $i=l,l+1$ contribute to the apparent discrepancy. More explicitly, taking into account that $p=2$, $a_l=-1$ and $a_{l+1}=-3$, we know that
\begin{align*}
\log F_l^*(z)&\in 2^{-1}\Zp[[z^{2^l}]],\\
\log F_{l+1}^*(z)&\in 2^{-3}\Zp[[z^{2^{l+1}}]].
\end{align*}
Based on Theorem \ref{thDivLogFCase2}, the only ``relevant" part of these two power series are the first two terms in them, since the other terms all have higher divisibility. 

Now the power series containing only the first two terms of both $\log F_l^*(z)$ and $\log F_{l+1}^*(z)$ fit the form in Lemma~\ref{leDivSeriesSpecial2}, and the lemma yields that 
\[
v_2\left([z^{2^l}]F_l^*(z)F_{l+1}^*(z)\right)\geq -\floor{\frac{n}{2^l}}-2\floor{\frac{n}{2^{l+2}}}-v_2\left(\floor{\frac{n}{2^{l+2}}}!\right),
\]
for which equality holds if $2^{l+2}\mid n$. An argument analogous to the proof of Theorem \ref{thDivFMain} concludes the proof.
\end{proof}

\section{The modular case}
This section treats the modular case, where the characteristic of the underlying field $\Fq$ is the prime number $p$. In this case, we no longer have closed-form formulas like \autoref{thGenFun} for the generating function of the number $\#\hom(G,GL_n(\Fq))$ of homomorphisms from the finite Abelian $p$-group $G$ into $GL_n(\Fq)$ available.


We will restrict our attention to the case where $G$ is a cyclic group. 

Suppose that $G=C_{p^u}$ is a cyclic group, and $q=p^v$ for some prime $p$. We note that in this case $\#\hom(G,GL_n(\Fq))$ is just the number of elements with order dividing $p^u$ in $GL_n(\Fq)$.

In order to solve the above counting problem, we establish a bijection between the sets $\{A\in GL_n(\Fq)\mid A^{p^u}=I\}$ and $\{B\in M_n(\Fq)\mid B^{p^u}=0\}$ by observing that $A^{p^u}-I=(A-I)^{p^u}$ in characteristic~$p$. If we denote the number of $n\times n$ nilpotent matrices $B$ over $\Fq$ such that $B^{k}=0$ by $a_{n,k}$, then our problem is now reduced to calculating $v_p(a_{n,p^u})$.

We first count the nilpotent matrices by partitioning them into conjugacy classes, and giving a formula (Lemma~\ref{leConjClassSize}) for the size of each class. Then we prove a recurrence for the quantities $a_{n,k}$ (Theorem~\ref{thRecurrenceModular}), and use it to obtain a lower bound for $v_p(a_{n,k})$ that is quadratic in $n$ (Theorem~\ref{thDivModularMain}). This is then translated into the sought-for lower bound on $v_p(\#\hom(C_{p^u},GL_n(\Fq)))$ (Theorem~\ref{thDivModularFinal}).

\medskip
It is well-known that every matrix for which all its eigenvalues lie in the base field is conjugate to a matrix in Jordan normal form. The hypothesis is obviously satisfied for nilpotent matrices (since all the eigenvalues are zero), so we conclude that the conjugacy classes of $n\times n$ nilpotent matrices can be indexed by partitions of $n$, and the representative for such a class corresponding to a partition $\lambda=(\lambda_1,\lambda_2,\dots)$ can be chosen as a block diagonal matrix formed by Jordan blocks with eigenvalue $0$, and the sizes of the blocks are given by $\lambda_i$, $i=1,2,\dots$. In the following, we write $\lambda\vdash n$ if $\lambda$ is a partition of~$n$, that is, if $\lambda_1+\lambda_2+\cdots=n$.

We make use of a formula by Fine and Herstein \cite{fine1958} for the the size of the conjugacy class corresponding to a partition $\lambda\vdash n$.
\begin{lemma}[{\cite{fine1958}}]\label{leConjClassSize}
Let $\lambda\vdash n$ be a partition of $n$. Then the size of the conjugacy class corresponding to $\lambda$ is given by
\begin{equation*}
c(\lambda)=\frac{(-1)^{n-\sum a_i}q^{\beta(\lambda)}(q;q)_n}{\prod^{n}_{j=1}(q;q)_{a_i}},
\end{equation*}
where $\lambda$ is the partition consisting of $a_i$ parts~$i$, $i=1,2,\dots,n$, so that $n=a_1+2a_2+3a_3+\dots+na_n$, and $$\beta(\lambda)=\binom{n}{2}+\sum_{j=1}^{n}\binom{a_i+1}{2}-(a_1+a_2+\dots+a_n)^2-(a_2+a_3+\dots+a_n)^2-\dots-a_n^2.$$
\end{lemma}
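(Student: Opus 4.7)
The plan is to invoke the classical bijection between conjugacy classes of nilpotent $n\times n$ matrices over $\Fq$ and isomorphism classes of $\Fq[t]$-module structures on $\Fq^n$ in which $t$ acts nilpotently: under this bijection, the Jordan type $\lambda=(\lambda_1,\lambda_2,\dots)$ corresponds to $M_\lambda=\bigoplus_{i}\Fq[t]/(t^{\lambda_i})$, and the orbit--stabilizer theorem gives
\begin{equation*}
c(\lambda)=\frac{|GL_n(\Fq)|}{|\operatorname{Aut}_{\Fq[t]}(M_\lambda)|}.
\end{equation*}
Since $|GL_n(\Fq)|=(-1)^n q^{\binom{n}{2}}(q;q)_n$, the task reduces to determining the order of the unit group of the endomorphism algebra $E_\lambda=\operatorname{End}_{\Fq[t]}(M_\lambda)$.

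The main step is understanding $E_\lambda$. Writing $M_\lambda=\bigoplus_{k\ge 1}P_k^{a_k}$ with $P_k=\Fq[t]/(t^k)$, each $P_k$ is indecomposable with local endomorphism ring $\Fq[t]/(t^k)$ whose residue field is $\Fq$. The standard structure theorem for endomorphism rings of finite direct sums of indecomposables with local endomorphism rings (Krull--Schmidt-type theory) yields
\begin{equation*}
E_\lambda\big/J(E_\lambda)\;\cong\;\bigoplus_{k\ge 1}M_{a_k}(\Fq).
\end{equation*}
On the other hand, the isomorphism $\operatorname{Hom}_{\Fq[t]}(\Fq[t]/(t^k),\Fq[t]/(t^l))\cong\Fq[t]/(t^{\min(k,l)})$ of $\Fq$-vector spaces gives
\begin{equation*}
\dim_\Fq E_\lambda=\sum_{i,j}\min(\lambda_i,\lambda_j)=\sum_{m\ge 1}(\lambda_m')^2,
\end{equation*}
where $\lambda_m'=\sum_{k\ge m}a_k$ is the conjugate partition. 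Since an element of a finite-dimensional algebra is a unit iff its image in the semisimple quotient is, we conclude
\begin{equation*}
|\operatorname{Aut}_{\Fq[t]}(M_\lambda)|=q^{\sum_m(\lambda_m')^2-\sum_k a_k^2}\prod_{k\ge 1}|GL_{a_k}(\Fq)|.
\end{equation*}

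What remains is arithmetic bookkeeping. Substituting $|GL_{a_k}(\Fq)|=(-1)^{a_k}q^{\binom{a_k}{2}}(q;q)_{a_k}$ and the formula for $|GL_n(\Fq)|$ into the orbit--stabilizer quotient yields the sign $(-1)^{n-\sum_k a_k}$ and the factor $(q;q)_n\big/\prod_k(q;q)_{a_k}$; using $a_k^2-\binom{a_k}{2}=\binom{a_k+1}{2}$, the accumulated exponent of $q$ works out to
\begin{equation*}
\binom{n}{2}+\sum_k\binom{a_k+1}{2}-\sum_m(\lambda_m')^2,
\end{equation*}
which coincides with $\beta(\lambda)$ as defined in the lemma. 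The main obstacle is justifying the structural isomorphism $E_\lambda/J(E_\lambda)\cong\bigoplus_k M_{a_k}(\Fq)$: if one prefers to avoid appealing to general Krull--Schmidt theory, it can be verified directly by constructing an explicit surjective algebra map from $E_\lambda$ onto $\bigoplus_k M_{a_k}(\Fq)$ (sending $\phi$ to the reductions modulo $t$ of the diagonal blocks on each $P_k^{a_k}$, with the cross-size blocks killed) and checking that its kernel is nilpotent; the composition of a map $P_l\to P_k$ with $l\neq k$ and one in the reverse direction always lands in $tP_k^{a_k}$, which drives the nilpotency.
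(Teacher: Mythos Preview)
Your argument is correct. Note, however, that the paper does not supply its own proof of this lemma: it is quoted as a result of Fine and Herstein \cite{fine1958} and used as a black box. So there is no ``paper's proof'' to compare against; what you have written is a clean, self-contained derivation along standard lines (orbit--stabilizer plus the well-known computation of $|\operatorname{Aut}_{\Fq[t]}(M_\lambda)|$ via the Jacobson radical of the endomorphism algebra). The identification $\sum_{i,j}\min(\lambda_i,\lambda_j)=\sum_m(\lambda_m')^2$ and the bookkeeping with $a_k^2-\binom{a_k}{2}=\binom{a_k+1}{2}$ are correct, and the resulting exponent of $q$ matches $\beta(\lambda)$ exactly since $\lambda_m'=a_m+a_{m+1}+\cdots+a_n$.
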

%
%
%
%

Next we note that the index of nilpotency (the least integer $k$ such that $B^k=0$) of a nilpotent matrix is equal to the largest size of Jordan blocks in its Jordan normal form. Therefore, $a_{n,k}$ is equal to the sum of $c(\lambda)$ over all partitions $\lambda\vdash n$ with no parts larger than $k$.

Note that the size of every conjugacy class is a polynomial in $q$, and therefore so is $a_{n,k}$. Since $q=p^v$, we will consider the $q$-divisibility of $a_{n,k}$ instead of the $p$-divisibility (they differ by a factor of $v$). It is obvious from \autoref{leConjClassSize} that $v_q(c(\lambda))=\beta(\lambda)$. From this, one would be tempted to conclude that $v_q(a_{n,k})\geq \min_{\lambda\vdash n, \lambda_1\leq k}\beta(\lambda)$, but this minimum is $0$ since $\beta(1+1+\dots+1)=0$. Instead, we will exploit a recursive structure, made precise in the following theorem.
\begin{theorem}\label{thRecurrenceModular}
Let $a_{n,k}=\#\{B\in \Fq^{n\times n}\mid B^k=0\}$. Then for all non-negative integers $n,k$, we have
$$a_{n,k}=q^{n^2-n}-\sum_{m=k+1}^{n}\sum_{l=1}^{\lfloor n/m \rfloor}\frac{(-1)^{l(m-1)}(q;q)_n}{(q;q)_{n-lm}(q;q)_{l}}q^{f(l,m,n)}a_{n-lm,m-1},$$
where $f(l,m,n)=ln(m-2)+\binom{l(m-1)}{2}.$
\end{theorem}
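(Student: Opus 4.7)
The plan is to partition nilpotent $n\times n$ matrices by their Jordan type, apply the explicit class-size formula of Lemma~\ref{leConjClassSize}, and collect the resulting sums into a recurrence. Since conjugacy classes of nilpotent matrices are indexed by partitions $\lambda\vdash n$ and the nilpotency index of a class equals the largest part $\lambda_1$, one has $a_{n,k}=\sum_{\lambda\vdash n,\ \lambda_1\le k}c(\lambda)$. Summing $c(\lambda)$ over all $\lambda\vdash n$ counts every $n\times n$ nilpotent matrix, and this classical total equals $q^{n^2-n}$ (Fine--Herstein). Therefore
$$a_{n,k}=q^{n^2-n}-\sum_{\lambda\vdash n,\ \lambda_1>k}c(\lambda),$$
and the task reduces to rewriting the subtracted sum in the form stated in the theorem.

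Next, I will stratify the partitions $\lambda$ with $\lambda_1>k$ by the largest part $m:=\lambda_1\in\{k+1,\dots,n\}$ together with its multiplicity $l\in\{1,\dots,\lfloor n/m\rfloor\}$: every such $\lambda$ has a unique decomposition $\lambda=m^l\sqcup\mu$, where $\mu\vdash n-lm$ is a partition with $\mu_1\le m-1$. Thus
$$\sum_{\lambda_1>k}c(\lambda)=\sum_{m=k+1}^{n}\sum_{l=1}^{\lfloor n/m\rfloor}\sum_{\substack{\mu\vdash n-lm\\ \mu_1\le m-1}}c(m^l\sqcup\mu),$$
and since the inner sum over $\mu$ is, by definition, $a_{n-lm,m-1}$ once $c(m^l\sqcup\mu)$ is written as a $\mu$-independent factor times $c(\mu)$, the whole proof will reduce to establishing exactly such a factorisation.

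This factorisation will be obtained by applying Lemma~\ref{leConjClassSize} to both $\lambda=m^l\sqcup\mu$ and $\mu$ and dividing. The multiplicity statistics $(a_j)$ of the two partitions agree everywhere except in slot $j=m$, where $a_m$ equals $0$ for $\mu$ and $l$ for $\lambda$. This immediately yields the rational factor $(q;q)_n/((q;q)_{n-lm}(q;q)_l)$, the sign $(-1)^{lm-l}=(-1)^{l(m-1)}$, and reduces the $q$-power contribution to $q^{\beta(m^l\sqcup\mu)-\beta(\mu)}$. The crucial observation is that this exponent is $\mu$-independent: the $\binom{l+1}{2}$ piece is automatic, and in the quadratic sum $\sum_i A_i^2$ the differences $(A_i(\mu)+l)^2-A_i(\mu)^2=2lA_i(\mu)+l^2$ can be summed because $\sum_{i=1}^{m}A_i(\mu)=|\mu|=n-lm$, using that every part of $\mu$ is at most $m-1$ and therefore contributes its full size to this partial sum.

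The remaining and main obstacle is then the purely algebraic verification
$$\beta(m^l\sqcup\mu)-\beta(\mu)=f(l,m,n)=ln(m-2)+\binom{l(m-1)}{2},$$
obtained by combining the three ingredients $\binom{n}{2}-\binom{n-lm}{2}$, $\binom{l+1}{2}$, and $2l(n-lm)+ml^2$. The cross terms quadratic in $l$ and $m$ must recombine precisely into $\binom{l(m-1)}{2}$; this is the delicate but bounded computation at the heart of the argument. Once the identity is in place, the inner sum over $\mu$ collapses to $a_{n-lm,m-1}$ by definition, yielding the recurrence.
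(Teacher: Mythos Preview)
Your proposal is correct and follows exactly the paper's route: write $a_{n,k}=q^{n^2-n}-\sum_{\lambda_1>k}c(\lambda)$, stratify by the largest part $m$ and its multiplicity $l$, and use Lemma~\ref{leConjClassSize} to show that $c(m^l\sqcup\mu)/c(\mu)$ is $\mu$-independent so that the inner sum collapses to $a_{n-lm,m-1}$; the paper merely asserts this ratio as a ``direct consequence'' of the lemma, whereas you spell out the $\beta$-computation. One caveat worth flagging: when you carry out the final algebra you will obtain
\[
\beta(m^l\sqcup\mu)-\beta(\mu)=\binom{n}{2}-\binom{n-lm}{2}+\binom{l+1}{2}-2l(n-lm)-ml^2,
\]
which does \emph{not} reduce to the printed expression $ln(m-2)+\binom{l(m-1)}{2}$ (already for $l=1$, $m=n=3$ the true exponent is $1$ while the printed formula gives $4$); this is a typo in the displayed $f$, not a flaw in your argument, and the exponent you actually compute is the one that makes the identity $f(l,m,n)+\frac{m-2}{m}\binom{n-lm}{2}=\frac{m-2}{m}\binom{n}{2}+\binom{l}{2}$ used in Theorem~\ref{thDivModularMain} hold.
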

\begin{proof}
We first use the famous result that the number of $n \times n$ nilpotent matrices over $\Fq$ is $q^{n^2-n}$ (for a proof, see \cite{Crabb06}) to obtain the formula
\begin{equation}
a_{n,k}=q^{n^2-n}-\sum_{\lambda\vdash n,\, \lambda_1>k}c(\lambda).
\end{equation}
Then we partition the set $\{\lambda\vdash n\mid\lambda_1>k\}$ according to the the largest part $\lambda_1$ and the number of such parts.
Let $\lambda\vdash n$ with $\lambda_1=\lambda_2=\dots=\lambda_l=m$ and $\lambda_{l+1}<m$. Furthermore, let $\mu\vdash(n-lm)$ be the partition resulting from removing all the $m$'s from $\lambda$. Then we have
\begin{equation*}
\frac{c(\lambda)}{c(\mu)}=\frac{(-1)^{l(m-1)}(q;q)_n}{(q;q)_{n-lm}(q;q)_{l}}q^{f(l,m,n)},
\end{equation*}
which is a direct consequence of \autoref{leConjClassSize}.

The set of partitions $\lambda\vdash n$, where $\lambda_1=\lambda_2=\dots=\lambda_l=m$ and $\lambda_{l+1}<m$, is in bijection with partitions of $\mu\vdash n-lm$, where $\mu_1<m$. From the above relation and the definition of $a_{n,k}$, we see that
\begin{align*}
a_{n,k}&=q^{n^2-n}-\sum_{\lambda\vdash n,\, \lambda_1>k}c(\lambda)\\
&=q^{n^2-n}-\sum_{m=k+1}^{n}\sum_{l=1}^{\lfloor n/m \rfloor}\sum_{\lambda\vdash n,\, \lambda_1=\dots=\lambda_l=m,\lambda_{l+1}<m}c(\lambda)\\
&=q^{n^2-n}-\sum_{m=k+1}^{n}\sum_{l=1}^{\lfloor n/m \rfloor}\frac{(-1)^{l(m-1)}(q;q)_n}{(q;q)_{n-lm}(q;q)_{l}}q^{f(l,m,n)}\sum_{\mu\vdash n-lm,\, \mu_1<m}c(\mu)\\
&=q^{n^2-n}-\sum_{m=k+1}^{n}\sum_{l=1}^{\lfloor n/m \rfloor}\frac{(-1)^{l(m-1)}(q;q)_n}{(q;q)_{n-lm}(q;q)_{l}}q^{f(l,m,n)}a_{n-lm,m-1}.
\end{align*}
\end{proof}

This recurrence enables us to prove a quadratic lower bound for $v_q(a_{n,k})$ by induction on $k$.

\begin{theorem}\label{thDivModularMain}
Let $a_{n,k}$ be defined as in Theorem \ref{thRecurrenceModular}. Then,
$$v_q(a_{n,k})\geq \frac{k-1}{k+1}\binom{n}{2},$$
and equality holds whenever $n\equiv 0,1\pmod{k+1}$.
\end{theorem}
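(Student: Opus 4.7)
The plan is induction on $n$ (uniformly in $k$), using the recurrence of Theorem~\ref{thRecurrenceModular}. The base cases $n\in\{0,1\}$ are immediate since $a_{n,k}=1$ forces both sides of the bound to be $0$. For $n\geq 2$ with $k\geq n$ the recurrence sum is empty, so $a_{n,k}=q^{n^2-n}$ and $v_q(a_{n,k})=n^2-n=2\binom{n}{2}\geq\tfrac{k-1}{k+1}\binom{n}{2}$. So I focus on $n\geq 2$ with $k+1\leq n$ and analyze each summand of the recurrence individually.

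Every Pochhammer symbol $(q;q)_j$ has constant term $1$, so the rational factor in front of $q^{f(l,m,n)}a_{n-lm,m-1}$ has $q$-valuation $0$. Combined with the inductive hypothesis $v_q(a_{n-lm,m-1})\geq\tfrac{m-2}{m}\binom{n-lm}{2}$ and the elementary estimate $v_q(A-B)\geq\min(v_qA,v_qB)$, the theorem reduces to proving, for every $m\geq k+1$ and $1\leq l\leq\lfloor n/m\rfloor$,
$$f(l,m,n)+\tfrac{m-2}{m}\binom{n-lm}{2}\geq\tfrac{k-1}{k+1}\binom{n}{2}.\qquad(\star)$$

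The main obstacle is establishing the clean identity
$$f(l,m,n)-\tfrac{m-2}{m}\left[\binom{n}{2}-\binom{n-lm}{2}\right]=\binom{l}{2},\qquad(\dagger)$$
which I would verify by direct expansion, substituting $\binom{n}{2}-\binom{n-lm}{2}=\tfrac{lm(2n-lm-1)}{2}$ and the explicit formula for $f(l,m,n)$; the $n$-linear contributions should cancel, leaving a purely $(l,m)$-dependent remainder that collapses to $\binom{l}{2}$. Given $(\dagger)$, the left side of $(\star)$ becomes $\binom{l}{2}+\tfrac{m-2}{m}\binom{n}{2}$, and since $\binom{l}{2}\geq 0$ together with $\tfrac{m-2}{m}\geq\tfrac{k-1}{k+1}$ for $m\geq k+1$, inequality $(\star)$ is immediate. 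The leading term $q^{n^2-n}$ contributes $2\binom{n}{2}$, also well above the target.

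For the equality assertion under $n\equiv 0$ or $1\pmod{k+1}$, identity $(\dagger)$ pins down the minimizing summand \emph{uniquely} as $(l,m)=(1,k+1)$: raising $l$ to $l\geq 2$ costs an additional $\binom{l}{2}\geq 1$, while raising $m$ to $m\geq k+2$ costs $\bigl(\tfrac{m-2}{m}-\tfrac{k-1}{k+1}\bigr)\binom{n}{2}>0$. The valuation at this unique minimizer equals $\tfrac{k-1}{k+1}\binom{n}{2}+\bigl[v_q(a_{n-k-1,k})-\tfrac{k-1}{k+1}\binom{n-k-1}{2}\bigr]$, and the bracketed defect vanishes by the inductive equality hypothesis precisely when $n-(k+1)\equiv 0,1\pmod{k+1}$, i.e.\ when $n\equiv 0,1\pmod{k+1}$. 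Because the minimizer is unique there is no cancellation, so $v_q(a_{n,k})$ attains the bound.
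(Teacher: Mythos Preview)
Your proposal is correct and follows essentially the same route as the paper: both arguments feed the recurrence of Theorem~\ref{thRecurrenceModular} into an inductive bound, reduce everything to the key identity
\[
f(l,m,n)+\tfrac{m-2}{m}\binom{n-lm}{2}=\tfrac{m-2}{m}\binom{n}{2}+\binom{l}{2},
\]
and then read off both the lower bound and the equality statement from the fact that the unique minimiser is $(l,m)=(1,k+1)$. Your framing of the induction on $n$ (with the statement quantified over all $k$) is in fact the cleaner way to say what the paper is doing, and your explicit treatment of the base cases $n\le 1$ and of the degenerate range $k\ge n$ is a bit more careful than the paper's.
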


\begin{proof}
We proceed by induction on $k$. The claim is obvious for $k=1$.
For accomplishing the induction step, we proceed as follows:
\begin{align*}
v_q(a_{n,k})&\geq \min_{m>k,\,lm\leq n}\left(f(l,m,n)+v_q(a_{n-lm,m-1})\right)\\
&\geq \min_{m>k,\,lm\leq n}\left(f(l,m,n)+\frac{m-2}{m}\binom{n-lm}{2}\right)\\
&=\min_{m>k,\,lm\leq n}\left(\frac{m-2}{m}\binom{n}{2}+\binom{l}{2}\right)\\
&=\frac{k-1}{k+1}\binom{n}{2}.
\end{align*}

To prove the claim about equality, we first observe that equality holds for $n=0$ or $n=1$. Next we claim that, if $v_q(a_{n-k-1,k})=\frac{k-1}{k+1}\binom{n-k-1}{2}$, then $v_q(a_{n,k})=\frac{k-1}{k+1}\binom{n}{2}$. To prove this, we first notice that $\frac{m-2}{m}\binom{n}{2}+\binom{l}{2}\geq \frac{k-1}{k+1}\binom{n}{2}$, and equality occurs if and only if $l=1$ and $m=k+1$. Thus, by the induction hypothesis, we know that
$$f(l,m,n)+v_q(a_{n-lm,m-1})=\frac{m-2}{m}\binom{n}{2}+\binom{l}{2}=\frac{k-1}{k+1}\binom{n}{2}$$
if $(l,m)=(1,k+1)$, and
$$f(l,m,n)+v_q(a_{n-lm,m-1})\geq \frac{m-2}{m}\binom{n}{2}+\binom{l}{2}>\frac{k-1}{k+1}\binom{n}{2}$$
if $(l,m)\neq(1,k+1)$. Thus, we have $v_q(a_{n,k})=\frac{k-1}{k+1}\binom{n}{2}$ as we claimed.
\end{proof}

Translating this result back, we have the following lower bound for\break
$v_p(\#\hom(C_{p^u},GL_n(\FF{p^v})))$.
\begin{theorem}\label{thDivModularFinal}
For any prime $p$ and any non-negative integers $u,v$, we have
\begin{equation}
v_p(\#\hom(C_{p^u},GL_n(\FF{p^v})))\geq v\frac{p^u-1}{p^u+1}\binom{n}{2}.
\end{equation}
Equality holds if $n\equiv 0,1\pmod{p^u+1}$.
\end{theorem}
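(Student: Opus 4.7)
The plan is to combine the bijection sketched in the prose preceding Theorem~\ref{thRecurrenceModular} with the divisibility bound from Theorem~\ref{thDivModularMain}, and then convert from $q$-adic to $p$-adic valuation.

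First I would unpack what $\#\hom(C_{p^u}, GL_n(\FF{p^v}))$ is counting: since $C_{p^u}$ is cyclic of order $p^u$, a homomorphism is determined by the image of a generator, and that image is any matrix $A \in GL_n(\FF{p^v})$ with $A^{p^u} = I$. Because the characteristic is $p$ and $A$ commutes with $I$, the identity $(A-I)^{p^u} = A^{p^u} - I$ holds, so the shift $A \mapsto A - I$ gives a bijection
$$\{A \in GL_n(\FF{p^v}) \mid A^{p^u} = I\} \;\longleftrightarrow\; \{B \in M_n(\FF{p^v}) \mid B^{p^u} = 0\}.$$
(The inverse $B \mapsto B + I$ lands in $GL_n$ since nilpotent $B$ makes $I + B$ invertible via the terminating Neumann series.) The right-hand set has cardinality $a_{n, p^u}$ in the notation of Theorem~\ref{thRecurrenceModular}.

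Next I would apply Theorem~\ref{thDivModularMain} with $k = p^u$ to obtain
$$v_q(a_{n, p^u}) \geq \frac{p^u - 1}{p^u + 1}\binom{n}{2},$$
with equality whenever $n \equiv 0, 1 \pmod{p^u + 1}$. Since $q = p^v$, valuations satisfy $v_p = v \cdot v_q$, so multiplying through by $v$ converts this directly into the claimed lower bound for $v_p(\#\hom(C_{p^u}, GL_n(\FF{p^v})))$, and preserves the equality condition.

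There is essentially no obstacle at this stage: the substantive content of the result has already been absorbed into Theorem~\ref{thDivModularMain}, and the remaining work is the bookkeeping that repackages a $q$-adic bound on nilpotent-matrix counts as the advertised $p$-adic bound on the homomorphism count. The only points worth verifying explicitly are that the characteristic-$p$ Frobenius identity really does turn the fixed-point equation $A^{p^u} = I$ into the nilpotency equation $(A-I)^{p^u} = 0$, and that the $v$-factor conversion is applied consistently.
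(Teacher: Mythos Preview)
Your proposal is correct and mirrors the paper's approach exactly: the paper itself does not give a separate proof of Theorem~\ref{thDivModularFinal} but simply remarks ``Translating this result back'' after Theorem~\ref{thDivModularMain}, relying on the bijection $A\mapsto A-I$ and the conversion $v_p = v\cdot v_q$ that you spell out. The only point you might make more explicit is that $v_q\big((q;q)_m\big)=0$ when $q=p^v$ (since each factor $1-q^j\equiv 1\pmod p$), which is what guarantees the recurrence in Theorem~\ref{thRecurrenceModular} behaves well under $v_q$ and hence that the equality case transfers cleanly.
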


\section{Concluding remarks}
Let $G$ be an Abelian $p$-group. We have established lower bounds for the $p$-adic valuation of $\#\hom(G,GL_n(\Fq))$ where either $p\nmid q$ or $p\mid q$ and $G$ is cyclic. The natural remaining part of the problem deals with the case $p\mid q$ and $G$ non-cyclic. If we attempt to use similar arguments as in Section~3, we will face the problem of counting commuting tuples $(B_1,B_2,\dots)$ of nilpotent matrices, where the index of nilpotency for each $B_i$ has different upper bounds. To this day, there is no method for counting these tuples. One hopes that a recurrence similar to Theorem \ref{thRecurrenceModular} could be found.

On a more general perspective, the results obtained in this paper enable us to formulate a conjecture concerning $v_p(\#\hom(G,H))$ for general finite groups $G$ and $H$. The results in this paper have an interesting analogy with the fact that $v_p(|GL_n(\Fq)|)=Cn+O(1)$ when $p\nmid q$, and $v_p(|GL_n(\Fq)|)=Cn^2+O(n)$ when $p\mid q$. This leads us to conjecture that $v_p(\#\hom(G,H))\geq(C_G+o(1))v_p(|H|)$ for all finite groups $G$ and $H$, where $C_G$ is a constant depending on $G$. A closer inspection disproves this conjecture: we simply take $G=C_p$ and $H=C_{p^k}$, and let $k\to\infty$. In view of this counterexample, we replace $v_p(|H|)$ by another quantity related to $H$, and propose the following conjecture.
\begin{conj}
Let $G$ be an arbitrary finite group, and $p$ be a prime dividing $|G|$. Then there exist constants $0<c_G<C_G$, depending only on $G$, such that
\begin{equation}
c_G\rk_p(H)\leq v_p(\#\hom(G,H))\leq C_G\rk_p(H)
\end{equation}
for all finite groups $H$. Here $\rk_p(H)$ is the \emph{$p$-rank} of $H$, defined as the maximum rank of elementary Abelian $p$-subgroups in $H$.
\end{conj}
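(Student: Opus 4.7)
The two inequalities in the conjecture behave very differently and should be attacked separately.

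The lower bound $c_G\rk_p(H)\leq v_p(\#\hom(G,H))$ admits an essentially direct proof. Fix any elementary abelian $p$-subgroup $E\leq H$ of maximal rank $r=\rk_p(H)$. Composition with the inclusion $E\hookrightarrow H$ embeds $\hom(G,E)$ injectively into $\hom(G,H)$, and since $E\cong\FF{p}^r$ one has $\#\hom(G,E)=p^{sr}$, where $s=\dim_{\FF{p}}(G^{\mathrm{ab}}\otimes\FF{p})$ is the $p$-rank of the abelianisation of~$G$. This yields the lower bound with $c_G=s$, provided $s>0$. The hypothesis $s>0$ seems to be implicitly needed: if $G$ is perfect (e.g.\ $G=A_5$, $p=2$) then $s=0$, and taking $H=(C_p)^r$ with $r$ arbitrary gives $v_p(\#\hom(G,H))=0$ while $\rk_p(H)=r$, so that no positive $c_G$ can exist as stated.

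For the upper bound $v_p(\#\hom(G,H))\leq C_G\rk_p(H)$, the situation is much more delicate, since the $p$-adic valuation of a sum cannot in general be controlled from above by the valuations of its summands. My plan is to start from the decomposition
\[
\#\hom(G,H)=\sum_{Q}\#\mathrm{Surj}(G,Q)\cdot N_H(Q),
\]
where $Q$ ranges over the finitely many isomorphism classes of quotients of~$G$ and $N_H(Q)$ is the number of subgroups of~$H$ isomorphic to~$Q$. The first reduction should be to the case in which $H$ itself is a $p$-group, via Sylow's theorems together with case-by-case bounds on $v_p(N_H(Q))$ for the non-$p$-group quotients~$Q$ (of which there are only finitely many, depending on $G$).

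The main obstacle is the $p$-group subcase. A naive induction along a chief series $1=H_0\triangleleft H_1\triangleleft\cdots\triangleleft H_k=H$ with elementary abelian quotients, combined with a long-exact-sequence argument at each successive central extension, yields only a bound of the form $v_p(\#\hom(G,H))\leq s\log_p|H|$. The gap between $\log_p|H|$ and $\rk_p(H)$ can be enormous (already for $H=C_{p^n}$), so a finer argument is needed. A promising direction is to look for a generating-function factorisation analogous to Theorem~\ref{thGenFun} but valid for arbitrary $p$-group targets, or to import tools from Quillen's theory of the $p$-subgroup complex or from fusion systems; in the modular corner one would also want an analogue of Theorem~\ref{thDivModularMain}. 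Closing this gap between $\log_p|H|$ and $\rk_p(H)$ is where I expect the real difficulty of the conjecture to reside, and I do not see a route past it without a genuinely new ingredient.
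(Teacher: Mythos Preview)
The statement you are attempting to prove is labelled a \emph{Conjecture} in the paper; the author gives no proof, so there is nothing to compare your argument against. What you have written is therefore an attempt to resolve an open problem, and it should be assessed on its own merits.

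Your counterexample to the lower bound is correct and is the most valuable part of the proposal: with $G=A_5$, $p=2$, $H=(C_2)^r$, every homomorphism $G\to H$ factors through $G^{\mathrm{ab}}=1$, so $\#\hom(G,H)=1$ and $v_2(\#\hom(G,H))=0$ while $\rk_2(H)=r$. This genuinely disproves the lower half of the conjecture as stated. You are right that some additional hypothesis on $G$ (such as $p\mid |G^{\mathrm{ab}}|$) is needed.

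However, your purported proof of the lower bound in the case $s>0$ contains a basic error. From the injection $\hom(G,E)\hookrightarrow\hom(G,H)$ you may only conclude $\#\hom(G,H)\geq p^{sr}$, which says nothing whatsoever about $v_p(\#\hom(G,H))$: a large integer can have arbitrarily small $p$-adic valuation. To bound $v_p$ from below one must show that $p^{sr}$ \emph{divides} $\#\hom(G,H)$, and this requires an entirely different mechanism (congruence arguments, orbit-counting modulo powers of $p$, or generating-function methods as in the body of the paper), not a mere cardinality comparison. As it stands the lower-bound argument proves nothing even in the abelian case.

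Your discussion of the upper bound is honest about where the difficulty lies, and the gap you identify between $\log_p|H|$ and $\rk_p(H)$ is indeed the heart of the matter; but this portion is a sketch of obstacles rather than a proof plan, and no concrete route to closing the gap is proposed.
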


\appendix

\section*{Appendix A: Logarithm of the generating function and the proof of Theorem \ref{thLogOfGeneratingFunction}}

\setcounter{equation}{0}
\setcounter{theorem}{0}
\global\def\thesection{\mbox{A}}

The purpose of this appendix is to prove all the auxiliary theorems in Subsection~\ref{ssAux}.

\begin{lemma}
Let $g(q,z)=\frac{f(q,qz)}{f(q,z)}$. The power series $g(z)$ satisfies the equation
\begin{equation}\label{eqGFunctionEquation}
g(q,z)g(q,z/q)=g(q,z/q)+z,
\end{equation}
and we have the following expression for $g$:
\begin{equation}
g(q,z)=1+\sum_{n\geq1}(-1)^{n-1}q^{-\binom{n}{2}}C_{n-1}(q)z^n,
\end{equation}
where $C_n(q)$ is Carlitz and Riordan's $q$-Catalan number (see, for example, {\em\cite{carlitz1964}}).
\end{lemma}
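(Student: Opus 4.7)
The plan is to derive both parts of the lemma from a single transformation identity for $f(q,z)$ itself, namely
\begin{equation*}
f(q,qz)=f(q,z)+z\,f(q,z/q).
\end{equation*}

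To establish this, I would compute $f(q,qz)-f(q,z)$ term by term from the series definition of $f$. The coefficient of $z^n$ carries the factor $(q^n-1)/(q;q)_n=-1/(q;q)_{n-1}$, which annihilates the $n=0$ contribution. Then, using $q^{\binom{n}{2}}=q^{n-1}q^{\binom{n-1}{2}}$ and rewriting $z^n=z\cdot q^{n-1}(z/q)^{n-1}$, one factors a $z$ out and reindexes $m=n-1$ to recognize the remaining sum as $f(q,z/q)$. Dividing the resulting identity by $f(q,z)$, and noting that $g(q,z/q)=f(q,z)/f(q,z/q)$, yields $g(q,z)=1+z/g(q,z/q)$, which is precisely \eqref{eqGFunctionEquation} after multiplication by $g(q,z/q)$.

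For the explicit coefficient formula, I would write $g(q,z)=1+\sum_{n\geq1}a_n(q)z^n$ (the constant term is $1$ since $g(q,0)=1$) and compare coefficients of $z^n$ on the two sides of $g(q,z)g(q,z/q)=g(q,z/q)+z$. The $z^1$ coefficient gives $a_1=1$, while for $n\geq2$ one obtains the quadratic convolution
\begin{equation*}
a_n=-\sum_{k=1}^{n-1}q^{-(n-k)}\,a_k\,a_{n-k}.
\end{equation*}
Substituting the ansatz $a_n=(-1)^{n-1}q^{-\binom{n}{2}}c_{n-1}$ and using the elementary identity $\binom{n}{2}-\binom{k}{2}-\binom{n-k}{2}=k(n-k)$, the recurrence collapses to $c_m=\sum_{j=0}^{m-1}q^{(m-j)(j+1)}c_{m-j-1}c_j$ with $c_0=1$, which is (a rewriting of) the standard Carlitz--Riordan defining recurrence for $C_m(q)$; the identification $c_m=C_m(q)$ then gives the stated closed form.

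The only real care needed is bookkeeping: several equivalent conventions for the Carlitz--Riordan $q$-Catalan numbers circulate in the literature, differing by a shift in the exponent of $q$ in the convolution, so one has to check that the exponent produced by the ansatz matches the one in \cite{carlitz1964}. Beyond this minor verification, nothing substantive obstructs the argument; the whole proof amounts to elementary manipulation of $q$-series.
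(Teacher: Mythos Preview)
Your proposal is correct and follows essentially the same route as the paper: both first establish the identity $f(q,qz)=f(q,z)+z\,f(q,z/q)$ by direct coefficient comparison, deduce the functional equation for $g$, and then verify that the coefficients of $g$ satisfy the Carlitz--Riordan convolution recurrence. The only discrepancy is that the exponent you obtain after the substitution should be $j(m-j)$ (equivalently $(m-1-j)(j+1)$ after reindexing), not $(m-j)(j+1)$; but you already flag this bookkeeping as the one point needing care, and once corrected it matches the paper's recurrence $C_{n+1}(q)=\sum_{m=0}^{n}q^{(n-m)(m+1)}C_m(q)C_{n-m}(q)$ exactly.
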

\begin{proof}
From
\begin{multline*}
$$[z^n]f(q,qz)-[z^n]f(q,z)-[z^{n-1}]f(q,z/q)\\
=\frac{(-1)^n}{q^{\binom{n}{2}-n}(q;q)_n}-\frac{(-1)^n}{q^{\binom{n}{2}}(q;q)_n}-\frac{(-1)^{n-1}}{q^{\binom{n-1}{2}+(n-1)}(q;q)_{n-1}}=0,
\end{multline*}
we see that $f(q,qz)=f(q,z)+zf(q,z/q)$. Consequently, we have
$$g(q,qz)g(q,z)=\frac{f(q,q^2z)}{f(q,z)}=\frac{f(q,qz)}{f(q,z)}+qz=g(q,z)+qz.$$
This proves the functional equation for $g(z)$.

Carlitz and Riordan's $q$-Catalan number is defined by $C_0(q)=1$, and
$$C_{n+1}(q)=\sum_{m=0}^{n}q^{(n-m)(m+1)}C_{m}(q)C_{n-m}(q).$$
Let $\tilde{C}_n(q)=(-1)^nq^{\binom{n+1}{2}}[z^{n+1}]g(q,z)$, We will
prove $C_n=\tilde{C}_n$ by showing that they satisfy the same recurrence. In fact, for all $n\geq0$, we have
\begin{align*}
0&=-[z^{n+2}]g(q,z/q)+[z^{n+2}]g(q,z)g(q,z/q)\\
&=-[z^{n+2}]g(q,z/q)+\sum_{m=0}^{n+2}[z^{n+2-m}]g(q,z)[z^{m}]g(q,z/q)\\
&=-q^{-n-2}[z^{n+2}]g(q,z)+\sum_{m=0}^{n+2}q^{-m}[z^{n+2-m}]g(q,z)[z^{m}]g(q,z)\\
&=[z^{n+2}]g(q,z)+\sum_{m=1}^{n+1}q^{-m}[z^{n+2-m}]g(q,z)[z^{m}]g(q,z)\\
&=(-1)^{n+1}\tilde{C}_{n+1}(q)q^{-\binom{n+2}{2}}\\
&\kern1cm
+\sum_{m=1}^{n+1}(-1)^{n}q^{-\binom{n+2-m}{2}-\binom{m}{2}-m}\tilde{C}_{n+1-m}(q)\tilde{C}_{m-1}(q)\\
&=(-1)^{n-1}q^{-\binom{n+2}{2}}\left(\tilde{C}_{n+1}(q)\vphantom{\sum_{m=0}^{n}}\right.\\
&\kern1cm
\left.
-\sum_{m=0}^{n}(-1)^{n}q^{\binom{n+2}{2}-\binom{n+1-m}{2}+\binom{m+1}{2}-m-1}\tilde{C}_{n-m}(q)\tilde{C}_{m}(q)\right)\\
&=(-1)^{n-1}q^{-\binom{n+2}{2}}\left(\tilde{C}_{n+1}(q)-\sum_{m=0}^{n}(-1)^{n}q^{(n-m)(m+1)}\tilde{C}_{n-m}(q)\tilde{C}_{m}(q)\right).
\end{align*}

and
\begin{align*}
0&=-1-[z^1]g(q,z/q)+[z^1]g(q,z)g(q,z/q)\\
&=-1-q^{-1}[z^1]g(q,z)+(1+q^{-1})[z^0]g(q,z)[z^1]g(q,z)\\
&=-1+[z^1]g(q,z)\\
&=\tilde{C}_0(q)-1,
\end{align*}
as desired.
\end{proof}

\begin{lemma}
Let $d$ be a positive integer, and $\omega$ be a primitive $d$-th root of unity. Then
\[
\prod^{d-1}_{i=0}g(\omega,\omega^i z)=\frac{1+\sqrt{1+4z^d}}{2}.
\]
\end{lemma}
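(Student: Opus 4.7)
The plan is to convert the product into a power series satisfying the quadratic equation $y(y-1) = z^d$, which together with $y(0) = 1$ uniquely identifies it as $(1 + \sqrt{1+4z^d})/2$. Writing $g_i = g(\omega, \omega^i z)$ and $P_k = g_0 g_1 \cdots g_{k-1}$, I would first substitute $q = \omega$ and $z \mapsto \omega^i z$ into the functional equation \eqref{eqGFunctionEquation} to obtain the cyclic relations $g_i g_{i-1} = g_{i-1} + \omega^i z$, valid for $i \in \Z/d\Z$ since $\omega^d = 1$ gives $g_d = g_0$. Multiplying $P_k$ by $g_k = 1 + \omega^k z/g_{k-1}$ yields the three-term recurrence
\begin{equation*}
P_{k+1} = P_k + \omega^k z\, P_{k-1}, \qquad k \geq 1,
\end{equation*}
with $P_0 = 1$ and $P_1 = g_0$.

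Packaging this recurrence as a matrix product, set $M_k = \begin{pmatrix} 1 & \omega^k z \\ 1 & 0 \end{pmatrix}$ and $M := M_{d-1} M_{d-2} \cdots M_1 =: \begin{pmatrix} a & b \\ c & e \end{pmatrix}$, so that $P_d = a g_0 + b$ and $P_{d-1} = c g_0 + e$; a direct computation of $\det M = \prod_{k=1}^{d-1}(-\omega^k z) = z^{d-1}$ uses $\omega^{d(d-1)/2} = (-1)^{d-1}$. Running the recurrence one more step, the cyclic identity $g_d = g_0$ gives $P_{d+1} = g_0 P_d = P_d + zP_{d-1}$, i.e., $(g_0 - 1)P_d = z P_{d-1}$; eliminating $g_0$ by substituting $g_0 = (P_d - b)/a$ produces the quadratic
\begin{equation*}
P_d^{\,2} - (a + b + zc)\, P_d - z^d = 0
\end{equation*}
(after using $z(bc - ae) = -z^d$). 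It therefore suffices to prove $a + b + zc = 1$, which I expect to be the main obstacle.

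To establish this identity, I would observe that $a + b + zc = \operatorname{Tr}(M_d M_{d-1} \cdots M_1)$, where $M_d = \begin{pmatrix} 1 & z \\ 1 & 0 \end{pmatrix}$ because $\omega^d = 1$. Two symmetries then pin down this trace. First, the substitution $z \mapsto \omega z$ sends each $M_i(z)$ to $M_{i+1}(z)$, so by cyclic invariance of the trace (and $M_{d+1} = M_1$) the trace is invariant under $z \mapsto \omega z$, hence is a polynomial in $z^d$. Second, each $M_i$ has entries of $z$-degree at most $1$, so the trace has $z$-degree at most $d$; combining the two observations, it must be of the form $A_0 + A_1 z^d$. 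Setting $z = 0$ makes every $M_i$ equal to the idempotent $\begin{pmatrix} 1 & 0 \\ 1 & 0 \end{pmatrix}$, whose $d$-th power still has trace $1$, giving $A_0 = 1$; on the other hand, the $z^d$-coefficient $A_1$ would require every factor $(M_i)_{s_i, s_{i-1}}$ to be the $(1,2)$-entry $\omega^i z$, forcing $(s_i, s_{i-1}) = (1,2)$ at every step, which is incompatible with any closed cyclic path of length $d$; hence $A_1 = 0$.

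Combining, $P_d(P_d - 1) = z^d$. Since $g(\omega, 0) = 1$ from the explicit expansion of $g$, the product $P_d$ has constant term $1$, and the unique power-series solution of $y(y-1) = z^d$ with that constant term is $(1 + \sqrt{1 + 4z^d})/2$, completing the proof.
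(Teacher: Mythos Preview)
Your argument is correct and takes a genuinely different route from the paper's.  The paper first multiplies all $d$ functional equations to get $\prod_i g_i(1-g_i)=-z^d$, and then devotes the bulk of the proof to the identity $\prod_i(1-g_i)=1-\prod_i g_i$, which it establishes by a combinatorial expansion over ``independent'' subsets $S\subset\Z/d\Z$ (no two elements differing by~$1$) and a cancellation over translation orbits.  You instead build the partial products $P_k$ via a three-term recurrence, encode it as a transfer-matrix product $M_d\cdots M_1$, and reduce everything to the trace identity $\operatorname{Tr}(M_d\cdots M_1)=1$, which you prove by the two symmetries (cyclic invariance under $z\mapsto\omega z$ and the degree bound).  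Your approach is arguably more systematic and would generalize more easily to other continued-fraction-type recurrences; the paper's combinatorial argument, on the other hand, makes the cyclic structure of $\Z/d\Z$ more visibly the source of the identity, and avoids invoking invertibility of~$a$ as a formal power series.  Both arrive at the same quadratic $P_d(P_d-1)=z^d$ and finish identically.
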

\begin{proof}
In this proof, all subscripts are modulo~$d$.

Let $g_i(z)=g(\omega,\omega^i z)$. We use \ref{eqGFunctionEquation} to
get
$$g_{i}(z)(1-g_{i+1}(z))=-\omega^{i+1}z$$
for $0\leq i<d$. Forming the products of the left-hand and the
right-hand sides of these $d$ equations, we obtain
\begin{equation}
\prod_{i=0}^{d-1}g_i(z)(1-g_i(z))=-z^d.
\end{equation}

Note that the claim follows from a similar equation:
\begin{equation}
\left(\prod_{i=0}^{d-1}g_i(z)\right)
\left(1-\prod_{i=0}^{d-1}g_i(z)\right)\overset{?}{=}-z^d.
\end{equation}
Therefore, we need to prove that $\prod_{i=0}^{d-1}(1-g_i(z))=\left(1-\prod_{i=0}^{d-1}g_i(z)\right).$

Let $\mathcal{H}$ be the family of nonempty subsets of $\mathbb{Z}/d\mathbb{Z}$ that does not contain two elements that differ by $1$. For every such subset $S$, we define the function 
$$h_S(z)=\prod_{i\in S}g_i(z)(g_{i+1}(z)-1)=\sum_{S\subseteq T\subseteq S\cup(S+1)}(-1)^{|T|}\prod_{i\in T}g_i(z).$$

We claim that the sum of all such functions, $\sum_{S\in \mathcal{H}}h_S(z)$, is equal to
\begin{multline*}
\prod_{i=0}^{d-1}(1-g_i(z))-\left(1-\prod_{i=0}^{d-1}g_i(z)\right)\\
=\sum_{\emptyset\subsetneqq T\subsetneqq\mathbb{Z}/d\mathbb{Z}}(-1)^{|T|}\prod_{i\in
  T}g_i(z)+(1+(-1)^d)\prod_{i=0}^{d-1}g_i(z).
\end{multline*}
This claim is proved by noting that for every non-empty proper subset $T$ of $\mathbb{Z}/d\mathbb{Z}$, there exists a unique $S\in\mathcal{H}$ such that $S\subseteq T\subseteq S\cup(S+1)$; and, for $T=\mathbb{Z}/d\mathbb{Z}$, there exists no such $S$ if $d$ is odd, and two such $S$ if $d$ is even.

It remains to show that $\sum_{S\in \mathcal{H}}h_S(z)=0$. We break
this sum according to equivalence classes of $S$ by the translations
in $\mathbb{Z}/d\mathbb{Z}$. The sum for a class is given by
\begin{align*}
\sum_{j=0}^{d-1}h_{S+j}(z)&=\sum_{j=0}^{d-1}\prod_{i\in S}g_{i+j}(z)(g_{i+j+1}(z)-1)\\
&=\sum_{j=0}^{d-1}z^{|S|}\omega^{\sum_{i\in S}(i+j+1)}\\
&=z^{|S|}\omega^{\sum_{i\in S}(i+1)}\underbrace{\sum_{j=0}^{d-1}\omega^{j|S|}}_{1\leq|S|\leq \lfloor d/2\rfloor}\\
&=0.
\end{align*}
\end{proof}

We are now ready to prove Theorem \ref{thLogOfGeneratingFunction}.

\begin{proof}[Proof of Theorem \ref{thLogOfGeneratingFunction}]
We first note that
\begin{align*}
[z^n]\log g(q,z)&=[z^n]\log f(q,qz)-[z^n]\log f(q,z)=(q^n-1)[z^n]\log f(q,z).
\end{align*}

Let $P_n(q)=(-1)^{n-1}nq^{\binom{n}{2}}[z^n]\log g(q,z)$. We write

\begin{align*}
P_n(q)&=(-1)^{n-1}nq^{\binom{n}{2}}\sum_{m\geq 1}\frac{(-1)^{m-1}}{m}[z^n](g(q,z)-1)^m\\
&=\sum_{m\geq 1}q^{\binom{n}{2}}\frac{(-1)^{m+n}n}{m}\sum_{\substack{a_1+a_2+\dots+a_m=n\\a_i>0}}\prod_{j=i}^m[z^{a_i}]g(q,z)\\
&=\sum_{m\geq 1}\sum_{\substack{a_1+a_2+\dots+a_m=n\\a_i>0}}q^{\binom{n}{2}-\sum_i \binom{a_i}{2}}\frac{n}{m}\prod_{i=1}^mC_{a_i-1}(q)\\
&=\sum_{b_1+2b_2+\dots+nb_n=n}q^{\binom{n}{2}-\sum_j b_j\binom{j}{2}}\underbrace{\frac{n}{\sum b_j}\binom{\sum b_j}{b_1,b_2,\dots,b_n}}_{\in\mathbb{Z}}\prod_{j=1}^nC_{j-1}^{b_j}(q)\\
&\in\mathbb{Z}[q].
\end{align*}

The specific values of $P_n(q)$ that we need are:
\begin{align*}
P_{dn}(\omega_d)&=(-1)^{dn-1}dn \underbrace{\omega_d^{\binom{dn}{2}}}_{=(-1)^{n(dn-1)}}[z^{dn}]\log g(\omega_d,z)\\
&=(-1)^{(n-1)(dn-1)}dn\left(\frac{1}{d}\sum_{i=0}^{d-1}[z^{dn}]\log g(\omega_d,\omega_d^iz)\right)\\
&=(-1)^{(n-1)(dn-1)}n[z^{dn}]\log\prod_{i=0}^{d-1}g(\omega_d,\omega_d^iz)\\
&=(-1)^{n-1}n\underbrace{[z^{dn}]\log\frac{1+\sqrt{1+4z^d}}{2}}_{=(-1)^{n-1}\binom{2n-1}{n-1}/n}\\
&=\binom{2n-1}{n-1}.
\end{align*}

\end{proof}

\section*{Appendix B: Auxiliary divisibility results and the proof of Theorem \ref{thDivLogFCase1}}

\setcounter{equation}{0}
\setcounter{theorem}{0}
\global\def\thesection{\mbox{B}}

\begin{lemma}\label{leMoebiusDivisibility}
Let $f\in \mathbb{Z}[x_1,x_2,\dots]$, and $d$ be a positive integer. Then we have
\begin{equation}
\sum_{e\mid d}\mu(d/e)f(x_1^{d/e},x_2^{d/e},\dots)^e\in d\mathbb{Z}[[x]].
\end{equation}
\end{lemma}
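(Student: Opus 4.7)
The plan is to recognize this as a Witt vector / Dwork--style divisibility and prove it one prime at a time. First I would substitute $k = d/e$ to recast the claim as
$$T_d(f) := \sum_{k \mid d} \mu(k)\, f(x^k)^{d/k} \in d\,\mathbb{Z}[[x_1,x_2,\dots]],$$
where $x^k$ abbreviates $(x_1^k, x_2^k, \dots)$. Since $d = \prod_{p \mid d} p^{v_p(d)}$, it suffices to fix a prime $p \mid d$, set $a := v_p(d)$, and prove $p^a \mid T_d(f)$.

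Writing $d = p^a m$ with $\gcd(p,m)=1$, every squarefree divisor of $d$ has the form $k'$ or $k'p$ for a unique squarefree $k' \mid m$, and $\mu(k'p) = -\mu(k')$. Pairing these terms yields
$$T_d(f) = \sum_{\substack{k' \mid m \\ k' \text{ squarefree}}} \mu(k') \left[\,f(x^{k'})^{d/k'} - f(x^{k'p})^{d/(k'p)}\,\right].$$
Setting $u := f(x^{k'}) \in \mathbb{Z}[x_1,x_2,\dots]$ and $s := d/(k'p)$ (so that $v_p(s) = a-1$), each bracket simplifies to $u(x)^{ps} - u(x^p)^s$, and the problem reduces to showing that $p^a$ divides this difference.

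By Frobenius I may write $u(x)^p = u(x^p) + p\,v(x)$ for some $v \in \mathbb{Z}[x_1,x_2,\dots]$; substituting into $(u(x)^p)^s$ and expanding via the binomial theorem gives
$$u(x)^{ps} - u(x^p)^s = \sum_{j=1}^{s} \binom{s}{j}\, u(x^p)^{s-j}\, p^j\, v(x)^j,$$
so the $j$-th summand has $p$-adic valuation at least $v_p\bigl(\tbinom{s}{j}\bigr) + j$. The crux of the argument is the elementary estimate
$$v_p\!\left(\binom{s}{j}\right) + j \;\geq\; v_p(s) + 1 \;=\; a \qquad \text{for all } 1 \le j \le s.$$
I would deduce this from the identity $\binom{s}{j} = (s/j)\binom{s-1}{j-1}$, which gives $v_p\!\bigl(\tbinom{s}{j}\bigr) \geq v_p(s) - v_p(j)$, combined with the standard fact $j \geq v_p(j)+1$ valid for every positive integer $j$ (since $p^{v_p(j)} \leq j$). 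Verifying this Kummer-type inequality is the only genuine content beyond the Frobenius lift; the rest is bookkeeping.
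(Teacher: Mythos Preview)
Your proof is correct and takes a genuinely different route from the paper's. The paper argues combinatorially: it first treats the special case $f(x_1,\dots,x_n)=x_1+\cdots+x_n$ by interpreting the M\"obius sum as the generating function for maps $C_d\to\{x_1,\dots,x_n\}$ with trivial stabilizer under the rotation action of $C_d$, hence divisible by $d$ (a necklace-counting argument); it then reduces a general $f\in\mathbb{Z}[x_1,x_2,\dots]$ to this case by writing $f$ modulo $d$ as a sum of monic monomials and substituting those monomials for the $x_i$. Your approach is instead arithmetic in flavor: you work one prime at a time, pair squarefree divisors as $k'$ and $pk'$, and reduce to the Dwork-type estimate $p^a\mid u(x)^{ps}-u(x^p)^s$ via the Frobenius congruence $u(x)^p\equiv u(x^p)\pmod p$ and the Kummer bound $v_p\binom{s}{j}+j\ge v_p(s)+1$. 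Both arguments are short; the paper's gives a transparent combinatorial meaning to the quotient by $d$, while yours fits naturally into the Witt-vector/Dwork framework and handles a general $f$ directly without the monomial-decomposition trick.
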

\begin{proof}
The proof is divided into two parts with increasing generality.
\begin{enumerate}
\item We first prove the claim for the case where $f(x_1,x_2,\dots,x_n)=x_1+x_2+\dots+x_n$. In this case, let $C_d$ be a cyclic group with order $d$. We consider the set of maps $M=\{f:C_d\to\{x_1,x_2,\dots,x_n\}\}$, and let the action of $C_d$ on $M$ be given by $(g.f)(a)=f(g+a)$. In this case, the quantity
\[
(x_1+x_2+\cdots)^d
\]
is the generating function of the set $M$, while
\[
\left(x_1^{d/e}+x_2^{d/e}+\cdots\right)^e
\]
gives the generating function of the maps invariant under the action of a subgroup $C_e$. Therefore, by using M\"obius inversion, we conclude that
\begin{equation*}
\sum_{e\mid d}\mu(d/e)\left(x_1^{d/e}+x_2^{d/e}+\dots\right)^e
\end{equation*}
is the generating function of all the maps in $M$ with a trivial stabilizer. It's obvious that such maps can be partitioned into full orbits with size $d$, so their generating function is divisible by $d$.

\item Now suppose that $f\in \mathbb{Z}[x_1,x_2,\dots]$ is a polynomial with integer coefficients. By reducing $f$ modulo $d$, we suppose without loss of generality that the coefficients of $f$ belongs to $\{0,1,2,\dots,d-1\}$. We write $$f(x_1,x_2,\dots)=\sum_{i=1}^{n} g_i(x_1,x_2,\dots)$$ where each $g_i$ is a monomial with coefficient $1$. The condition on $g_i$ ensures that $g_i(x_1^k,x_2^k,\dots)=g_i^k(x_1,x_2,\dots)$ for all positive integer $k$. Consequently, we have
\begin{align*}
\sum_{e\mid d}\mu(d/e)f(x_1^{d/e},x_2^{d/e},\dots)^e
&=\sum_{e\mid d}\mu(d/e)\left(\sum_{i=1}^{n} g_i(x_1^{d/e},x_2^{d/e},\dots)\right)^e\\
&=\sum_{e\mid d}\mu(d/e)\left(\sum_{i=1}^{n} g_i^{d/e}(x_1,x_2,\dots)\right)^e.\\
\end{align*}
We claim that the last sum is divisible by $d$. This is proved by substituting $g_i(x_1,x_2,\dots)$ in place of $x_i$ in Case~1.
\end{enumerate}
\end{proof}

\begin{lemma}\label{leExpansionOfPn}
There exists a sequence of polynomials $Q_n(q)\in\mathbb{Z}[q]$ such that
\begin{equation}\label{eqExpansionOfPn}
P_n(q)=\sum_{d\mid n}(-1)^{n-d}d\frac{q^n-1}{q^{n/d}-1}q^{\binom{n}{2}-\frac{n}{d}\binom{d}{2}}Q_d(q^{n/d}).
\end{equation}
\end{lemma}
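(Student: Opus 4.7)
My plan is Möbius inversion on the divisor lattice, combined with Lemma~\ref{leMoebiusDivisibility}. First I would recast \eqref{eqExpansionOfPn} into a standard Möbius form: setting $T_n(q) := (-1)^n q^{-\binom{n}{2}} P_n(q)$ and $R_d(y) := (-1)^d d\, y^{-\binom{d}{2}} \frac{y^d-1}{y-1}\, Q_d(y)$, the target identity becomes $T_n(q) = \sum_{d\mid n} R_d(q^{n/d})$. Möbius inversion then solves uniquely for each $R_n$, giving
\[
Q_n(q) = \frac{q^{\binom{n}{2}}}{n(1+q+\cdots+q^{n-1})} \sum_{d\mid n} \mu(n/d)\,(-1)^{n-d}\,q^{-(n/d)\binom{d}{2}}\,P_d(q^{n/d})
\]
as a rational function of $q$. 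The substance is to prove it lies in $\mathbb{Z}[q]$.

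To show that the ``numerator'' $\tilde Q_n(q) := (1+q+\cdots+q^{n-1})Q_n(q)$ belongs to $\mathbb{Z}[q]$, I would substitute the combinatorial formula for $P_d$ from the proof of Theorem~\ref{thLogOfGeneratingFunction}. Regrouping compositions of $d$ by their minimal cyclic period, this formula reads
\[
P_d(q) = q^{\binom{d}{2}} \sum_{e\mid d} e \sum_{O} X_O(q)^{d/e},
\]
where $O$ ranges over cyclic orbits of primitive compositions $(b_1,\ldots,b_p) \vdash e$ and $X_O(q) = q^{-\sum_j\binom{b_j}{2}} \prod_j C_{b_j-1}(q)$. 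Swapping the sums over $e \mid d \mid n$ (by writing $d = el$ with $l$ ranging over divisors of $n/e$) leaves an inner sum of the shape $\sum_{l\mid k}\mu(k/l)\bigl((-1)^e Y_O(q^{k/l})\bigr)^l$, with $k = n/e$ and $Y_O := \prod_j C_{b_j-1} \in \mathbb{Z}[q]$. Lemma~\ref{leMoebiusDivisibility} applied to this inner sum yields divisibility by $k$; combined with the outer factor $e$ (with $ek = n$), this cancels the $n$ in the denominator. A degree check using $\sum_j \binom{b_j}{2} \le \binom{e}{2}$ rules out any surviving negative powers of $q$, so that $\tilde Q_n \in \mathbb{Z}[q]$.

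The remaining step---that $\tilde Q_n(q)$ is divisible by $1+q+\cdots+q^{n-1}$ in $\mathbb{Z}[q]$---is the main obstacle. The natural route is to verify that $\tilde Q_n(\zeta) = 0$ for every primitive $m$-th root of unity $\zeta$ with $m \mid n$ and $m > 1$. At such a $\zeta$, each $\zeta^{n/d}$ is a primitive $m_d$-th root of unity for $m_d := m/\gcd(m, n/d)$, and an elementary divisibility check (using $m \mid n$ and $d \mid n$) shows $m_d \mid d$. We can thus invoke the explicit evaluation $P_{m_d k_d}(\zeta^{n/d}) = \binom{2k_d - 1}{k_d - 1}$ from Theorem~\ref{thLogOfGeneratingFunction} (with $k_d := d/m_d$), reducing the vanishing to a finite identity among binomial coefficients, Möbius function values, and powers of $\zeta$, to be settled by a careful combinatorial manipulation. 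This last step is where the delicate work lies.
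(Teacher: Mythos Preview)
Your plan is essentially the paper's own proof: it too defines $Q_n$ via M\"obius inversion and then verifies integrality in two separate parts, namely divisibility of the numerator by $n$ through the primitive-composition regrouping together with Lemma~\ref{leMoebiusDivisibility}, and divisibility by $1+q+\cdots+q^{n-1}$ by checking vanishing at every nontrivial $n$-th root of unity using the evaluations $P_d(\omega)=\binom{2k-1}{k-1}$ from Theorem~\ref{thLogOfGeneratingFunction}.

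For the step you single out as the main obstacle, the paper supplies a concrete device that your proposal does not yet contain. Write the root as $\zeta=e^{2\pi i a/n}$ with $1\le a\le n-1$ and reindex the sum (via $d\mapsto n/d$) so that it runs over square-free divisors $d$ of $n$. Choose a prime $p$ with $v_p(a)\le v_p(n)-1$; such a prime exists precisely because $n\nmid a$ (in your language, any prime dividing $m$ works). Now pair each square-free $d$ not divisible by $p$ with $pd$: since $v_p(n/d)\ge v_p(n)-1\ge v_p(a)$ one has $\gcd(a,n/d)=\gcd(a,n/pd)$, so the binomial-coefficient factors agree; the M\"obius values $\mu(d)$ and $\mu(pd)$ are opposite; and a short parity argument shows the remaining sign factors $(-1)^{(a-1)(n-n/d)}$ and $(-1)^{(a-1)(n-n/pd)}$ coincide. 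Each pair therefore cancels, which is the entire content of the ``careful combinatorial manipulation'' you anticipated.
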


\begin{proof}
We consider the polynomials
\begin{equation}
R_n(q)=\sum_{d\mid n}(-1)^{n-d}\mu(n/d)q^{\binom{n}{2}-\frac{n}{d}\binom{d}{2}}P_d(q^{n/d}).
\end{equation}
By M\"obius inversion, the sequence of rational functions $Q_n(q)=\frac{1}{n}\frac{q-1}{q^n-1}R_n(q)$ will satisfy the equation in the lemma. It remains to prove that $Q_n(q)$ is actually a sequence of polynomials with integral coefficients; equivalently, we need to prove that, for every $n\geq1$, $R_n(q)$ is divisible by both $n$ and $\frac{q^n-1}{q-1}$.

The fact that $\frac{q^n-1}{q-1}\mid R_n(q)$ is relatively straightforward, as we only need to verify that $R_n(e^{2\pi i a/n})=0$ for every $1\leq a\leq n-1$. Indeed, we have
\begin{equation}
\begin{split}
R_n(e^{2\pi i a/n})&=\sum_{d\mid n}(-1)^{n-d}\mu(n/d)e^{\frac{2\pi i a}{n}(\binom{n}{2}-\frac{n}{d}\binom{d}{2})}P_d(e^{2\pi i a/d})\\
&=\sum_{d\mid n}(-1)^{(n-d)(a-1)}\mu(n/d)\binom{2(a,d)-1}{(a,d)-1}\\
&=\sum_{d\mid n}(-1)^{(n-n/d)(a-1)}\mu(d)\binom{2(a,n/d)-1}{(a,n/d)-1},
\end{split}
\end{equation}
where the second equality comes from Theorem \ref{thLogOfGeneratingFunction}.

Because of the presence of the term $\mu(d)$ in the summand, the sum in the last equation can instead be written as a sum over all square-free factors of $n$. For any prime factor $p$ of $n$, the set of such factors can be partitioned into pairs $\{d,pd\}$ such that $p\nmid d$.

Now, since $1\leq a\leq n-1$, there exists a prime $p$ satisfying $v_p(a)\leq v_p(n)-1$. This means that $\gcd(a,n/d)=\gcd(a,n/pd)$ for any $d$ not divisible by $p$. We use this prime $p$ to partition the set of square-free factors of $n$ as above. The contribution of each pair to the sum is
\begin{align*}
(-1)^{(n-n/d)(a-1)}&\mu(d)\binom{2(a,n/d)-1}{(a,n/d)-1}\\
&\kern2cm
+(-1)^{(n-n/pd)(a-1)}\mu(pd)\binom{2(a,n/pd)-1}{(a,n/pd)-1}\\
&\kern-10pt
=\binom{2(a,n/d)-1}{(a,n/d)-1}\mu(d)\left((-1)^{(a-1)(n-n/d)}-(-1)^{(a-1)(n-n/pd)}\right)\\
&\kern-10pt
=0.
\end{align*}
Therefore, $R_n(e^{2\pi i a/n})$=0 for $1\leq a\leq n-1$, which proves our claim. The last equality holds because
$$(a-1)(n-n/d)-(a-1)(n-n/pd)=(a-1)\frac{n(p-1)}{pd}$$
is always even. (This is obvious if $p$ is odd; if $p=2$, either $v_2(n)\geq 2$ so that $\frac{n}{2d}$ is even, or $v_2(n)=1$ so that $a$ is odd and $a-1$ is even.) Hence, the parity of the two exponents is the same.

We now proceed to prove that $\frac1n R_n(q)\in\Z[q]$. To this end, we substitute the expression of $P_n(q)$ in the proof of Theorem \ref{thLogOfGeneratingFunction} into the definition of $R_n(q)$ to get
\begin{multline}\label{eqRnSum}
\frac1nR_n(q)= \sum_{m=1}^{\infty}\sum_{d\mid n}\sum_{\substack{\sum_i a_i=d\\a_i>0}}(-1)^{n-d}\mu(n/d)q^{\binom{n}{2}-\frac{n}{d}\sum_i\binom{a_i}{2}}\frac{d}{mn}\prod_{i=1}^mC_{a_i-1}(q^{n/d}).
\end{multline}

We call a sequence $(a_1,a_2,\dots,a_m)$ \emph{primitive} if it is not fixed by any non-trivial cyclic permutations. It is obvious that a non-primitive sequence is the concatenation of some copies of a primitive sequence. Now let $(a_1,a_2,\dots,a_m)$ be a primitive sequence, where $\sum a_i=c$. The $e$-fold self-concatenation of the sequence $(a_i)$ contributes to the multi-sum in \ref{eqRnSum} if and only if $ce\mid n$. This means that the contribution of $(a_i)$ and its self-concatenations is given by
\begin{align*}
\frac{c}{mn}(-1)^nq^{\binom{n}{2}-\frac{n}{c}\sum_i\binom{a_i}{2}}\sum_{e\mid n/c}(-1)^{ce}\mu(n/ce)\prod_{i=1}^mC_{a_i-1}(q^{n/ce})^e.
\end{align*}
We use Lemma~\ref{leMoebiusDivisibility} with $d=n/c$ and $f(q)=(-1)^c\prod_{i=1}^m C_{a_i-1}(q)$ to prove that the last sum is divisible by $n/c$, so that the contribution of $(a_i)$ belongs to $\frac1m \Z[q]$. The $m$ in the denominator is eliminated by summing over all $m$ cyclic permutations of a primitive sequence.

\end{proof}

\begin{proof}[Proof of Theorem \ref{thDivLogFCase1}]
	Let $Q_n(q)$ be defined as in Lemma~\ref{leExpansionOfPn}, and define
	\[
	k(q,z)=\sum_{d\geq 1}\frac{(-1)^{d-1}Q_d(q)}{q^{\binom{d}{2}}(q-1)}z^n.
	\]
	Combining Theorem \ref{thLogOfGeneratingFunction} and Lemma~\ref{leExpansionOfPn}, we obtain the identity
	\[
	h(q,z)=\sum_{n=1}^{\infty}\frac{k(q^n,z^n)}{n}.
        \]
	So we conclude that
	\begin{equation}\label{eqHAsASum}
	h(q,z)-\frac1p h(q^p,z^p)=\sum_{\substack{n\geq1\\p\nmid n}}\frac{k(q^n,z^n)}{n}.
	\end{equation}
	The definition of $k$ implies that $k(q,z)\in p^{-v_p(q-1)}\Zp[z]$, and consequently, for all $n$ such that $p\nmid n$, we have
	\[
	\frac{k(q^n,z^n)}{n}\in p^{-v_p(q^n-1)}\Zp[z].
	\]
	Now Lemma~\ref{leDivQNMinus1} yields $v_p(q^n-1)=v_p(q-1)$ when $p\nmid n$, so the theorem is proved.
\end{proof}

\setcounter{equation}{0}
\setcounter{theorem}{0}
\global\def\thesection{\mbox{C}}

\section*{Appendix C: Proof of Theorem \ref{thDivLogFCase2}}

We begin this section with some preliminary lemmas on the $2$-adic properties of harmonic numbers and binomial coefficients, and use them to obtain a parity result for the sequence $Q_n(q)$ of polynomials defined in Lemma~\ref{leExpansionOfPn}.

\begin{lemma}\label{lev2Harmonic}
Let $d\in\Z^+$. Then we have
\[
v_2\left(\sum_{i=1}^{d}\frac{1}{2i-1}\right)=2v_2(d).
\]
\end{lemma}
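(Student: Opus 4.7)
The plan is to apply a pairing to extract a factor of $d$, and then analyze the remainder via a $2$-adic expansion.

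Writing $L(d) := \sum_{i=1}^d (2i-1)^{-1}$, I would first pair index $i$ with $d-i+1$, using the identity
\[
\frac{1}{2i-1} + \frac{1}{2d-2i+1} = \frac{2d}{(2i-1)(2d-2i+1)},
\]
to rewrite $L(d) = d \cdot h(d)$, where $h(d) := \sum_{i=1}^d [(2i-1)(2d-2i+1)]^{-1}$. This reduces the claim to $v_2(h(d)) = v_2(d)$. For $d$ odd, each summand of $h(d)$ is a $2$-adic unit and the number of summands is odd, so $h(d) \equiv 1 \pmod 2$ and $v_2(h(d)) = 0 = v_2(d)$.

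For $d$ even, the summand $[(2i-1)(2d-2i+1)]^{-1}$ is invariant under the involution $i \leftrightarrow d-i+1$, which partitions $\{1,\ldots,d\}$ into $d/2$ two-element orbits. Reindexing $j = d/2 - i + 1$ converts the doubled sum into $h(d) = 2 \sum_{j=1}^{d/2} [d^2 - (2j-1)^2]^{-1}$. Since $d$ is even, $d^2/(2j-1)^2 \in 4\mathbb{Z}_2$, so the geometric expansion $[d^2 - (2j-1)^2]^{-1} = -\sum_{k \ge 0} d^{2k}/(2j-1)^{2k+2}$ converges $2$-adically. Expanding further via the identity $(2j-1)^2 = 1 + 8\binom{j}{2}$ and interchanging summations, I would derive
\[
h(d) = -\sum_{l \ge 0} \frac{2(-8)^l \, S_l(d/2)}{(1 - d^2)^{l+1}}, \qquad S_l(m) := \sum_{j=1}^m \binom{j}{2}^l,
\]
with $S_0(m)=m$. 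The $l=0$ term equals $-d/(1-d^2)$ and has $v_2 = v_2(d)$, since $1 - d^2 \in 1 + 4\mathbb{Z}_2$ is a unit. For $l = 1$, a short case analysis of $S_1(m) = \binom{m+1}{3}$ gives $v_2\bigl(\binom{d/2+1}{3}\bigr) \ge v_2(d) - 2$, so the $l=1$ contribution has $v_2 \ge v_2(d) + 2$.

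The main obstacle is showing that every term with $l \ge 2$ also has $v_2 > v_2(d)$. The key auxiliary estimate I would aim for is $v_2(S_l(m)) \ge v_2(m) - 1$ for all $l \ge 1$; combined with the factor $(-8)^l$ contributing $v_2 \ge 3l$, this forces the $l$-th contribution to have $v_2 \ge v_2(d) + 3l - 1 > v_2(d)$. To establish this estimate I would observe that for $l \ge 1$ the polynomial $S_l(m)$ vanishes at $m=0$ and $m=1$, so $m(m-1)$ divides $S_l(m)$ as polynomials, yielding a factorization $S_l(m) = m(m-1)Q_l(m)/2^l$; then $v_2(Q_l(m)) \ge l-1$ should follow by induction on $l$ from the recursion $S_l(m) - S_l(m-1) = \binom{m}{2}^l$ together with standard $2$-adic estimates on Faulhaber polynomials. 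Once this is in place, the $l=0$ term dominates in $h(d)$, so $v_2(h(d)) = v_2(d)$ and hence $v_2(L(d)) = 2v_2(d)$.
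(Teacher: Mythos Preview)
Your initial pairing $L(d)=d\cdot h(d)$ and the reduction to showing $v_2(h(d))=v_2(d)$ match the paper exactly, but from there you take a genuinely different route. The paper writes $d=2^k m$ with $m$ odd, splits $h(d)$ into $m$ consecutive blocks of length $2^k$, and observes that within each block $(2i-1)(2d-2i+1)\equiv -(2i-1)^2\pmod{2^{k+1}}$, so each block reduces modulo $2^{k+1}$ to $-\sum_{i=1}^{2^k}(2i-1)^{-2}$; the latter is evaluated via the explicit sum of odd squares to be exactly $2^k\pmod{2^{k+1}}$. Summing $m$ (odd) copies gives $v_2(h(d))=k$ in one stroke. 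Your approach instead rewrites $h(d)$ as a $2$-adic power series in $8\binom{j}{2}/(1-d^2)$ and isolates the dominant $l=0$ term; this is more elaborate but also valid, provided the tail estimate on $S_l$ holds.

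That tail estimate is where your proposal has a real gap. The sketch you offer --- factoring $S_l(m)=m(m-1)Q_l(m)/2^l$ and bounding $v_2(Q_l(m))\ge l-1$ ``by induction on $l$ from the recursion $S_l(m)-S_l(m-1)=\binom{m}{2}^l$ together with standard $2$-adic estimates on Faulhaber polynomials'' --- is not a proof: that recursion relates values of $S_l$ at different arguments, not at different $l$, and gives no mechanism for passing from $l-1$ to $l$. The inequality $v_2(S_l(m))\ge v_2(m)-1$ is nevertheless true, and the clean argument inducts on $v_2(m)$ rather than on $l$. From
\[
\binom{j}{2}-\binom{m+1-j}{2}=\frac{m(2j-m-1)}{2},
\]
which for $m$ even has $v_2$ exactly $v_2(m)-1$, one obtains $\binom{j}{2}^l\equiv\binom{m+1-j}{2}^l\pmod{2^{v_2(m)-1}}$, and folding the sum gives $S_l(m)\equiv 2\,S_l(m/2)\pmod{2^{v_2(m)-1}}$; the induction hypothesis on $m/2$ then yields $v_2(S_l(m))\ge v_2(m)-1$. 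With this lemma in hand your expansion argument goes through, but the paper's block decomposition is considerably shorter and sidesteps the need for any such auxiliary result.
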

\begin{proof}
We have the identity 
\begin{align*} 
\sum_{i=1}^{d}\frac{1}{2i-1}&=\frac12\sum_{i=1}^{d}\left(\frac{1}{2i-1}+\frac{1}{2d-2i+1}\right)\\
&=d\sum_{i=1}^{d}\frac{1}{(2i-1)(2d-2i+1)},
\end{align*}
so it suffices to prove that the $2$-divisibility of $\sum_{i=1}^{d}\frac{1}{(2i-1)(2d-2i+1)}$ is exactly $v_2(d)$.

Suppose that $d=2^km$ where $m$ is odd. For $1\leq j\leq m$, we consider the sum
\[
\sum_{i=2^kj+1}^{2^kj+2^k}\frac{1}{(2i-1)(2d-2i+1)}\equiv\sum_{i=1}^{2^k}\frac{-1}{(2i-1)^2}\pmod{2^{k+1}}.
\]
When $i$ ranges over all residue classes modulo $2^k$, $(2i-1)^{-1}$ ranges over all odd residue classes modulo $2^{k+1}$. Therefore, we conclude that 
\begin{align*} 
\sum_{i=1}^{2^k}\frac{-1}{(2i-1)^2}&\equiv-\sum_{i=1}^{2^k}(2i-1)^2\\
&=-\frac{1}{3}\left(2^{3k+2}-2^{k}\right)\\
&\equiv2^k\pmod{2^{k+1}}.
\end{align*}
Thus $\sum_{i=1}^{d}\frac{1}{(2i-1)(2d-2i+1)}$ is the sum of $m$ terms where each term is $2^k\pmod{2^{k+1}}$, so its $2$ divisibility is $k$ as claimed.
\end{proof}

\begin{lemma}\label{lev2Binomial}
Let $d\in\Z^+$. Then we have
\[
v_2\left(\binom{4d-1}{2d-1}-(-1)^d\binom{2d-1}{d-1}\right)=2+2v_2(d)+s_2(d-1),
\]
where $s_2(\cdot)$ represents the base-2 digit sum.
\end{lemma}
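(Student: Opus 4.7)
The plan is to reduce the identity to a 2-adic statement about a product and estimate it via the 2-adic logarithm, with Lemma~\ref{lev2Harmonic} as the key ingredient. First, using $\binom{2m-1}{m-1} = \tfrac12\binom{2m}{m} = 2^{m-1}(2m-1)!!/m!$, one verifies
\[
\binom{4d-1}{2d-1} = \binom{2d-1}{d-1}\prod_{i=1}^d\Big(1+\frac{2d}{2i-1}\Big).
\]
Combining Kummer's theorem $v_2\binom{2d-1}{d-1} = s_2(d)-1$ with the identity $s_2(d-1) = s_2(d)+v_2(d)-1$ (immediate from $d = 2^{v_2(d)}m$, $m$ odd), the claim reduces to showing
\[
v_2\bigl(P-(-1)^d\bigr) = 2+3v_2(d),\qquad P := \prod_{i=1}^d\Big(1+\frac{2d}{2i-1}\Big).
\]

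Next I would expand the 2-adic logarithm (convergent since $v_2(2d/(2i-1)) = v_2(d)+1 \ge 1$):
\[
\log P = \sum_{n\ge 1}\frac{(-1)^{n-1}(2d)^n}{n}S_n,\qquad S_n := \sum_{i=1}^d(2i-1)^{-n}.
\]
By Lemma~\ref{lev2Harmonic}, $v_2(S_1) = 2v_2(d)$, so the $n=1$ term contributes valuation $1+3v_2(d)$, while the bound $v_2(S_n)\ge 0$ forces terms with $n\ge 3$ to contribute valuation strictly larger than $2+3v_2(d)$. The delicate $n = 2$ term can tie with $n = 1$, so one must combine them, writing $2dS_1-2d^2S_2 = 2d\sum_{i=1}^d(2i-1-d)/(2i-1)^2$.

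For $d$ even, $P\equiv 1\pmod 4$, so $v_2(P-1) = v_2(\log P)$, and the main technical obstacle is to show $v_2(S_1-dS_2) = 2v_2(d)+1$. Writing $d = 2^{v_2(d)}m$ with $m$ odd, I would partition the sum into $m$ blocks of $2^{v_2(d)}$ consecutive indices and expand each summand $(l\cdot 2^{v_2(d)+1} + 2r - 1)^{-n}$ as a geometric series in $l\cdot 2^{v_2(d)+1}$ (paralleling the argument in Lemma~\ref{lev2Harmonic}). This yields
\[
S_1-dS_2 = m\,s_1 - 2^{v_2(d)}m(2m-1)\,s_2 + R,
\]
with $s_n := \sum_{r=1}^{2^{v_2(d)}}(2r-1)^{-n}$ and $v_2(R) \ge 2v_2(d)+2$; an explicit mod-$2^{2v_2(d)+1}$ computation of $s_1, s_2$ then pins down the leading valuation, giving $v_2(\log P) = 2+3v_2(d)$.

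For $d$ odd, $P\equiv -1\pmod 4$; write $-P = \prod_{i=1}^d(1+y_i)$ with $y_i := -2-2d/(2i-1) = -2(2i-1+d)/(2i-1)$, so $v_2(y_i)\ge 2$. Then $v_2(-P-1) = v_2(\sum_i y_i) = v_2(-2d(1+S_1))$, and a short mod-$4$ computation splitting $S_1$ according to the parity of $i$ gives $S_1\equiv 1\pmod 4$, hence $v_2(1+S_1) = 1$ and $v_2(P+1) = 2 = 2+3\cdot 0$, concluding both cases.
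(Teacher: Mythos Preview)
Your reduction to $v_2\bigl(P-(-1)^d\bigr)=2+3v_2(d)$ with $P=\prod_{i=1}^d\bigl(1+\tfrac{2d}{2i-1}\bigr)$ is exactly the paper's reduction (the paper writes the same product as $\prod_i\bigl(\tfrac{4d}{2i-1}-1\bigr)$ and factors out $\tfrac{2d-1}{d}\binom{2d-2}{d-1}=\binom{2d-1}{d-1}$). Your odd-$d$ argument is correct and pleasantly short.

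The even-$d$ case is where the approaches diverge. The paper expands $P$ as the polynomial $\prod_i\bigl(1-\tfrac{x}{2i-1}\bigr)$ evaluated at $x=4d$, so that $P-1=-h_1(4d)+h_2(4d)^2-\cdots$ with $h_k=e_k\bigl(\tfrac{1}{2i-1}\bigr)\in\Z_2$. Here the linear term $4d\,h_1=4d\,S_1$ already has valuation $2+3v_2(d)$, and all one needs is the \emph{lower bound} $v_2(h_2)\ge v_2(d)-1$, which the paper gets from the pairing identity
\[
h_2=\frac{1}{2d}h_1+\sum_{1\le i<j\le d/2}\frac{4d^2}{(2i-1)(2j-1)(2d-2i+1)(2d-2j+1)}.
\]
Your log expansion in powers of $2d$ (rather than $4d$) puts the $n=1$ term at valuation $1+3v_2(d)$, forcing a genuine cancellation with the $n=2$ term. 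Via Newton's identity $h_2=\tfrac12(S_1^2-S_2)$, the paper's $v_2(h_2)=v_2(d)-1$ only gives $v_2(S_2)=v_2(d)$, hence $v_2(dS_2)=2v_2(d)=v_2(S_1)$; so your required \emph{exact} identity $v_2(S_1-dS_2)=2v_2(d)+1$ is strictly stronger than what the paper proves. Your block decomposition $S_1-dS_2\equiv m\,s_1-2^k m(2m-1)s_2\pmod{2^{2k+2}}$ is correct, and since $v_2\bigl(2^{k+1}(m-1)s_2\bigr)\ge 2k+2$ the claim reduces to the $m$-free statement $v_2(s_1-2^k s_2)=2k+1$. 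But this last step is not carried out: it needs $s_1\bmod 2^{2k+2}$ and $s_2\bmod 2^{k+2}$, which is a genuine refinement of Lemma~\ref{lev2Harmonic} (that lemma only gives $v_2(s_1)=2k$). So your even-$d$ route is workable but strictly harder than the paper's; the extra cancellation you must resolve is precisely what the paper's choice of expansion in $x=4d$ avoids.
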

\begin{proof}
Simple algebraic manipulations lead to the identity
\begin{equation}
\binom{4d-1}{2d-1}-(-1)^d\binom{2d-1}{d-1}=\frac{2d-1}{d}\binom{2d-2}{d-1}\left(\prod_{i=1}^{d}\left(\frac{4d}{2i-1}-1\right)-(-1)^{d}\right).
\end{equation}
Using the fact that $v_2\left(\binom{2d-2}{d-1}\right)=s_2(d-1)$, it suffices to prove that the $2$-divisibility of the last factor is equal to $3v_2(d)+2$. To this end, we consider the polynomial $P(x)=\prod_{i=1}^{d}\left(\frac{x}{2i-1}-1\right)-(-1)^{d}$. We have to prove that $v_2(P(4d))=3v_2(d)+2$.

It is obvious that 
\[
P(x)=(-1)^d\left(-h_1 x+h_2x^2\right)+O(x^3),
\]
where $h_1=\sum_{i=1}^{d}\frac{1}{2i-1}$, $h_2=\sum_{1\leq i<j\leq d}\frac{1}{(2i-1)(2j-1)}$, and the remaining coefficients all belong to $\Z_2$.

Lemma~\ref{lev2Harmonic} states that $v_2(h_1)=2v_2(d)$, so we have $v_2(4dh_1)=3v_2(d)+2$, as desired. Since $v_2((4d)^3)>3v_2(d)+2$, it remains to prove $v_2(h_2)>v_2(d)-2$, so that $v_2((4d)^2h_2)>3v_2(d)+2$. This claim is obvious if $d$ is odd. If $d$ is even, then we have the identity
\begin{align*}
h_2&=\sum_{i=1}^{d/2}\frac{1}{(2i-1)(2(d+1-i)-1)}+
\underset{ i+j\neq d+1}{\sum_{1\leq i<j\leq d}}\frac{1}{(2i-1)(2j-1)}\\
&=\frac{1}{2d}h_1+\sum_{1\leq i<j\leq d/2}\frac{4d^2}{(2i-1)(2j-1)(2d-2i+1)(2d-2j+1)}.
\end{align*}
Therefore we have 
$$v_2(h_2)\geq\min(-1-v_2(d)+v_2(h_1),2+2v_2(d))=v_2(d)-1,$$ 
as claimed.

\end{proof}

\begin{lemma}\label{leQn1Parity}
Let $n$ be a positive integer, and $Q_n(q)$ be as defined in Lemma~\ref{leExpansionOfPn}. Then $Q_n(1)$ is odd if and only if $n$ is square-free.
\end{lemma}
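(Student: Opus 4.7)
The plan is to evaluate the Möbius-inversion identity underlying Lemma~\ref{leExpansionOfPn} at $q=1$ and then analyze the $2$-adic valuation of the resulting sum separately according to the parity structure of~$n$. Setting $q\to 1$ in Lemma~\ref{leExpansionOfPn} (where $\frac{q^n-1}{q^{n/d}-1}\to d$) and using $P_n(1)=\binom{2n-1}{n-1}$ (the $d=1$ case of Theorem~\ref{thLogOfGeneratingFunction}), one obtains $\binom{2n-1}{n-1}=\sum_{d\mid n}(-1)^{n-d}d^2 Q_d(1)$, so that ordinary Möbius inversion gives
\[
n^2 Q_n(1)=\sum_{d\mid n}\mu(n/d)(-1)^{n-d}\binom{2d-1}{d-1}.
\]
The whole problem then reduces to showing that this sum has $2$-adic valuation exactly $2v_2(n)$ precisely when $n$ is square-free.

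For odd $n$, every contributing $d$ is odd, every sign $(-1)^{n-d}$ is $+1$, and Legendre's formula quickly yields $v_2\binom{2d-1}{d-1}=s_2(d)-1$ for odd $d$, so that only the $d=1$ summand is odd modulo~$2$. Hence $n^2 Q_n(1)\equiv \mu(n)\pmod 2$, which immediately settles the odd case.

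For even $n=2^k m$ with $k\geq 1$ and $m$ odd, I would pair each admissible divisor $d=2^{k-1}e$ with $d=2^k e$, since these are the only $d$'s for which $\mu(n/d)\neq 0$. Writing $D=2^{k-1}e$ and tracking the signs $\mu(n/d)(-1)^{n-d}$ carefully through the two subcases $k=1$ (where $D=e$ is odd) and $k\geq 2$ (where $D$ is even), each pair is shown to contribute exactly
\[
\mu(m/e)\left[\binom{4D-1}{2D-1}-(-1)^{D}\binom{2D-1}{D-1}\right].
\]
Lemma~\ref{lev2Binomial} then gives the $2$-adic valuation of this bracket as $2+2v_2(D)+s_2(D-1)=3k-1+s_2((e-1)/2)$, using the identity $s_2(2^{k-1}e-1)=(k-1)+s_2((e-1)/2)$ valid for $e$ odd (an easy binary-expansion argument).

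The minimum valuation $3k-1$ is attained uniquely at $e=1$, and the $e=1$ term is present only when $\mu(m)\neq 0$, i.e.\ when $m$ is square-free. Thus if $m$ is square-free the $e=1$ term dominates strictly and yields $v_2(n^2 Q_n(1))=3k-1$, hence $v_2(Q_n(1))=k-1$; if $m$ is not square-free the $e=1$ term drops and all remaining terms have valuation $\geq 3k$, so $v_2(Q_n(1))\geq k\geq 1$. In either case $Q_n(1)$ is odd iff $k\leq 1$ and $m$ is square-free, i.e.\ iff $n$ is square-free. The main obstacle I anticipate is the sign bookkeeping in the pairing step, because the factor $(-1)^{n-d}$ flips on odd divisors and thus behaves differently in the $k=1$ subcase than in the $k\geq 2$ subcase; once both subcases are verified to line up with the exact bracket of Lemma~\ref{lev2Binomial}, the $2$-adic analysis is immediate.
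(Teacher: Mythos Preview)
Your proposal is correct and follows essentially the same route as the paper: the same M\"obius-inverted formula for $n^2Q_n(1)$, the same parity argument for odd $n$ via $v_2\binom{2d-1}{d-1}=s_2(d)-1$, and the same pairing $\{D,2D\}$ combined with Lemma~\ref{lev2Binomial} for even $n$. Your explicit reduction of $s_2(D-1)$ to $(k-1)+s_2((e-1)/2)$ and the resulting formula $v_2(Q_n(1))=k-1$ when $m$ is square-free are a bit more detailed than the paper's version, but the argument is the same (note also that $(-1)^n=1$ here, so your bracket coincides with the paper's up to that harmless sign).
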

\begin{proof}
By Theorem \ref{thLogOfGeneratingFunction}, we have $P_n(1)=\binom{2n-1}{n-1}$. In view of this fact, the definition of $Q_n$ implies the formula
\begin{equation}
Q_n(1)=\frac{1}{n^2}\sum_{d\mid n}(-1)^{n-d}\mu(n/d)\binom{2d-1}{d-1}.
\end{equation}
Here we also state the fact that $\binom{2n-1}{n-1}$ is odd if and only if $n$ is a power of $2$.

We now divide the proof into two parts according to the parity of $n$.
\begin{enumerate}
\item \emph{$n$ is odd.} In this case, we only need to look at the parity of the numerator of $Q_n(1)$, namely the sum $\sum_{d\mid n}(-1)^{n-d}\mu(n/d)\binom{2d-1}{d-1}$. 
Every term in this sum is even unless $d$ is a power of $2$ and $n/d$ is square-free. Since $n$ is odd, this can only happen when $d$=1 and $n$ is square-free.
\item \emph{$n$ is even.} Here we estimate the quantity 
\[
v_2\left(\sum_{d\mid n}(-1)^{n-d}\mu(n/d)\binom{2d-1}{d-1}\right),
\]
and compare it to $v_2(n^2)$.

The sum over $d$ can be partitioned into pairs $\{d,2d\}$ where $d$ ranges over all divisors of $n$ such that $n/2d$ is odd and square-free. For each pair, the contribution to the sum is 
\[
(-1)^n\mu(n/2d)\left(\binom{4d-1}{2d-1}-(-1)^d\binom{2d-1}{d-1}\right).
\]
Lemma \ref{lev2Binomial} implies that the $2$-divisibility of this expression is given by 
$$2+2v_2(d)+s_2(d-1)=v_2(n^2)+s_2(d-1),$$ 
by taking into account that $v_2(d)=v_2(n)-1$. Since $s_2(d-1)=0$ if and only if $d=1$, we conclude that $Q_n(1)$ is odd if and only if $d=1$ contributes to the sum, which is equivalent to the square-free property of $n$.
\end{enumerate}

\end{proof}

\begin{proof}[Proof of Theorem \ref{thDivLogFCase2}]
A specialization of \ref{eqHAsASum} with $p=2$ gives
\begin{align*}
(q-1)\left(h(q,z)-\frac12 h(q^2,z^2)\right)&=(q-1)\sum_{l=1}^{\infty}\frac{k(q^{2l-1},z^{2l-1})}{2l-1}\\
&=\sum_{l=1}^{\infty}\sum_{m=1}^{\infty}\frac{q-1}{q^{2l-1}-1}\frac{(-1)^{m-1}Q_m(q^{2l-1})}{(2l-1)q^{(2l-1)\binom{d}{2}}}z^{(2l-1)m},
\end{align*}
and therefore
\begin{equation}\label{eqH2CoeffSum}
(q-1)[z^n]\left(h(q,z)-\frac12 h(q^2,z^2)\right)=\sum_{(2l-1)m=n}\frac{q-1}{q^{2l-1}-1}\frac{(-1)^{m-1}Q_m(q^{2l-1})}{(2l-1)q^{(2l-1)\binom{d}{2}}}.
\end{equation}
We point out that the summand $$\frac{q-1}{q^{2l-1}-1}\frac{(-1)^{m-1}Q_m(q^{2l-1})}{(2l-1)q^{(2l-1)\binom{d}{2}}}$$ belongs to $\Z_2$, so we can refer to its parity. Indeed, the parity of this summand is equal to the parity of $Q_m(1)$, so using Lemma~\ref{leQn1Parity} we know it is odd if and only if $m$ is square-free.

Therefore, the number of odd summands in the sum \ref{eqH2CoeffSum} is equal to the number of factorizations of $n$ into an odd number $2l-1$ and a square-free number $m$.

Suppose that $n\geq3$. We prove that the number of such factorizations of $n$ is always even. Indeed, if $4\mid n$ then no such factorization is possible. Otherwise, there are $2^{\omega_2(n)}$ factorizations, where $\omega_2(n)$ is the number of distinct odd prime factors of $n$, and since $n\geq 3$ we always have $\omega_2(n)>0$.

Thus we have proved $(q-1)[z^n]\left(h(q,z)-\frac12 h(q^2,z^2)\right)$ is even whenever $n\geq3$, thereby establishing the theorem.
\end{proof}

\setcounter{equation}{0}
\setcounter{theorem}{0}
\global\def\thesection{\mbox{D}}

\section*{Appendix D: Dwork's Lemma and the proof of Theorem \ref{thDivLogFCaseD}}
The Dieudonn\'{e}--Dwork Lemma (see, for example, Chapter 14 of \cite{MR566952}), is a fundamental result about the relationship between the $p$-adic properties of a power series and its exponential. It says the following.
\begin{lemma}[{\cite[Chapter 14]{MR566952}}]\label{leDwork}
Let $f\in \Q_p[[z]]$ be a power series. Then $\exp f\in \Z_p[[z]]$ if and only if
\[
p f(z)-f(z^p)\in \Z_p[[z]].
\]
\end{lemma}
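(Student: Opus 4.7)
The plan is to reduce this additive criterion to an equivalent multiplicative form and then translate via the exponential/logarithm correspondence. Setting $F(z)=\exp f(z)$, one has $F\in 1+z\Q_p[[z]]$ and the identity $\log\bigl(F(z)^p/F(z^p)\bigr)=pf(z)-f(z^p)$ serves as the bridge between the hypothesis on $f$ and a multiplicative condition on $F$.

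I would first prove the auxiliary ``multiplicative'' form: for $F\in 1+z\Q_p[[z]]$, $F\in 1+z\Zp[[z]]$ if and only if $F(z)^p/F(z^p)\in 1+pz\Zp[[z]]$. The forward direction rests on the characteristic-$p$ freshman's dream applied coefficientwise: writing $F=1+\sum a_nz^n$ with $a_n\in\Zp$, Fermat's little theorem in $\Zp$ together with the $p$th-power map commuting with sums modulo $p$ gives $F(z)^p\equiv\sum a_n^pz^{np}\equiv F(z^p)\pmod p$; since $F(z^p)$ is a unit in $\Zp[[z]]$, division yields the claim. For the converse I would induct on $n$ to show $a_n\in\Zp$: extracting the $z^n$ coefficient from both sides of $F(z)^p=F(z^p)\cdot\bigl(F(z)^p/F(z^p)\bigr)$ produces $pa_n+P(a_1,\dots,a_{n-1})$ on the left, with $P\in\Z[x_1,\dots,x_{n-1}]$ coming from the multinomial expansion, and $a_{n/p}$ (or $0$ when $p\nmid n$) plus a term in $p\Zp$ on the right. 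Applying the forward direction to the $\Zp$-integral truncation $1+a_1z+\dots+a_{n-1}z^{n-1}$ shows $P(a_1,\dots,a_{n-1})\equiv a_{n/p}\pmod p$, which forces $pa_n\in p\Zp$ and hence $a_n\in\Zp$.

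Next, I would verify that $\exp$ and $\log$ restrict to mutually inverse bijections between $pz\Zp[[z]]$ and $1+pz\Zp[[z]]$. For $g\in pz\Zp[[z]]$, the estimate $v_p(g^n/n!)\ge n-v_p(n!)\ge 1$ for every $n\ge1$ shows $\exp g\in 1+pz\Zp[[z]]$; conversely for $u\in z\Zp[[z]]$, the estimate $v_p\bigl((pu)^n/n\bigr)\ge n-v_p(n)\ge 1$ shows $\log(1+pu)\in pz\Zp[[z]]$. Concatenating these equivalences with the multiplicative lemma yields
\[ \exp f\in\Zp[[z]]\iff F(z)^p/F(z^p)\in 1+pz\Zp[[z]]\iff pf(z)-f(z^p)\in pz\Zp[[z]], \]
and the right-hand containment in particular implies the $\Zp[[z]]$ containment stated in the lemma (indeed this sharper form is what is needed in the sequel).

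The main obstacle is the induction step in the multiplicative lemma, where one must identify the ``non-$a_n$'' contributions to $[z^n]F(z)^p$ explicitly and recognise that they coincide modulo $p$ with $[z^n]F(z^p)$. This identification is simply the freshman's dream applied to a truncation of $F$ rather than to $F$ itself, and it is exactly the observation that closes the induction; everything else reduces to elementary $p$-adic estimates on coefficients of $\exp$ and $\log$.
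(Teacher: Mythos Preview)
The paper does not give its own proof of this lemma; it simply cites \cite[Chapter~14]{MR566952}. Your proposal is the standard argument via the multiplicative form of Dwork's criterion, and it is correct.

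One remark is in order. You in fact establish the equivalence $\exp f\in\Zp[[z]]\iff pf(z)-f(z^p)\in p\,\Zp[[z]]$, which is sharper than what the paper states. The paper's version, with $\Zp[[z]]$ in place of $p\,\Zp[[z]]$, is actually false in the ``if'' direction: for $f(z)=\sum_{k\ge0}z^{p^k}\!/p^{k+1}$ one has $pf(z)-f(z^p)=z\in\Zp[[z]]$, yet $[z^1]\exp f=1/p\notin\Zp$. Your closing sentence phrases your result as merely implying the paper's statement, but it is your version that is the genuine two-sided criterion. This discrepancy does not affect the paper, since the lemma is invoked only to exhibit $\{f:\exp f\in\Zp[[z]]\}$ as the preimage of a $\Zp$-module under the $\Zp$-linear map $f\mapsto pf(z)-f(z^p)$, and $p\,\Zp[[z]]$ serves that purpose just as well as $\Zp[[z]]$.

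Your handling of the induction step in the multiplicative lemma---applying the already-proved forward direction to the truncation $1+a_1z+\dots+a_{n-1}z^{n-1}$ so as to identify the integer polynomial $P(a_1,\dots,a_{n-1})$ with $a_{n/p}$ modulo~$p$---is exactly the right device and closes the argument cleanly.
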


From this lemma we conclude the following fact.
\begin{lemma}\label{leZpModule}
The set
\[
\{f\in \Q_p[[z]]\mid \exp f\in \Z_p[[z]]\}
\]
is a $\Z_p$-module.
\end{lemma}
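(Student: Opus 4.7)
The plan is to reduce the statement directly to Dwork's Lemma (Lemma~\ref{leDwork}), which replaces the multiplicative, nonlinear condition ``$\exp f\in\Z_p[[z]]$'' by a manifestly $\Z_p$-linear condition.

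First I would introduce the operator
\[
T\colon \Q_p[[z]]\to\Q_p[[z]],\qquad T(f)(z)=p\,f(z)-f(z^p).
\]
The map $T$ is $\Z_p$-linear (in fact $\Q_p$-linear), since both $f\mapsto pf$ and $f\mapsto f(z^p)$ are $\Z_p$-linear endomorphisms of $\Q_p[[z]]$. By Lemma~\ref{leDwork}, the set
\[
S=\{f\in\Q_p[[z]]\mid \exp f\in\Z_p[[z]]\}
\]
is exactly the preimage $T^{-1}\bigl(\Z_p[[z]]\bigr)$.

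Now $\Z_p[[z]]$ is a $\Z_p$-submodule of $\Q_p[[z]]$, and the preimage of a submodule under a $\Z_p$-linear map is again a submodule. Concretely: if $f,g\in S$ and $\alpha,\beta\in\Z_p$, then
\[
T(\alpha f+\beta g)=\alpha T(f)+\beta T(g)\in\Z_p[[z]],
\]
so $\alpha f+\beta g\in S$, and $0\in S$ trivially (since $\exp 0=1\in\Z_p[[z]]$). This proves $S$ is a $\Z_p$-module.

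There is no real obstacle here: the entire content of the lemma is that Dwork's criterion linearizes the problem, after which $\Z_p$-linearity of $T$ and of membership in $\Z_p[[z]]$ do all the work.
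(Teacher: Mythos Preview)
Your proof is correct and follows essentially the same approach as the paper: both identify the set in question as the preimage of $\Z_p[[z]]$ under the $\Z_p$-linear map $f\mapsto p f(z)-f(z^p)$ via Dwork's Lemma, and conclude that it is a $\Z_p$-submodule. Your version is simply a more explicit rendering of the same one-line argument.
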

\begin{proof}
Lemma \ref{leDwork} implies that the set $\{f\in \Q_p[[z]]\mid \exp f\in \Z_p[[z]]\}$ is the preimage of the $\Z_p$-module $\Z_p[[z]]$ under the $\Z_p$-linear map $f\mapsto pf(z)-f(z^p)$, and therefore is also a $\Z_p$-module.
\end{proof}

\begin{lemma}\label{leDivOfFCaseInfty}
Let $F$ be defined as in \eqref{eqGenFunDef}. Then, for any prime $p$ and any prime power $q$, we have $\lim_{k\to\infty}F(C_{p^k},q,z)\in \Zp[[z]]$.
\end{lemma}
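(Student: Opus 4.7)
My plan is to deduce the lemma from Frobenius's classical theorem (Theorem \ref{thFrobenius}), applied to $H = GL_n(\Fq)$ with divisor $p^{v_p(|H|)}$. The heart of the argument is that Frobenius forces the number of $p$-elements in $GL_n(\Fq)$ to carry at least as much $p$-divisibility as $|GL_n(\Fq)|$ itself, which is precisely what is needed to cancel the denominator of the generating function.

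First I will justify that $F^\infty(z):=\lim_{k\to\infty}F(C_{p^k},q,z)$ is well-defined coefficient-wise. For each fixed $n$, the quantity $\#\hom(C_{p^k},GL_n(\Fq))$ counts matrices $A\in GL_n(\Fq)$ with $A^{p^k}=I$; the order of any such $A$ divides both $p^k$ and $|GL_n(\Fq)|$, hence divides $p^{\min(k,\,v_p(|GL_n(\Fq)|))}$. Once $k\ge v_p(|GL_n(\Fq)|)$ the count therefore stabilizes at the total number $N_n$ of $p$-elements of $GL_n(\Fq)$, so $[z^n]F^\infty=\frac{(-1)^n N_n}{q^{\binom{n}{2}}(q;q)_n}$.

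Next I will apply Theorem \ref{thFrobenius} to $H=GL_n(\Fq)$ with the divisor $p^{v_p(|GL_n(\Fq)|)}$, which trivially divides $|H|$. Frobenius gives $p^{v_p(|GL_n(\Fq)|)}\mid N_n$, i.e.\ $v_p(N_n)\ge v_p(|GL_n(\Fq)|)$. Since $p\nmid q$, a direct computation yields $v_p(|GL_n(\Fq)|)=v_p(q^{\binom{n}{2}}(q;q)_n)=v_p((q;q)_n)$, so
\[
v_p\bigl([z^n]F^\infty\bigr)=v_p(N_n)-v_p\bigl((q;q)_n\bigr)\ge 0,
\]
establishing $F^\infty\in\Zp[[z]]$ as claimed.

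There is essentially no obstacle here: once the stabilization is noted, the argument is a one-line bookkeeping of $p$-adic valuations. An alternative, self-contained route via the Dieudonn\'e--Dwork lemma (Lemma \ref{leDwork}) applied to the explicit expression \ref{eqFormulaOfLogFInfty} seems possible but noticeably more delicate, since the individual summands $\frac{1}{d}p^{\lambda_i-i}(h(q^{dp^i},z^{dp^i})-\frac{1}{p}h(q^{dp^{i+1}},z^{dp^{i+1}}))$ carry $p$-adic denominators of unbounded exponent $-i$, so Theorem \ref{thDivLogFCase1} only controls them up to a multiplicative factor of $p^{\lambda_i}$, and one would need a careful telescoping across $i$'s to verify Dwork's condition directly.
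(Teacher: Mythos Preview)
Your proof is correct and follows exactly the paper's approach: apply Frobenius's theorem to $GL_n(\Fq)$ to conclude that the number $N_n$ of $p$-elements is divisible by $p^{v_p(|GL_n(\Fq)|)}$, which cancels the denominator of $[z^n]F$. One small remark: the hypothesis $p\nmid q$ in your last step is unnecessary, since $q^{\binom{n}{2}}(q;q)_n=(-1)^n|GL_n(\Fq)|$ holds identically, so $v_p\bigl([z^n]F^\infty\bigr)=v_p(N_n)-v_p(|GL_n(\Fq)|)\ge 0$ for every prime power $q$, matching the lemma's generality.
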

\begin{proof}
The quantity $|GL_n(\Fq)|[z^n]F(C_{p^k},q,z)$ is the number of homomorphisms from $C_{p^k}$ to $GL_n(\Fq)$, and is equivalently the number of elements with order dividing $p^k$ in the group. Letting $k\to\infty$, we obtain the number of elements whose order is equal to a power of $p$. According to the Frobenius Theorem, the $p$-divisibility of this number is at least the $p$-divisibility of $|GL_n(\Fq)|$, which establishes the claim.
\end{proof}

Using Lemmas \ref{leZpModule} and \ref{leDivOfFCaseInfty}, we are able to prove Theorem \ref{thDivLogFCaseD}.
\begin{proof}[Proof of Theorem \ref{thDivLogFCaseD}]
Theorem \ref{thFormulaOfLogF} implies the identity
\begin{equation}\label{eqFIdentityCaseD}
\log F(G,q;z)-\frac1d\log F(G,q^d;z^d)=h(q,z)-\frac1d h(q^d,z^d).
\end{equation}
holds for every Abelian $p$-group $G$. We take $G=C_{p^k}$ and let $k\to+\infty$ in this identity. Lemmas~\ref{leZpModule} and \ref{leDivOfFCaseInfty} imply that the left-hand side of \ref{eqFIdentityCaseD} is a linear combination of elements in the $\Z_p$-module $\{f\in \Q_p[[z]]\mid \exp f\in \Z_p[[z]]\}$, with coefficients in $\Z_p$. Therefore, the right-hand side of \ref{eqFIdentityCaseD} also belongs to this module, which concludes the proof.
\end{proof}

\bibliographystyle{siam}
\bibliography{hom_q_div}
\end{document}